\newcommand{\cT}{\mathcal{T}}
\newcommand{\cR}{\mathcal{R}}
\newcommand{\la}{\langle}
\newcommand{\ra}{\rangle}
\newcommand{\Gl}{\mathrm{Gl}}
\newcommand{\R}{\mathbb{R}}
\newcommand{\ZZ}{\mathbb{Z}}
\newcommand{\ups}{\upsilon}
\newcommand{\e}{\varepsilon}
\newcommand{\G}{\Gamma}
\newcommand{\x}{\times}
\newcommand{\UU}{\mathrm{U}}
\newcommand{\Fix}{\operatorname{Fix}}
\newcommand{\Sp}{\mathrm{Sp}}
\newcommand{\HH}{\mathbb{H}}
\newcommand{\CC}{\mathbb{C}}
\newcommand{\RR}{\mathbb{R}}
\newcommand{\PP}{\mathbb{P}}
\newcommand{\Z}{\mathbb{Z}}
\newcommand{\p}[1]{\mathbb{P}\left(#1\right)}
\newcommand{\CP}{\mathbb{CP}}
\newcommand{\Spin}{\mathrm{Spin}}
\newcommand{\SU}{\mathrm{SU}}
\newcommand{\SO}{\mathrm{SO}}
\newcommand{\Gtwo}{\mathrm{G}_2}
\newcommand{\jota}{\mathfrak{j}}
\newcommand{\inc}{\hookrightarrow}
\newcommand{\rId}{\mathrm{Id}}
\newcommand{\rJ}{\mathrm{J}}
\newcommand{\rr}{\mathrm{r}}
\newcommand{\frh}{\mathfrak{h}}
\newcommand{\frg}{\mathfrak{g}}
\DeclareMathOperator{\coker}{Coker}
\DeclareMathOperator{\Diff}{Diff}
\newcommand{\rb}{\mathrm{b}}
\newcommand{\rqr}{\mathrm{q}}
\newcommand{\bx}{\mathbf{x}}
\newcommand{\ssf}{\mathsf{f}}
\newcommand{\sg}{\mathbf{g}}
\newcommand{\ui}{\underline{\i}}
\newcommand{\uk}{\underline{\k}}
\renewcommand{\a}{\alpha}
\renewcommand{\b}{\beta}
\renewcommand{\d}{\delta}
\newcommand{\g}{\gamma}
\renewcommand{\e}{\varepsilon}
\renewcommand{\l}{\lambda}
\renewcommand{\k}{\kappa}
\newcommand{\m}{\mu}
\newcommand{\n}{\nu}
\renewcommand{\o}{\omega}
\renewcommand{\p}{\phi}
\newcommand{\vp}{\varphi}
\newcommand{\s}{\sigma}
\renewcommand{\t}{\tau}
\renewcommand{\G}{\Gamma}
\renewcommand{\O}{\Omega}
\renewcommand{\S}{\Sigma}
\renewcommand{\L}{\Lambda}
\renewcommand{\P}{\Phi}
\newcommand{\h}{\theta}
\newcommand{\rT}{\mathrm{T}}
\theoremstyle{plain}
\newtheorem{proposition}{Proposition}
\newtheorem{theorem}[proposition]{Theorem}
\newtheorem{lemma}[proposition]{Lemma}
\newtheorem{corollary}[proposition]{Corollary}
\theoremstyle{definition}
\newtheorem{definition}[proposition]{Definition}
\newtheorem{remark}[proposition]{Remark}
\renewcommand{\Re}{\mathfrak{Re}\,}
\title[A compact non-formal closed $\Gtwo$ manifold with $\lowercase{b}_1=1$]{A compact non-formal closed $\Gtwo$ manifold with $b_1=1$}
\author[L. Mart\' in-Merch\' an]{Luc\'ia  Mart\' in-Merch\' an}
\address{Facultad de Ciencias, Universidad
de Málaga, Bulevar Louis Pasteur, 31, 29010, Málaga, Spain}
\email{lmmerchan@uma.es}
\subjclass[2000]{Primary 53C38, 53C15; Secondary 17B30, 22E25}
\keywords{$\Gtwo$ orbifold resolution, formality}
\begin{document}

\begin{abstract}
We construct a compact manifold with a closed $\Gtwo$ structure
not admitting any torsion-free $\Gtwo$ structure, which is
non-formal and has first Betti number $b_1=1$.
We develop a method of resolution for orbifolds
that arise as a quotient $M/\ZZ_2$ with $M$
a closed $\Gtwo$ manifold 
under the  assumption that
the singular locus carries a nowhere-vanishing closed $1$-form. 
\end{abstract}

\maketitle

\section{Introduction}\label{sec:intro}

A $\Gtwo$ structure on a $7$-dimensional manifold $M$ is a reduction
of the structure group of its frame bundle to the exceptional Lie group $\Gtwo$.
Such a structure determines an orientation, a metric $g$ and a non-degenerate $3$-form $\vp$; 
these define a cross product $\x$ on $TM$ by means of the expression
$$
\vp(X,Y,Z)=g(X \x Y,Z).
$$

The group $\Gtwo$ appears on Berger's list \cite{BE} of possible holonomy groups of simply connected, irreducible and non-symmetric Riemannian manifolds.  Non-complete metrics with holonomy $\Gtwo$ were given by Bryant in \cite{Br87} and complete metrics were obtained by Bryant and Salamon in \cite{BrS89}. First compact examples were constructed in 1996 by Joyce in \cite{J1} and \cite{J2}. More compact manifolds with holonomy $\Gtwo$ were constructed later by Kovalev \cite{Kovalev}, Kovalev and Lee \cite{KovalevLee}, Corti, Haskins, Nordstr\"om and Pacini \cite{CHNP} and recently by Joyce and Karigiannis \cite{JK}. 

The torsion of a $\Gtwo$ structure $(M,\vp,g)$ is defined as $\nabla \vp$, the covariant
derivative of $\vp$. Fern\' andez and Gray \cite{FG} classified $\Gtwo$ structures into $16$
different types  according to
equations involving the torsion of the structure. In this paper we focus on two of them, namely {\em torsion-free} and {\em closed} $\Gtwo$
structures. A $\Gtwo$ structure is called torsion-free if the holonomy of $g$ is contained in $\Gtwo$, that is $\nabla \vp=0$ or equivalently $d\vp=0$ and $d\star \vp=0$, where $\star$ denotes the Hodge star. A $\Gtwo$ structure is said to be closed if it verifies $d\vp=0$; these are also named {\em calibrated}.
Metrics defined by such types of $\Gtwo$ structures have interesting properties; while torsion-free $\Gtwo$ manifolds are Ricci-flat, closed $\Gtwo$
manifolds have negative scalar curvature and both the scalar-flatness and the Einstein condition
are equivalent to the fact that the structure is torsion-free (see \cite{Br06} and \cite{CI}).

This paper contributes to understanding topological properties of compact manifolds
 with a closed $\Gtwo$ structure that cannot
be endowed with a torsion-free $\Gtwo$
structure.
First examples of these were provided
by Fern\' andez in \cite{F} and \cite{F-2};
the example in \cite{F} is a nilmanifold and the examples in \cite{F-2} are solvamifolds.
Nilmanifols and solvmanifolds arise as
compact quotients of Lie groups
by lattices; these Lie groups are nilpotent in the first case and solvable in the
second. In both examples the $\Gtwo$ structure
is induced by a closed left-invariant $\Gtwo$ form
on the Lie group. The solvmanifolds in \cite{F-2} 
 have $b_1=3$.
In \cite{CF} the authors classify nilpotent Lie algrebras that admit a closed
$\Gtwo$ structure; this list provides more examples of compact manifolds with $b_1 \geq 2$
endowed with a closed $\Gtwo$ structure but not admitting torsion-free $\Gtwo$ structures.
In \cite{VM} the author develops a method that allows to construct $7$-dimensional
solvable Lie groups endowed with a closed
$\Gtwo$ structure and as an application provided an example with $b_1=1$.
Recently in \cite{FFKM} the autors construct another example that
has $b_1 = 1$. Their
starting point is a nilmanifold $M$ with $b_1=3$ that
admits a closed $\Gtwo$ structure and an involution that preserves it. The quotient $X=M/\ZZ_2$ is an
orbifold with $b_1=1$ and its isotropy
locus consists of $16$ disjoint tori. Then they resolve the singularities to obtain a smooth manifold.

Being this the geography of such manifolds, 
this paper provides an example
of 
a compact manifold carrying a closed $\Gtwo$
structure. Its topological properties
are different from those that the already mentioned ones have, as we shall discuss later.
Our construction consists in resolving an orbifold; for that purpose
we first develop a resolution method that is summarized in the following
result:

\begin{theorem} \label{theo:resol-exist}
Let $(M,\vp,g)$ be a closed $\Gtwo$ structure on a compact manifold. Suppose that $\j \colon M \to M$ is an involution such that $\j^*\vp=\vp$ and consider the orbifold $X=M/\j$.
Let $L=\Fix(\j)$ be the singular locus of $X$ and suppose that there is a nowhere-vanishing closed $1$-form $\h \in \O^1(L)$. Then, there exists a compact
$\Gtwo$ manifold endowed with a closed $\Gtwo$ structure $(\widetilde X, \widetilde \vp, \widetilde g)$ and a map $\rho \colon \widetilde X \to X$ such that: 
\begin{enumerate}
\item The map $\rho  \colon \widetilde X- \rho^{-1}(L) \to X-L$ is a diffeomorphism.
\item There exists a small neighbourhood $U$ of $L$ such that $\rho^*(\vp)=\widetilde \vp$ on $\widetilde X- \rho^{-1}(U)$. 
\end{enumerate}
\end{theorem}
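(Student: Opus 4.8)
\emph{Plan of proof.}

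\textbf{The singular locus.} Since $\j^*\vp=\vp$ forces $\j^*g=g$, the map $\j$ is an isometric involution, so $L=\Fix(\j)$ is a closed, totally geodesic submanifold; at each $p\in L$ the differential $d\j_p$ is an involution in $\Gtwo\subset\SO(7)$. All such involutions are conjugate, with a $3$-dimensional $(+1)$-eigenspace, on which the standard $\Gtwo$ form restricts to a volume form, and a $4$-dimensional $(-1)$-eigenspace. Hence $L$ is a closed \emph{associative} $3$-manifold, its normal bundle $\nu\to L$ has rank $4$ and carries the fibrewise $\SU(2)$-structure induced by $\vp$, and $d\j$ acts on $\nu$ by $-\id$. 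By the $\j$-equivariant tubular neighbourhood theorem a neighbourhood of $L$ in $M$ is identified with a disc bundle $D(\nu)$ on which $\j$ is the fibrewise antipodal map, so near $L$ the orbifold $X$ is the bundle of quotients $\mathbb{R}^4/\{\pm\id\}=\mathbb{C}^2/\ZZ_2$ over $L$.

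\textbf{The role of $\h$ and a local model.} The closed $1$-form $\h$ and the $\SU(2)$-structure on $\nu$ together single out a fibrewise symplectic form $\omega$ on $\nu$: since $\vp$ restricted to the fibres is built from the $\SU(2)$-triple $(\omega_1,\omega_2,\omega_3)$ and $\h$ is nowhere-vanishing, the member $\omega$ of this triple associated to $\h$ is non-degenerate on each fibre, and together with the fibre metric it determines a complex structure $J$ on $\nu$. Analysing the $1$-jet of $\vp$ along the associative $L$ and using $d\vp=0$, I would show that, after deforming $\vp$ through closed $\Gtwo$ structures in a neighbourhood $U$ of $L$ (this $U$ will be the one in the statement), $\vp$ coincides on a smaller tube with the explicit closed model
$$
\vp_{\mathrm{mod}}\;=\;\pi^*(\vp|_L)\;+\;\pi^*\h\wedge\omega\;+\;\Psi ,
$$
where $\pi\colon D(\nu)\to L$ is the projection and $\Psi$ collects the remaining, holomorphic-symplectic, terms of the $\SU(2)$-triple; the point is that $\Psi$ is $\ZZ_2$-invariant and globally defined on $D(\nu)$, and involves only the two fibre $2$-forms complementary to $\omega$ with respect to $J$.

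\textbf{The resolution.} Replace each fibre $\mathbb{C}^2/\ZZ_2$ by its minimal resolution $Y=T^*\CP^1$, performed with respect to $J$; this is canonical, hence globally defined over $L$, and produces a compact manifold $\widetilde X$ together with a map $\rho\colon\widetilde X\to X$ that is a diffeomorphism away from the exceptional locus $\rho^{-1}(L)$, a $\CP^1$-bundle over $L$, so (1) holds. On the resolved fibres the canonical holomorphic symplectic form of $T^*\CP^1$ extends the flat one with no modification, so $\Psi$ pulls back to a smooth form on $\widetilde X$; only $\omega$ needs attention, since its pullback degenerates along the exceptional curve. Choose, for a small $t>0$, a closed $2$-form $\omega^t$ on the resolved fibres that equals $\omega$ outside a neighbourhood of the exceptional curve, is non-degenerate there, and has fibre cohomology class $t$ times that of the exceptional curve, and set
$$
\widetilde\vp\;=\;\pi^*(\vp|_L)\;+\;\pi^*\h\wedge\omega^t\;+\;\rho^*\Psi
$$
over the resolved tube, and $\widetilde\vp=\rho^*\vp$ elsewhere. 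Then $\widetilde\vp=\rho^*\vp$ outside $\rho^{-1}(U)$, so (2) holds.

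\textbf{Closedness, positivity, and the main obstacle.} The form $\widetilde\vp$ is closed: outside the tube it equals $\rho^*\vp$, and on the tube $d\widetilde\vp=d(\pi^*\h\wedge\omega^t)=\pi^*d\h\wedge\omega^t-\pi^*\h\wedge d\omega^t=0$ precisely because $\h$ and $\omega^t$ are closed — this is where the closedness of $\h$ is indispensable. It is positive for $t$ small: away from the exceptional curve it is a perturbation of size $O(t)$ of the positive form $\rho^*\vp$, and near the exceptional curve, because $\h$ is nowhere-vanishing, $\widetilde\vp$ has exactly the shape in which $\pi^*\h\wedge\omega^t$ supplies the fibre direction lost by $\rho^*\omega$, so a pointwise linear-algebra check exhibits it as a positive $\Gtwo$ form there; since positivity is an open condition, $\widetilde\vp$ defines a closed $\Gtwo$ structure $(\widetilde X,\widetilde\vp,\widetilde g)$ for $t$ small enough. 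I expect the main work to lie in establishing the closed local normal form $\vp_{\mathrm{mod}}$ around the associative $L$ — which for a merely \emph{closed} (not torsion-free) $\Gtwo$ structure requires care — and in arranging the whole construction so as never to leave the class of closed $\Gtwo$ structures; the hypotheses that $\h$ be closed and nowhere-vanishing are exactly what make this possible.
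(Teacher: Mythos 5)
Your overall strategy is the same as the paper's (which in turn adapts Joyce--Karigiannis): identify the local model $\RR^3\times(\CC^2/\ZZ_2)$ along the associative fixed locus, use $\h$ to fix a fibrewise complex structure on the normal bundle $\nu$, blow up fibrewise, replace the corresponding K\"ahler form by a resolved one depending on a small parameter $t$, and glue. The gap lies in the central step, which you flag as ``the main work'' but then treat as routine. The model $\vp_{\mathrm{mod}}=\pi^*(\vp|_L)+\pi^*\h\wedge\o+\Psi$, with $\o$ and $\Psi$ obtained by extending the fibrewise $\SU(2)$-triple to the total space by means of a connection splitting, is in general \emph{not} closed: a fibrewise $2$-form transported to the total space of $\nu$ (or of the blown-up bundle $P$) through a horizontal distribution picks up exterior-derivative terms involving the curvature of the connection, and $\pi^*e_2\wedge\o_2-\pi^*e_3\wedge\o_3$ is likewise not closed since $(e_2,e_3)$ is only a coframe of $\la\h^\sharp\ra^\perp$, not closed $1$-forms. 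Consequently your key computation $d\widetilde\vp=d(\pi^*\h\wedge\o^t)=0$ ``because $\h$ and $\o^t$ are closed'' assumes precisely what has to be constructed, namely a globally defined \emph{closed} extension of the resolved fibrewise K\"ahler form. The paper achieves closedness by replacing the naive model by exact expressions: with $e_1=\|\h\|^{-1}\h$ (this is where closedness of $\h$ actually enters) it uses $\b=-\tfrac{1}{4}\pi^*\h\wedge d\big((\|\h\|^{-1}\circ\pi)\, I[dr^2]_{1,0}\big)+d(\pi^*e_2\wedge\mu_2-\pi^*e_3\wedge\mu_3)$ near the zero section and, on $P$, the potential $\ssf_{|\h|}(r)$ together with a connection satisfying $\nabla I=0$. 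These closed models agree with yours only in their vertical-vertical-horizontal component; they necessarily carry extra horizontal components (e.g.\ the curvature terms $[F_Q]_{1,1}$, $[F_Q]_{0,2}$ of the line bundle $P\to Q$), which your ansatz simply omits.

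Those extra components are bounded but not small, so your remaining claims also do not go through as stated: one cannot deform $\vp$ through closed $\Gtwo$ structures to make it coincide with such a model on a tube of \emph{fixed} size, and positivity near the exceptional curve is not a clean pointwise linear-algebra fact about $\pi^*\h\wedge\o^t$ supplying the missing fibre directions. In the paper the comparison is made only after rescaling by the dilations $F_t$: the forms $\widehat\p_2^t$, $\P_2^t$ and the interpolating forms are compared with $F_t^*\p_1$, resp.\ $\P_1^t$, in the $t$-weighted metrics $g_t=t^2g_{2,0}+g_{0,2}$ and $h_t$, where the horizontal error terms acquire a factor of $t$ (or $t^2$), and non-degeneracy is then deduced from the universal $C^0$-openness constant of Lemma \ref{universal}; the gluing radius is tied to $t$ by $st=\e/4$. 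Thus $t$ is not merely the size of the exceptional $\CP^1$'s, as in your proposal, but the mechanism that suppresses the curvature errors. Supplying the exact corrected models and these estimates (Propositions \ref{prop:interpolation-order-2}--\ref{prop:inter-resol} of the paper) is the substantive content missing from your argument.
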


The fixed point locus $L$ is an oriented $3$-dimensional manifold (see Lemma \ref{lem:3-dim}); the existence of a nowhere-vanishing closed $\h \in \O^1(L)$ is equivalent to the fact that
each connected component of $L$ is a mapping torus of an orientation-preserving diffeomorphism of an oriented surface. In our example,
the singular locus
is formed by $16$ disjoint nilmanifolds whose universal
covering is the Heisenberg group. 



 The resolution method follows the
ideas of Joyce and Karigiannis in \cite{JK}, where they develop a method to resolve $\ZZ_2$ singularities
induced by the action of an involution on manifolds endowed with a torsion-free $\Gtwo$  structure in the
case that the singular locus $L$ has a nowhere-vanishing harmonic $1$-form.
The local model of the singularity being $\RR^3 \x (\CC^2/\{\pm 1 \})$, the resolution is constructed by replacing a tubular neighbourhood
of the singular locus a with a bundle over $L$ with fibre the
Eguchi-Hanson space. Then they construct a $1$-parameter family of
closed
$\Gtwo$ structures on the resolution; these have small torsion when the value of the parameter
is small. Then they apply a theorem of Joyce \cite[Th. 11.6.1]{Joyce2} which states that
if one can find a closed $\Gtwo$ structure $\vp$
on a compact $7$-manifold $M$ whose torsion is sufficiently small in a certain
sense, then there exists a torsion-free $\Gtwo$ structure which is close to $\vp$
and it determines the same de Rham cohomology class. This method provides a torsion-free $\Gtwo$ structure on the resolution; if its fundamental group is finite then its holonomy
is $\Gtwo$.

The main difficulty of their construction relies on the fact that two of the three 
pieces that they glue, namely an annulus around the singular set of the orbifold 
and a germ of resolution, do not come naturally equipped with a torsion-free
$\Gtwo$ structure. However, there is a canonical way to define a $\Gtwo$ structure on them
and to obtain a closed $\Gtwo$ structure by making a small perturbation. The torsion of 
the structure is
too large so that they need to make additional corrections.
We shall follow the same ideas to perform the resolution; the method is simplified because
we avoid these technical difficulties.

In this paper we are interested in the interplay
between closed $\Gtwo$ manifolds with small first Betti number 
and the condition of being formal.
Formal manifolds are those whose 
rational cohomology algebra 
is described by its rational model.
 This is a notion of rational homotopy theory and
has been sucessfully applied in some geometric situations. 
The Thurston-Weinstein problem
is a remarkable example in the context of symplectic geometry; this consists
in constructing symplectic manifolds with no K\" ahler structure.
Deligne, Griffiths, Morgan and Sullivan proved in \cite{DGMS} that compact K\" ahler manifolds are formal; thus,
non-formal symplectic manifolds are solutions of this problem.
Formality is less understood in the case of exceptional holonomy; 
in particular,
the problem
of deciding whether or not manifolds with holonomy $\Gtwo$ and $\Spin(7)$ 
are formal is still open. There are some partial
results for holonomy $\Gtwo$ manifolds; in \cite{CN} authors proved 
that compact non-formal manifolds with holonomy $\Gtwo$ have
second Betti number $b_2 \geq 4$.
In addition, in \cite{CKT} authors proved that compact
manifolds with holonomy $\Gtwo$ are \textit{almost formal}; this condition implies
that triple Massey products $\la \xi_1,\xi_2,\xi_3 \ra$
are trivial except perhaps for the case
that the degree of $\xi_1$, $\xi_2$ and $\xi_3$ is $2$. 
 Non-trivial Massey products are obstructions to formality but there are examples
of non-formal compact $7$-manifolds that only have trivial triple Massey products (see \cite{CN}).
However, the presence of a geometric structure makes the situation different; for instance in \cite{MT} the authors prove that simply-connected $7$-dimensional Sasakian manifolds are formal if and only if its
triple Massey products are trivial.

Formal examples of closed $\Gtwo$ manifolds that do not admit any torsion-free $\Gtwo$ structure are the solvmanifolds provided in \cite{F-2} and \cite{VM}, and the compact manifold with $b_1=1$ provided in \cite{FFKM}. Non-formal examples are the nilmanifolds obtained in \cite{CF}; these have $b_1\geq 2$. 
In this paper we prove:
\begin{theorem}
There exists a compact non-formal closed $\Gtwo$ manifold with $b_1=1$ that cannot be endowed with a
torsion-free $\Gtwo$ structure.
\end{theorem}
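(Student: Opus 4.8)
The plan is to build the manifold by applying Theorem~\ref{theo:resol-exist} to a carefully chosen input datum $(M,\vp,g,\j)$, and then to detect non-formality via a nontrivial Massey product that survives on the resolution. For the input, I would take $M$ to be a compact $7$-manifold carrying a closed (non-torsion-free) $\Gtwo$ structure together with an involution $\j$ fixing $\vp$, chosen so that the fixed locus $L=\Fix(\j)$ consists of finitely many copies of a compact quotient of the Heisenberg group $H_3$ (a circle bundle over $T^2$) — this is the standard nilmanifold whose rational model famously carries a nontrivial triple Massey product of degree-$1$ classes. A natural candidate is the Fernández nilmanifold of \cite{F} (or one of the nilmanifolds in \cite{CF}), which has $b_1\geq 2$ and a closed $\Gtwo$ structure; one picks $\j$ to be a lattice-compatible involution so that $M/\j$ has $b_1=1$ and singular locus $L$ a disjoint union of Heisenberg nilmanifolds. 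Since $L$ is a nilmanifold fibering over $T^2$, it admits a nowhere-vanishing closed $1$-form $\h$ (pull back a volume-generating closed $1$-form from the base circle of the mapping-torus description, or use any nonzero element of $H^1(L;\RR)$ represented by an invariant form), so the hypotheses of Theorem~\ref{theo:resol-exist} are met and we obtain a compact $\Gtwo$ manifold $\widetilde X$ with a closed $\Gtwo$ structure $\widetilde\vp$, together with $\rho\colon\widetilde X\to X$.

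Next I would verify the two "negative" properties. First, $b_1(\widetilde X)=1$: away from a neighbourhood $U$ of $L$ the map $\rho$ is a diffeomorphism onto $X-L$, and since each Eguchi–Hanson fibre is simply connected and the local model replaces $\RR^3\x(\CC^2/\{\pm1\})$ by an $\RR^3$-bundle of Eguchi–Hanson spaces, a Mayer–Vietoris computation (as in \cite{JK} and \cite{FFKM}) shows $H^1(\widetilde X;\RR)\cong H^1(X;\RR)=H^1(M;\RR)^{\ZZ_2}$, which is $1$-dimensional by the choice of $\j$. Second, $\widetilde X$ admits no torsion-free $\Gtwo$ structure: since $b_1=1\neq0$ the fundamental group is infinite, so if it carried a torsion-free $\Gtwo$ structure the metric would be Ricci-flat but not have full holonomy $\Gtwo$; by the Cheeger–Gromoll splitting theorem a finite cover would split off a line, forcing the holonomy into $\SU(3)$, $\Sp(1)$ or a product — one then rules this out by a Betti-number / cohomology-ring obstruction inherited from $M$ (this is exactly how the analogous claims are handled in \cite{FFKM}; $b_1=1$ alone already excludes holonomy exactly $\Gtwo$, and the remaining reducible cases are excluded by the structure of $H^*(\widetilde X)$).

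The heart of the argument, and the step I expect to be the main obstacle, is proving \emph{non-formality} of $\widetilde X$. The idea is to exhibit a nontrivial triple Massey product $\la a_1,a_2,a_3\ra$ with $a_i\in H^1(\widetilde X;\RR)$ — wait, $b_1=1$ forbids three independent degree-$1$ classes, so instead I would produce a nontrivial Massey product $\la a,b,c\ra$ with $a\in H^1$ and $b,c\in H^2$, or more robustly use the following mechanism: the singular locus $L$ is a Heisenberg nilmanifold carrying a nontrivial triple Massey product $\la\alpha_1,\alpha_2,\alpha_3\ra\in H^2(L)$ of degree-$1$ classes (the classical Heisenberg example), and I want to "push this forward" to $\widetilde X$. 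Concretely, $X=M/\j$ retracts onto a neighbourhood of $L$ only near $L$; instead one uses that the de Rham model of $\widetilde X$, near $\rho^{-1}(L)$, is built from the de Rham model of $L$ together with the Eguchi–Hanson fibre cohomology, and the Massey product of the invariant $1$-forms $\pi^*\alpha_i$ (pulled back via the bundle projection $\rho^{-1}(U)\to L$) is computed by the same formula as on $L$ because the connecting forms can be chosen fibrewise-constant. The technical work is: (i) choosing representatives on all three glued pieces (the resolved neighbourhood, the annular transition region, and $X-U$) that agree on overlaps and realize the Massey product; (ii) showing the resulting cohomology class in $H^*(\widetilde X)$ is nonzero and not killed by the indeterminacy, using that $H^*(\widetilde X)\to H^*(\rho^{-1}(U))$ together with the known cohomology of a punctured neighbourhood detects it; and (iii) checking the corresponding classes $a_i$ are themselves nonzero in $H^*(\widetilde X)$ after the surgery. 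Granting these, a nontrivial Massey product obstructs formality, so $\widetilde X$ is non-formal, completing the proof. The delicate point throughout is bookkeeping the cohomology under the gluing in Theorem~\ref{theo:resol-exist} — one must ensure the Mayer–Vietoris sequences behave as in the flat local model, which is exactly the kind of computation carried out in \cite{JK} and \cite{FFKM} and which I would adapt here.
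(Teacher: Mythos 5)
You have the right overall strategy---resolve a $\ZZ_2$-quotient of a \cite{CF}-type nilmanifold and detect non-formality by a Massey product---but two key steps of your plan fail or are left unsubstantiated, and they are precisely where the paper's work lies. First, your main mechanism for non-formality, transporting the classical degree-$(1,1,1)$ Massey product of the Heisenberg nilmanifold $L$ to $\widetilde X$, cannot work as stated: there is no map $\widetilde X \to L$, the $1$-forms pulled back to $\rho^{-1}(U)$ do not extend to closed forms representing nonzero classes on $\widetilde X$, and indeed $b_1(\widetilde X)=1$, so the required degree-one classes are simply unavailable on $\widetilde X$. In the cohomology of the resolution (Proposition \ref{prop:cohom-alg}) the classes of $H^*(L_i)$ enter only through the twisted summand $H^{*-2}(L_i)\otimes\la \bx_i\ra$, i.e.\ multiplied by the Thom class $\t_i$, so a degree-one class on $L_i$ only produces a degree-three class on $\widetilde X$. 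The paper instead uses a Massey product of degrees $(1,2,1)$, namely $\la [e^3],[e^{15}-e^{26}],[e^3]\ra$, living on the invariant Chevalley--Eilenberg model of the ambient nilmanifold $M$, hence on $X$ itself (Proposition \ref{prop:X-nf}), and then lifts it to $\widetilde X$ using the explicit ring structure of $H^*(\widetilde X)$ to control the indeterminacy contributed by the exceptional classes $\t_i$ (Proposition \ref{prop:resol-nf}). Your parenthetical alternative ($a\in H^1$, $b,c\in H^2$) points in the right direction, but you give neither a construction of such a product nor an argument that it survives the resolution; this is exactly the content of the explicit choice of Lie algebra (Gong's family $147E1$ with $\l=2$), lattice and involution, the determination of the $16$ fixed components, and the computation of $H^*(X)=H^*(M)^{\ZZ_2}$ and of the exact $3$-forms, none of which your proposal supplies.

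Second, your exclusion of torsion-free $\Gtwo$ structures is incomplete: after reducing (via Ricci-flatness, $b_1=1$ and Bochner/splitting) to a finite cover of the form $N\x S^1$, you propose to rule out the reducible holonomy cases ``by a Betti-number/cohomology-ring obstruction inherited from $M$'', but no such obstruction is exhibited, and none is evident---nothing in $H^*(\widetilde X)$ alone forbids a finite cover by the product of a simply connected $6$-manifold with a circle. The paper closes this case differently and more simply: $N$ is a compact simply connected $6$-manifold, hence formal \cite{FM}, so $N\x S^1$ is formal, and by Lemma \ref{lem:formal-quotient} the finite (regular, since $\pi_1(\widetilde X)$ is abelian) quotient $\widetilde X$ would be formal, contradicting the non-formality already established. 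Thus non-formality does double duty in the paper's argument, and your outline misses this logical link between the two properties you treat as independent.
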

The manifold $\widetilde X$ that we construct is the resolution of a closed $\Gtwo$ orbifold $X$, obtained as the quotient of a nilmanifold $M$ by the action of the group $\ZZ_2$. The orbifold has $b_1=1$ and a non-trivial Massey product coming from $M$. The resolution process does not change the first Betti number; in addition the non-trivial Massey product on $X$ lifts to a non-trivial Massey product on $\widetilde X$.

This paper is organized as follows. In section \ref{sec:pre} we review some necessary preliminaries on orbifolds, $\Gtwo$ structures and formality. Section \ref{sec:resolu} is devoted to prove Theorem \ref{theo:resol-exist}, and in section \ref{sec:topo} we characterise the cohomology ring of the resolution.
With these tools at hand we finally construct in section \ref{sec:const} the non-formal compact closed $\Gtwo$ manifold with $b_1=1$.

\noindent\textbf{Acknowledgements.} I am grateful to my thesis advisors Giovanni Bazzoni and Vicente Mu\~noz for suggesting this this problem to me and for useful conversations.
I acknowledge financial support by a FPU Grant (FPU16/03475).

\section{Preliminaries} \label{sec:pre}

\subsection{Orbifolds}
We first introduce some aspects about orbifolds, which can be found in \cite{CFM} and \cite{MR}.
\begin{definition}\label{def:orbifold}
An $n$-dimensional orbifold is a Hausdorff and second countable space $X$ 
endowed with an atlas $\{(U_{\a},V_\a, \psi_{\a},\G_{\a})\}$, 
where $\{V_\a\}$ is an open cover of $X$,
$U_\a \subset\RR^n$, $\G_{\a} < \Diff(U_\a)$ is a finite group acting by diffeomorphisms, and 
$\psi_{\a}\colon U_{\a} \to V_{\a} \subset X$ is a $\G_{\a}$-invariant map which induces a 
homeomorphism $U_{\a}/\G_\a \cong V_{\a}$. 

There is a condition of compatibility of charts for intersections.
For each point $x \in V_{\a} \cap V_{\b}$ there is some $V_\d \subset V_{\a} \cap V_{\b}$ 
with $x \in V_\d$ so that there are group monomorphisms $\rho_{\d \a}\colon \G_\d \inc \G_\a$,
$\rho_{\d \b}\colon \G_\d \inc \G_\b$, and open differentiable embeddings $\imath_{\d \a}\colon U_\d \to U_\a$, 
$\imath_{\d \b}\colon U_\d \to U_\b$, which satisfy 
$\imath_{\d \a}(\g(x))=\rho_{\d \a}(\g)(\imath_{\d \a}(x))$ and 
$\imath_{\d \b}(\g(x)) = \rho_{\d \b}(\g)(\imath_{\d \b}(x))$, for all $\g\in \G_\d$. 
\end{definition}

We can refine the atlas of an orbifold $X$ in order to obtain better properties; given a point $x \in X$, there is a chart $(U,V,\psi,\G)$ with $U \subset \RR^n$, $U/\G \cong V$,
so that the preimage $\psi^{-1}(\{x\})= \{u\}$,
and verifies $\g(u)=u$ for all $\g \in \G$. We call $\G$ the \emph{isotropy group} at $x$, 
and we denote it by $\G_x$. This group is well defined up to conjugation by a diffeomorphism
of a small open set of $\RR^n$.
The singular locus of $X$ is the set
$
S=\{x\in X \mbox{ s.t. } \G_x\neq \{1\} \},
$
and of course, $X-S$ is a smooth manifold.

We now describe the de Rham complex of an $n$-dimensional orbifold $X$. First of all, a $k$-form $\eta$ on $X$ consists of a collection
of differential $k$-forms $\{\eta_\a \}$ such that:
\begin{enumerate}
\item $\eta_\a \in \O^k(U_\a)$ is $\G_\a$-invariant,
\item If $V_\d \subset V_\a $ and $\imath_{\d \a} \colon U_\d \to U_\a$ is the associated embedding, then $\imath_{\d \a}^*(\eta_\a)= \eta_\d$.
\end{enumerate}
The space of orbifold $k$-forms on $X$ is denoted by $\O^k(X)$. In addition, it is obvious that the wedge product of
orbifold forms and the exterior differential $d$ on $X$ are well defined. Therefore  $(\O^*(X),d)$  is a
 differential graded algebra that we call the de Rham complex of $X$. Its cohomology coincides
with the cohomology of the space $X$ with real coefficients, $H^*(X)$ (see \cite[Proposition 2.13]{CFM}).

In this paper the orbifold involved is the orbit space of a smooth manifold $M$ under the action of $\Z_2= \{ \rId, \j \}$, where $\j$ is an involution. The singular locus of $X=M/\Z_2$ is $\Fix(\j)$. In addition, let us denote by $\O^k(M)^{\Z_2}$ the space of $\Z_2$-invariant $k$-forms. Then
$$
\O^k(X)=\O^k(M)^{\Z_2},
$$
and both the wedge product and exterior derivative preserve the $\Z_2$-invariance.
An averaging argument ensures that $H^k(X)=H^k(M)^{\Z_2}$.

\subsection{$\Gtwo$ structures} We now focus on $\Gtwo$ structures on manifolds and orbifolds. Basic references are \cite{Br06}, \cite{FG}, \cite{HL}, \cite{Joyce2} and \cite{Salamon}.

Let us identify $\RR^7$ with the imaginary part of the octonions $\mathbb{O}$. The multiplicative structure on $\mathbb{O}$ endows $\RR^7$ with a cross product $\x$, which defines a $3$-form  $\vp_0(u,v,w)= \la u\x v, w \ra $, where  $\la \cdot, \cdot \ra$ denotes the scalar product on $\RR^7$. In coordinates,
\begin{equation}\label{eqn:std}
\vp_0= v^{127} + v^{347} + v^{567} + v^{135} - v^{236} - v^{146} - v^{245},
\end{equation}
where $(v^1,\dots,v^7)$ is the standard basis of $(\RR^7)^*$ and $v^{ijk}$ stands for $v^i \wedge v^j \wedge v^k$.
The stabilizer of $\vp_0$ under the action of $\Gl(7,\RR)$ on $\L^3(\RR^7)^*$ is the group $\Gtwo$, a simply connected $14$-dimensional Lie group which is contained in $\SO(7)$. 

\begin{definition}
Let $V$ be a real vector space of dimension $7$. A $3$-form $\vp \in \L^3 V^*$ is
a $\Gtwo$ form on $V$ if there is a linear isomorphism $u\colon V\to \RR^7$
such that $u^*(\vp_0)= \vp$, where $\vp_0$ is given by equation (\ref{eqn:std}).
\end{definition}

A $\Gtwo$ structure $\varphi$ determines an orientation because $\Gtwo \subset \SO(7)$; the choice of a volume form $\mathrm{vol}$ on $V$ compatible with the orientation determines a unique metric $g_{\mathrm{vol}}$ with associated unit-length volume form $\mathrm{vol}$ by the formula:
$$
i(x)\vp \wedge i(y) \vp \wedge \vp = 6 g_{\mathrm{vol}}(x,y)\mathrm{vol},
$$
which ensures that the metric $u^*(g_0)$ is determined by the volume form $u^*(\mathrm{vol}_{\RR^7})$.
Note that the metric $u^*(g_0)$ does not depend on the isomorphism $u$ with $u^*(\vp_0)=\vp$. We say that $g=u^*(g_0)$ is the metric associated to $\vp$.
Of course, a $\Gtwo$ form $\vp$ induces a cross product $\x$ on $V$ by the formula $\vp(u,v,w)=g(u\x v,w)$.

The orbit of $\vp_0$ under the action of $\Gl(7,\RR)$ is an open set of $\L^3 (\RR^7)^*$, thus the space of $\Gtwo$ forms on $\RR^7$ is an open set.

\begin{definition}
Let $M$ be a $7$-dimensional manifold. A $\Gtwo$ form on $M$ is a $3$-form $\vp \in \O^3(M)$ such that for every $p \in M$
the $3$-form $\vp_p$ is a $\Gtwo$ form. 

Let $X$ be a $7$-dimensional orbifold with atlas $\{(U_\a,V_\a,\psi_\a,\G_\a) \}$. A $\Gtwo$ form on $X$ is a
differential $3$-form $\vp \in \O^3(X)$ such that $\vp_\a$ is a $\Gtwo$ form
on $U_\a$.
\end{definition}

Let $\vp$ be a $\Gtwo$ form on a manifold $M$ or an orbifold $X$. In both cases, 
$\vp$ determines a metric $g$ and a cross product $\x$. In this case we say that $(M,\vp,g)$ or $(X,\vp,g)$
is a $\Gtwo$ structure. In addition, $\Gtwo$ manifolds are of course oriented.

We state a well-known fact about $\Gtwo$ structures (see for instance \cite[Chapter 10, Section 3]{Joyce2}).

\begin{lemma}\label{universal}
There exists a universal constant $m$ such that if $(M,\varphi,g)$ is a $\Gtwo$ structure and $\| \phi - \varphi \|_{C^0,g}< m$ then $\phi$ is a $\Gtwo$ form.
\end{lemma}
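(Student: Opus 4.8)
The plan is to reduce everything to a single open-orbit statement in the fixed vector space $\L^3(\RR^7)^*$ and then transport it to each tangent space by a pointwise isometry. Recall from the discussion preceding the lemma that the set $\mathcal{O}$ of $\Gtwo$ forms on $\RR^7$ is precisely the orbit of $\vp_0$ under $\Gl(7,\RR)$, and that this orbit is an open subset of $\L^3(\RR^7)^*$. Since $\vp_0 \in \mathcal{O}$ and $\mathcal{O}$ is open, there exists $m>0$ such that the ball $\{ \psi \in \L^3(\RR^7)^* : \norm{\psi-\vp_0}_{g_0} < m \}$ is contained in $\mathcal{O}$, where $\norm{\cdot}_{g_0}$ denotes the norm on $3$-forms induced by the standard inner product $g_0$ on $\RR^7$. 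I claim this $m$, which depends only on the fixed data $(\vp_0,g_0)$, is the universal constant sought.

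First I would unwind the $C^0$ hypothesis pointwise: $\norm{\phi-\vp}_{C^0,g}<m$ means $\norm{\phi_p-\vp_p}_{g_p}<m$ for every $p\in M$, where $\norm{\cdot}_{g_p}$ is the norm on $\L^3 T_p^*M$ induced by $g_p$. Fix $p$ and choose, as in the definition of the associated metric, a linear isomorphism $u_p\colon T_pM \to \RR^7$ with $u_p^*\vp_0 = \vp_p$; by the very construction of $g$ one has $g_p = u_p^* g_0$, so $u_p$ is an isometry of inner product spaces. The key (routine) linear-algebra fact is that an isometry of inner product spaces induces an isometry on the associated spaces of $3$-forms; hence the pullback $u_p^*\colon \L^3(\RR^7)^* \to \L^3 T_p^*M$ preserves norms, i.e. $\norm{u_p^*\psi}_{g_p} = \norm{\psi}_{g_0}$ for all $\psi$.

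With this at hand I would transport $\phi_p$ to $\RR^7$: set $\psi_p := (u_p^{-1})^*\phi_p$, the unique $3$-form on $\RR^7$ with $u_p^*\psi_p = \phi_p$. Then
\[
\norm{\psi_p - \vp_0}_{g_0} = \norm{u_p^*\psi_p - u_p^*\vp_0}_{g_p} = \norm{\phi_p - \vp_p}_{g_p} < m,
\]
so $\psi_p$ lies in the ball above, hence $\psi_p \in \mathcal{O}$ is a $\Gtwo$ form on $\RR^7$. Consequently there is $w_p \in \Gl(7,\RR)$ with $w_p^*\vp_0 = \psi_p$, and then $(w_p\circ u_p)^*\vp_0 = u_p^*\psi_p = \phi_p$, so $\phi_p$ is a $\Gtwo$ form for every $p$; that is, $\phi$ is a $\Gtwo$ form on $M$.

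The only point requiring care — and the main (if mild) obstacle — is the uniformity of $m$: one must check that nothing above depends on $M$, $\vp$, or the point $p$. This is exactly what the pointwise isometric identification achieves, since it reduces the condition ``$\phi_p$ is a $\Gtwo$ form'' to membership in a fixed open ball around $\vp_0$ in the fixed normed space $\bigl(\L^3(\RR^7)^*,\norm{\cdot}_{g_0}\bigr)$, whose radius $m$ is manifestly independent of all the data. I would also remark that no continuous or smooth dependence of $u_p$ on $p$ is needed, since the argument is applied separately at each point and uses only the existence of $u_p$.
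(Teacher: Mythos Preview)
Your proof is correct and follows essentially the same approach as the paper: both fix $m$ as the radius of an open ball around $\vp_0$ contained in the open orbit of $\Gtwo$ forms in $\L^3(\RR^7)^*$, and then use a pointwise linear isomorphism $T_pM\to\RR^7$ carrying $(\vp_p,g_p)$ to $(\vp_0,g_0)$ to transport $\phi_p$ into that ball. Your write-up is slightly more detailed about why the isometry preserves norms on $3$-forms and why no smooth dependence of $u_p$ on $p$ is needed, but the argument is the same.
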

\begin{proof}
Let $(\R^7, \varphi_0,g_0)$ be the standard $\Gtwo$ structure. Being the space of $\Gtwo$ forms on $\R^7$  open in $\Lambda^3 (\R^7)^*$, there exists a constant $m>0$ such that if a $3$-form $\phi_0$ verifies that $\| \phi_0 - \varphi_0 \|_{g_0} <m$, then $\phi_0$ is a $\Gtwo$ form. We now check that $m$ is the claimed universal constant.
Let $(M,\varphi,g)$ be a $\Gtwo$ manifold; let $\phi$ such that $ \| \phi_p - \varphi_p \|_{g_p} < m$ for every $p\in M$. In order to check that $\phi_p$ is a $\Gtwo$ form, let $A \colon (T_pM, \varphi_p,g_p) \to (\R^7, \varphi_0,g_0)$ be an isomorphism of $\Gtwo$ vector spaces, then:
$$ 
\| A^t \phi_p - \varphi_0 \|_{g_0} =  \| \phi_p - \varphi_p \|_{g_p} < m
$$
and therefore $A^t \phi_p$ is a $\Gtwo$ form. Since $A$ is an isomorphism, $\phi_p$ is also a $\Gtwo$ form.
\end{proof}

In \cite{FG} Fern\' andez and Gray classified $\Gtwo$ structures $(M,\vp, g)$ into $16$ types
according to $\nabla \vp$, where $\nabla$ denotes the Levi-Civita connection associated to $g$.
The motivation for such classification is the holonomy principle, stating that the holonomy of $g$
is contained in $\Gtwo$ if and only if $\nabla \vp=0$. In \cite{FG} they also prove that
$\nabla \vp=0$ if and only if $d\vp=0$ and $d(\star \vp)=0$, where $\star$ denotes the Hodge star.

In this paper we are interested in closed and torsion-free $\Gtwo$ structures on manifolds and orbifolds:

\begin{definition}
Let $(M,\vp,g)$ or $(X,\vp,g)$ a $\Gtwo$ structure on a manifold or an orbifold.
We say the $\Gtwo$ structure is closed if $d\vp=0$. If in addition $d(\star \vp)=0$ we say that the
$\Gtwo$ structure is torsion-free.
\end{definition}

\begin{definition}
Let $(X,\vp)$ be a closed $\Gtwo$ structure on a $7$-dimensional orbifold. A
closed $\Gtwo$ resolution of $(X, \vp)$ consists of a smooth manifold endowed with a closed $\Gtwo$ structure $(\widetilde X, \p)$ 
and a map $\rho \colon \widetilde X \to X$ such that:
\begin{enumerate}
\item Let $S\subset X$ be the singular locus and $E=\rho^{-1}(S)$. Then, $\rho|_{\widetilde X- E} \colon \widetilde X - E \to X- S$ is a diffeomorphism,
\item Outside a neighbourhood of $E$, $\rho^*(\vp)= \p$.
\end{enumerate}
The subset $E$ is called exceptional locus.
\end{definition}

\subsubsection{$\Gtwo$ involutions}

\begin{definition}
Let $(M,\vp)$ be a $\Gtwo$ manifold, we say that $\j \colon M \to M$ is a $\Gtwo$ involution if $\j^*(\vp)=\vp$, $\j^2=\rId$, and $\j \neq \rId$.
\end{definition}

In this paper we shall focus on orbifolds that are obtained as a quotient of a closed $\Gtwo$ manifold $(M,\vp)$ by the action of a $\Gtwo$ involution $\j$; that is $X=M/\j$. The next result states that the fixed locus $L$ of $\j$ is a $3$-dimensional submanifold. 

\begin{lemma} \label{lem:3-dim}
The submanifold $L$ is $3$-dimensional and oriented by $\vp|_L$. In addition, $\vp|_L$ is the oriented unit-length volume form determined by the metric $g|_L$.
\end{lemma}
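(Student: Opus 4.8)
The plan is to work pointwise. Fix $p \in L = \Fix(\j)$. Since $\j$ is a $\Gtwo$ involution with $\j(p) = p$, the differential $d\j_p \colon T_pM \to T_pM$ is an involutive linear map preserving the $\Gtwo$ form $\vp_p$, hence it lies in $\Gtwo \subset \SO(7)$. So first I would classify the involutions in $\Gtwo$. An element $A \in \Gtwo$ with $A^2 = \id$, $A \neq \id$, has eigenvalues $\pm 1$; writing $T_pM = V_+ \oplus V_-$ for the $(\pm 1)$-eigenspaces, the fixed subspace $V_+$ is exactly $T_pL$. The claim is that $\dim V_+ = 3$ and that $\vp_p|_{V_+}$ is the Riemannian volume form of $(V_+, g_p|_{V_+})$. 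The standard fact (which I would either cite from the $\Gtwo$ literature, e.g. along the lines of the local model $\RR^3 \x (\CC^2/\{\pm1\})$ mentioned in the introduction, or prove directly) is that up to the $\Gtwo$-action there is a unique conjugacy class of non-trivial involutions in $\Gtwo$, and a representative is, in the standard basis of $\RR^7$ with $\vp_0$ as in \eqref{eqn:std}, the map $A = \diag(1,1,-1,-1,-1,-1,1)$ or an equivalent sign pattern fixing a coassociative-complementary $3$-plane; equivalently, $V_+$ is an \emph{associative} $3$-plane, i.e. one on which $\vp_0$ restricts to the volume form.

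Concretely, I would argue as follows. Since $A \in \SO(7)$ and $A^2 = \id$, $\dim V_-$ is even, so $\dim V_+ \in \{1,3,5,7\}$; and $A \neq \id$ rules out $7$, while $A = \id$ on a hyperplane would force $A \in \SO(7)$ to be $\pm\id$ there, impossible for $\dim V_+ = 5$ inside $\Gtwo$ (one checks $\Gtwo$ contains no such element, e.g. because the $-1$-eigenspace would be a $2$-plane and $\Gtwo$ acts transitively on $2$-planes of a fixed type with stabilizer $\SU(2) \times \SO(3)$-type... ). Cleaner: decompose $\vp_p$ with respect to $T_pM = V_+ \oplus V_-$. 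Because $A^*\vp_p = \vp_p$ and $A$ acts by $+1$ on $V_+$ and $-1$ on $V_-$, the components of $\vp_p$ of type $\L^3 V_+^*$, $\L^1 V_+^* \otimes \L^2 V_-^*$ survive (even number of $V_-$-factors) while the $\L^2 V_+^* \otimes \L^1 V_-^*$ and $\L^3 V_-^*$ components vanish. Nondegeneracy of $\vp_p$ as a $\Gtwo$ form then forces $\dim V_+ = 3$: if $\dim V_+ = 1$ the surviving part is $v \otimes \omega$ with $v \in V_+^*$, $\omega \in \L^2 V_-^*$, whose $\Gtwo$-stabilizer contains $-\id$ on $V_+$, contradicting that $-\id \notin \Gtwo$ acting... — here I would instead just invoke the known classification; if $\dim V_+ = 5$ the surviving part is $\vp_p|_{V_+} \in \L^3 V_+^*$ alone, a generic $3$-form on $\RR^5$, which cannot have stabilizer of the right dimension to sit inside $\Gtwo$. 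So $\dim V_+ = 3$.

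It remains to identify $\vp_p|_{V_+}$. With $\dim V_+ = 3$, write $\vp_p = \nu + \a$ where $\nu \in \L^3 V_+^*$ and $\a \in V_+^* \otimes \L^2 V_-^*$. Using the normal form \eqref{eqn:std} with $V_+ = \Span{v_1', v_2', v_7'}$ in a suitable oriented orthonormal basis (an associative $3$-plane), we read off $\nu = v^{127}$, which is precisely $\pm$ the Riemannian volume form of $V_+$ for the induced metric; orienting $L$ by $\vp_p|_L$ makes it the unit-length volume form of $g_p|_L$, and this orientation is consistent across $L$ since $\vp$ is globally defined and $\vp_p|_{T_pL}$ is nowhere zero (it has unit norm). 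Smoothness and $3$-dimensionality of $L$ as a submanifold is standard: $L$ is the fixed-point set of the isometry $\j$, hence a totally geodesic submanifold, and near $p$ it is $\exp_p(V_+)$, of dimension $3$.

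The main obstacle is the purely algebraic classification step — showing a non-trivial involution in $\Gtwo$ has a $3$-dimensional fixed space and that this space is an associative plane. I expect to handle it by either citing the standard $\Gtwo$ representation theory (the decomposition $\Gtwo$-action on $\L^3$, associative/coassociative calibrations à la Harvey–Lawson \cite{HL}) or by the short eigenspace-parity plus nondegeneracy argument sketched above, reducing to the explicit normal form \eqref{eqn:std}.
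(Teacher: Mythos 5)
Your overall strategy coincides with the paper's: reduce the lemma to the pointwise algebraic statement that a non-trivial involution $\jota \in \Gtwo$ has a $3$-dimensional $(+1)$-eigenspace on which $\vp_0$ restricts as the unit volume form, and get the submanifold structure of $L$ from the fixed-point set of the isometry $\j$. However, the central algebraic step is not actually carried out, and the sketches you offer in its place have genuine gaps. The parity argument only narrows $\dim V_+$ to $\{1,3,5\}$; to exclude $1$ and $5$ you fall back on ``the known classification'' and on two ad hoc arguments, both problematic. For $\dim V_+=5$ you assert that the only surviving component of $\vp_p$ is $\vp_p|_{V_+}\in \L^3 V_+^*$, but by your own parity count the component in $V_+^*\otimes \L^2 V_-^*$ (here $\dim V_-=2$) also survives, so the ``generic $3$-form on $\RR^5$'' discussion does not apply as stated; the $\dim V_+=1$ argument is left unfinished. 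Likewise, your identification of $\vp_p|_{V_+}$ with the Riemannian volume form is read off from a normal form in which $V_+$ is an associative plane and $\jota$ is the standard sign pattern --- but the existence of that normal form (uniqueness of the conjugacy class of involutions in $\Gtwo$) is exactly the classification you have not proved, so as written this part is circular unless you supply a proof or a precise reference.

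The paper closes both gaps with two short arguments you could adopt. First, pick a unit vector $v_1\in V_+$ (possible because $\det\jota=1$ forces $\dim V_-$ even, so $V_+\neq 0$ --- a point your parity observation makes explicit); then $W=\la v_1\ra^{\perp}$ carries the $\SU(3)$ structure $\o=i(v_1)\vp_0$, $\Re \O=\vp_0|_W$, which is preserved by $\jota$. As a complex-linear map preserving the complex volume form, $\jota|_W$ has eigenvalues $\pm 1$ with product $1$, and $\jota\neq \rId$ forces the pattern $(1,-1,-1)$; hence $\dim V_+=1+2=3$, disposing of the cases $1$ and $5$ at once. Second, since $\jota(u\x v)=\jota(u)\x\jota(v)$, for an orthonormal basis $(v_1,v_2,v_3)$ of $V_+$ the vector $v_1\x v_2$ lies in $V_+$ and is orthogonal to $v_1$ and $v_2$, hence equals $\pm v_3$, so $\vp_0(v_1,v_2,v_3)=\pm 1$; this yields the volume-form statement directly, with no appeal to a normal form.
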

\begin{proof}
The result is deduced from the fact that if $(\RR^7,\vp_0, \la \cdot, \cdot \ra)$ is the standard $\Gtwo$ structure on $\RR^7$ and if $\jota \in \Gtwo$ is an involution,  $\jota \neq \rId$, then $\jota$ is diagonalizable with eigenvalues $\pm 1$ and $\dim(V_{1})=3$, $\dim(V_{-1})=4$, where $V_{\pm 1}$ denotes the eigenspace associated to the eigenvalue $\pm 1$. In addition, $\vp_0(v_1,v_2,v_3)=\pm 1$ if $(v_1,v_2,v_3)$ is an orthogonal basis of $V_1$.

We now prove this statement; first $\jota$ is diagonalizable with eigenvalues $\pm 1$ because $\jota^2=\rId$, $\jota \neq \rId$ and $\j \in \SO(7)$. Let us take a unit-length vector $v_1 \in V_{1}$; the vector space $W=\la v_1 \ra^\perp$ is fixed by $\jota$ because $\jota\in \SO(7)$, and carries in addition an $\SU(3)$ structure
determined by $\o= i(v_1)\vp_0$, $\Re(\O)=\vp_0|_W$ (see \cite{SW}). Of course, the $\SU(3)$ structure is preserved by $\jota$.
Viewed as a complex map,  $\jota \colon W \to W$ has three complex eigenvalues $\l_1, \l_2, \l_3$ that verify $\l_j^2=1$ and $\l_1 \l_2 \l_3=1$  because $\jota^2=\rId$ and $\jota$ preserves the $\SU(3)$ structure. Being $\jota \neq \rId$, we obtain that $\l_1=1$ and $\l_2=\l_3=-1$ up to a permutation of the indices; this proves that $\dim(V_{1})=3$ and $\dim(V_{-1})=4$.  Now observe that $\jota (u \x v)= \jota(u) \x \jota(v)$, where $\x$ is the cross product on $\RR^7$ that determines $\vp$. Thus,
let $(v_1,v_2,v_3)$ be an orthogonal basis of $V_1$, then $v_1 \x v_2 \in V_1$; so necessarily, $v_1 \x v_2= \pm v_3$ and $\vp_0(v_1,v_2,v_3)=\pm 1$.
\end{proof}

\begin{remark}
If $d\vp=0$, Lemma \ref{lem:3-dim} states that $L$ is a calibrated submanifold of $M$ in the sense of \cite{HL}.
\end{remark}

\subsubsection{$\SU(2)$ structures}

Let us identify $\RR^4$ with $\HH$ and identify $\SU(2)$ with $\Sp(1)$ as usual.
The multiplication by $i$, $j$ and $k$ on the quaternions yields $\Sp(1)$-equivariant endomorphisms $I$, $J$ and $K$ that determine
invariant $2$-forms by the contraction of these endomorphism with
the scalar product on $\RR^4$.
In coordinates, these are:
\begin{equation}\label{eqn:su2}
\begin{aligned}
\o_1^0 = w^{12} + w^{34}, \qquad
\o_2^0= w^{13} - w^{24}, \qquad
\o_3^0 = w^{14} + w^{23}.
\end{aligned}
\end{equation}
where $(w_1,w_2,w_3,w_4)$ denotes the standard basis of $\RR^4$.

\begin{definition}
Let $W$ be a real vector space of dimension $4$. An $\SU(2)$ structure on $W$
is determined by $2$-forms $(\o_1,\o_2,\o_3)$ such that 
there is a linear isomorphism $u\colon W \to \RR^4$
with $u^*(\o_j^0)= \o_j$, where  the forms $\o_j^0$ are given by equation (\ref{eqn:su2}).
\end{definition}

An $\SU(2)$ structure on a vector space $W$ determines a $\Gtwo$ structure on $W\oplus \RR^3 $.
To check this we can suppose that $(W,\o_1,\o_2,\o_3)=(\RR^4,\o_1^0,\o_2^0, \o_3^0)$. Denote by $(v_5,v_6,v_7)$ the standard basis of $\RR^3$, then
\begin{equation}\label{eqn:su2-g2}
\vp_0= v^{567} + \o_1^0 \wedge v^7 + \o_2^0 \wedge v^5 - \o_3^0 \wedge v^6.
\end{equation}
In addition if we fix on $\RR^3$ the orientation determined by $v^{567}$, then $W$ is oriented by $\frac{1}{2}(\o_1^0)^2$.

\begin{definition}
Let $N$ be a $4$-dimensional manifold. An $\SU(2)$ structure on $N$ consists of
$2$-forms $(\o_1,\o_2,\o_3) \in \O^2(N)$ that determine an $\SU(2)$ structure on $T_pN$ for every $p\in N $. In addition, if $d\o_1=d\o_2=d\o_3=0$ we say that $(\o_1,\o_2,\o_3)$ is a hyperK\" ahler structure.

Let $Y$ be a $4$-dimensional orbifold with atlas $\{(U_\a,V_\a,\psi_\a,\G_\a) \}$. An $\SU(2)$ structure on $Y$ consists of
$2$-forms $(\o_1,\o_2,\o_3) \in \O^2(Y)$ such that $(\o_1^\a,\o_2^\a,\o_3^\a)$ is an $\SU(2)$ structure on $U_\a$. In addition, if $d\o_1=d\o_2=d\o_3=0$ we say that $(\o_1,\o_2,\o_3)$ is a hyperK\" ahler structure.
\end{definition}

In view of Lemma \ref{lem:3-dim} the local model of $X$ around $L$ is $( \CC^2/\ZZ_2) \x \RR^3$, with $\Z_2= \la -\rId, \rId \ra$. The standard $\Gtwo$ form induces the orbifold hyperK\" ahler $\SU(2)$ structure $(\o_1^0, \o_2^0, \o_3^0)$ on $\CC^2/\Z_2$. We now detail the hyperK\" ahler resolution of $Y=\CC^2/\Z_2$; this will be useful in order to construct the resolution of $X$ in section \ref{sec:resolu}.

The holomorphic resolution of $Y$ is $N=\widetilde \CC^2/\ZZ_2$; where $\widetilde \CC^2$ is the blow-up of $\CC^2$ at $0$. That is,
$$
\widetilde \CC^2= \{ (z_1,z_2,\ell)\in \CC^2 \x \CP^1 \mbox{ s.t. } (z_1,z_2)\in \ell\},
$$
and the action of $-\rId$ lifts to $(z_1,z_2,\ell) \longmapsto (-z_1,-z_2,\ell)$.
We shall call the exceptional divisor $E=\{0 \}\x \CP^1 \subset N$.
Note that there is a well-defined projection $\s_0 \colon N \to \CP^1$. Let us consider $r_0 \colon Y \to [0,\infty)$ the radial function induced from $\CC^2$; one can check taking coordinates that $r_0^2$ is not smooth on $N$, but $r_0^4$ is.

Consider the blow up map, $\chi_0 \colon N \to Y$. Then, one can check that $\chi_0^*(\o_2^0)$ and $\chi^*(\o_3^0)$ are non-degenerate smooth forms on $N$; this holds because $\o_2^0 + i \o_3^0 = dz_1 \wedge dz_2$ and the pullback of a holomorphic form under a holomorphic resolution is holomorphic.

A computation in coordinates shows that $\chi_0^*(\o_1^0)$ has a pole on $\CP^1$. Let $a>0$ and define $\ssf_a(x)=  \sg_a(x) + 2a\log(x)$, where $ \sg_a(x)=(x^4 + a^2)^{1/2}- a\log((x^4 + a^2)^{1/2}+a)$. Consider on $Y-E$:
$$
\widehat \o_1^a= -\frac{1}{4} dId\ssf_a(r_0).
$$
One can check that $(\widehat \o_1^a,\chi_0^*(\o_2^0), \chi_0^*(\o_3^0))$ is a hyperK\" ahler structure on $N-E$; it can be extended as a hyperK\" ahler structure on $N$ because:
$$ 
-\frac{1}{4} dId (\log(r_0^2))= \s_0^*(\o_{\CP^1}),
$$
where $\o_{\CP^1}$ stands for the Fubini-Study form of $\CP^1$.

\subsection{Formality}

In this section we review some definitions and results about formal manifolds and formal
orbifolds; basic references are \cite{DGMS}, \cite{FOT}, and \cite{OT}.

We work with commutative differential graded algebras (in the sequel CDGAs); these consist
of a pairs $(A,d)$ where $A$ is a commutative graded algebra $A=\oplus_{i\geq 0}{A^i}$ over $\RR$, and $d \colon A^* \to A^{*+1}$ is a differential, which is a graded derivation that verifies $d^2=0$.
If $a \in A$ is an homogenous element, we denote its degree by $|a|$, and $\bar{a}=(-1)^{|a|}a$.

The cohomology algebra of a CDGA $(A,d)$ is denoted by $H^*(A,d)$; it is also a CDGA with the differential being zero. If $a\in A$ is a closed element we denote its cohomology class by $[a]$. The CDGA $(A,d)$ is said to be connected if $H^0(A,d)=\RR$.

 In our context, the main examples of CDGAs are the de Rham complex of a manifold or an orbifold. In section \ref{sec:const} we also make use of the Chevalley-Eilenberg CDGA of a Lie group $G$, that consists of the algebra $\L^* \frg^*$, the differential of a $1$-form is
 $
 d\a (x,y) = -\a[x,y],
 $ 
and is extended to $\L^* \frg^*$ as a graded derivation.

\begin{definition}
A CDGA $(A,d)$ is said to be minimal if:
\begin{enumerate}
\item $A$ is free as an algebra, that is $A$ is the free algebra
$\L V$  over a graded vector
space $V =\oplus_i V^i$.
\item There is a collection of generators $\{ a_i \}_{i }$ indexed by a well ordered set, such
that $|a_i| \leq  |a_j|$ if $i<j$ and each $da_j$ is expressed in terms of the previous $a_i$ with $i<j$.
\end{enumerate}
\end{definition}

Morphisms between CDGAs are required to preserve the degree and to commute with the differential; a morphism of CDGAs $\k \colon (B,d) \to (A,d)$ is said to be a quasi-isomorphism if it induces an isomorphism on cohomology $\k \colon H^*(B,d) \to H^*(A,d)$.

\begin{definition}
A CDGA $(B,d)$ is a model of the CDGA $(A,d)$ if there exists a quasi-isomorphism
$
\k \colon (B, d) \to (A,d).
$
If $(B,d)$ is minimal we say that $(B,d)$ is a minimal model of $(A,d)$.
\end{definition}

Minimal models  of connected DGAs exist and are unique up to isomorphism of CDGAs. So we
define the minimal model of a connected manifold or a connected orbifold as the minimal model of
its associated de Rham complex.

\begin{definition}
A minimal algebra $(\L V , d)$ is formal if there exists a quasi-isomorphism,
$$
(\L V, d ) \to (H^*( \L V, d), 0).
$$
A manifold or an orbifold is formal if its minimal model is formal.
\end{definition}

We now recall the definition of triple Massey products; these are objects that detect non-formality of manifolds. 
Let $(A,d)$ be a CDGA and let $\xi_1,\xi_2,\xi_3$ be cohomology classes such that $\xi_1\xi_2=0$ and $\xi_2\xi_3=0$. Under these assumptions we can define the triple Massey product of these cohomology classes $\la \xi_1,\xi_2,\xi_3 \ra$. In order to provide its definition we first introduce the concept of a defining system for $ \la \xi_1,\xi_2, \xi_3 \ra $.
\begin{definition}
 A defining system for $ \la \xi_1,\xi_2, \xi_3 \ra $ is an element $(a_1,a_2,a_3,a_{12},a_{23})$
such that:
\begin{enumerate}
\item $[a_i]=\xi_i$ for $1 \leq i \leq 3$,
\item $da_{12}= \bar{a}_1a_2$, and $da_{23}= \bar{a}_2a_3$.
\end{enumerate}
\end{definition}
One can check that
$
\bar{a}_1a_{23} + \bar{a}_{12} a_3
$
is a closed ($|a_1| + |a_2| + |a_3| -1$)-form. The triple Massey product $\la \xi_1, \xi_2,\xi_3 \ra$ is the set formed by the cohomology classes that defining systems determine, that is:
$$
 \{[ \bar{a}_1a_{23} + \bar{a}_{12} a_3]  \mbox{ s.t. } (a_1,a_2,a_3,a_{12},a_{23}) \mbox{ runs over all defining systems}\}.
$$
If $0 \in \la \xi_1, \xi_2, \xi_3 \ra$ we say that the triple Massey product is trivial.

\begin{theorem}
Let $(\L V,d)$ be a formal minimal algebra. Let $\xi_1,\xi_2,\xi_3$ be cohomology classes such that the triple Massey product
$\la \xi_1,\xi_2,\xi_3\ra$ is defined.
Then $\la \xi_1,\xi_2,\xi_3 \ra$ is trivial.
\end{theorem}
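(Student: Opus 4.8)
The plan is to show that a non-trivial triple Massey product is an obstruction to formality by exhibiting a uniform defining system in the cohomology algebra and transporting it via the formality quasi-isomorphism. Concretely, suppose $(\L V,d)$ is a formal minimal algebra, so there is a quasi-isomorphism $\psi\colon(\L V,d)\to(H,0)$ where I write $(H,0)=(H^*(\L V,d),0)$ with zero differential. Let $\xi_1,\xi_2,\xi_3\in H$ be cohomology classes with $\xi_1\xi_2=0$ and $\xi_2\xi_3=0$, so that $\la\xi_1,\xi_2,\xi_3\ra$ is defined, and let $(a_1,a_2,a_3,a_{12},a_{23})$ be an arbitrary defining system in $(\L V,d)$; I must show $[\bar a_1 a_{23}+\bar a_{12}a_3]=0$ in $H$.

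First I would observe that in the target CDGA $(H,0)$ the tuple $(\xi_1,\xi_2,\xi_3,0,0)$ is itself a defining system for $\la\xi_1,\xi_2,\xi_3\ra$: since the differential of $(H,0)$ is zero, the conditions $d(0)=\bar\xi_1\xi_2$ and $d(0)=\bar\xi_2\xi_3$ reduce exactly to the hypotheses $\xi_1\xi_2=0$ and $\xi_2\xi_3=0$. The class it determines is $[\bar\xi_1\cdot 0+\bar 0\cdot\xi_3]=0$. Next I would apply the morphism $\psi$ to the given defining system in $(\L V,d)$: because $\psi$ is a morphism of CDGAs it preserves products, degrees, the bar operation (which only depends on degree), and commutes with $d$; hence $(\psi(a_1),\psi(a_2),\psi(a_3),\psi(a_{12}),\psi(a_{23}))$ satisfies $d\psi(a_{12})=\psi(d a_{12})=\psi(\bar a_1 a_2)=\overline{\psi(a_1)}\psi(a_2)$ and similarly for $a_{23}$, and $[\psi(a_i)]$ in $(H,0)$ equals $\psi(a_i)$ itself (zero differential), which in turn equals $H(\psi)([a_i])=H(\psi)(\xi_i)$.

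Now I would use that $\psi$ is a quasi-isomorphism, so $H(\psi)$ is an isomorphism; in particular $[\psi(a_i)]$ and $\xi_i$ agree under this identification, but more to the point, any two defining systems for the same triple product in the \emph{same} CDGA determine classes differing by an element of the indeterminacy subgroup $\xi_1\cdot H^{|\xi_2|+|\xi_3|-1}+H^{|\xi_1|+|\xi_2|-1}\cdot\xi_3$. Applying this within $(H,0)$ to the two defining systems $(\psi(a_1),\dots,\psi(a_{23}))$ and $(\xi_1,\xi_2,\xi_3,0,0)$ — after using $H(\psi)$ to view the first as built from the $\xi_i$ — shows that $\psi_*[\bar a_1 a_{23}+\bar a_{12}a_3]=[\overline{\psi(a_1)}\psi(a_{23})+\overline{\psi(a_{12})}\psi(a_3)]$ lies in that indeterminacy group, hence in particular $0\in\psi_*\la\xi_1,\xi_2,\xi_3\ra$. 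Since $\psi_*=H(\psi)$ is injective, $0\in\la\xi_1,\xi_2,\xi_3\ra$, i.e. the product is trivial.

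The only genuinely delicate point is the bookkeeping in the last step: rather than invoking the indeterminacy lemma as a black box, it is cleaner to argue directly that $\overline{\psi(a_1)}\psi(a_{23})+\overline{\psi(a_{12})}\psi(a_3)$ is a coboundary in $(H,0)$, which (zero differential) means it is zero; one checks this by noting $d(\psi(a_{12})\psi(a_3))=\overline{\psi(a_1)}\psi(a_2)\psi(a_3)$ and, since in $(H,0)$ everything is closed, $\psi(a_1)\psi(a_2)\psi(a_3)=0$ forces the relevant combination to vanish, exploiting that $a_{23}$ and the product $\psi(a_{12})\psi(a_3)$ have the same differential up to sign. I expect the main obstacle to be organizing these sign and degree conventions so that the identity $[\bar a_1 a_{23}+\bar a_{12}a_3]\mapsto 0$ comes out exactly, since the naive transport of a defining system lands in a shifted but a priori nonzero class and one must verify the two candidate defining systems in $(H,0)$ are comparable.
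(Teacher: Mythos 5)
Your overall strategy is the standard one (the paper itself states this theorem without proof, as a classical fact going back to \cite{DGMS}, so there is no in-paper argument to compare against), and its crux is correct: pushing a defining system through the formality quasi-isomorphism $\psi\colon(\Lambda V,d)\to(H,0)$, every defining system in $(H,0)$ has the form $(\xi_1',\xi_2',\xi_3',b_{12},b_{23})$ with $b_{12},b_{23}$ arbitrary (zero differential), so the class it determines, $\bar{\xi}_1'b_{23}+\bar{b}_{12}\xi_3'$, is literally an element of the indeterminacy subgroup $\xi_1'H+H\xi_3'$; in particular $H(\psi)\bigl([\bar{a}_1a_{23}+\bar{a}_{12}a_3]\bigr)$ lies in that subgroup. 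However, two steps of your deduction are genuine gaps. First, the goal you set yourself, to prove $[\bar{a}_1a_{23}+\bar{a}_{12}a_3]=0$ for an \emph{arbitrary} defining system, is false in general: triviality here means only $0\in\langle\xi_1,\xi_2,\xi_3\rangle$, and replacing $a_{23}$ by $a_{23}+z$ with $z$ closed changes the class by $\pm\,\xi_1[z]$, which need not vanish even in a formal algebra. For the same reason the ``cleaner'' argument of your last paragraph is wrong: in $(H,0)$ the differential is zero, so no nonzero element is a coboundary, and $\overline{\psi(a_1)}\,\psi(a_{23})+\overline{\psi(a_{12})}\,\psi(a_3)$ is in general a nonzero element of the indeterminacy, not zero.

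Second, the inference ``the pushforward lies in the indeterminacy of $(H,0)$, hence $0\in\psi_*\langle\xi_1,\xi_2,\xi_3\rangle$, hence by injectivity $0\in\langle\xi_1,\xi_2,\xi_3\rangle$'' is a non sequitur as written: injectivity of $H(\psi)$ alone does not produce a defining system in $(\Lambda V,d)$ with vanishing class. What is missing is short but essential: (i) since $H(\psi)$ is also surjective and multiplicative, the indeterminacy of $(H,0)$ equals the image under $H(\psi)$ of $\xi_1\,H^{|\xi_2|+|\xi_3|-1}(\Lambda V,d)+H^{|\xi_1|+|\xi_2|-1}(\Lambda V,d)\,\xi_3$, so combining with injectivity you get $[\bar{a}_1a_{23}+\bar{a}_{12}a_3]=\pm\,\xi_1\zeta\pm\eta\,\xi_3$ for some classes $\zeta,\eta$ of $(\Lambda V,d)$; and (ii) choosing closed forms $z,w$ representing $\zeta,\eta$ and replacing $a_{23}$ by $a_{23}\mp z$ and $a_{12}$ by $a_{12}\mp w$ gives another defining system (the conditions only constrain $da_{12}$ and $da_{23}$, which are unchanged) whose class is exactly $0$; hence $0\in\langle\xi_1,\xi_2,\xi_3\rangle$. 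With (i) and (ii) inserted, and the opening claim weakened from ``every defining system gives zero'' to ``some defining system gives zero'', your argument becomes a complete and standard proof.
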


As a consequence, we obtain: 
\begin{corollary}\label{cor:massey-form}
Let $(\L V,d)$ be the minimal model of $(A,d)$. Let $\xi_1,\xi_2,\xi_3 \in H^*(A,d)$ such that the triple Massey product
$\la \xi_1,\xi_2,\xi_3\ra$ is defined.
If $ \la \xi_1, \xi_2, \xi_3 \ra$ is not trivial then $(\L V,d)$ is not formal.
\end{corollary}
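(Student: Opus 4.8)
The plan is to observe that this corollary is essentially the contrapositive of the preceding Theorem, the only additional ingredient being the transport of the Massey-product hypothesis from $H^*(A,d)$ back to the minimal model $(\L V,d)$ through the defining quasi-isomorphism. First I would fix a quasi-isomorphism $\k \colon (\L V,d) \to (A,d)$ exhibiting $(\L V,d)$ as the minimal model, and record that it induces an isomorphism of graded algebras $\k^* \colon H^*(\L V,d) \to H^*(A,d)$. Setting $\eta_i = (\k^*)^{-1}(\xi_i)$ for $i=1,2,3$, the relations $\xi_1\xi_2 = \xi_2\xi_3 = 0$ pull back to $\eta_1\eta_2 = \eta_2\eta_3 = 0$ because $\k^*$ is an algebra isomorphism, so the triple Massey product $\la \eta_1,\eta_2,\eta_3\ra$ is defined in the minimal model.

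Next I would establish the naturality of triple Massey products under the morphism $\k$. Applying $\k$ to a defining system $(a_1,a_2,a_3,a_{12},a_{23})$ for $\la \eta_1,\eta_2,\eta_3\ra$ produces the tuple $(\k a_1,\k a_2,\k a_3,\k a_{12},\k a_{23})$; since $\k$ commutes with $d$, preserves products, and preserves degrees (so that $\overline{\k a} = \k\bar a$), this image satisfies $[\k a_i]=\k^*\eta_i=\xi_i$, $d(\k a_{12})=\overline{\k a_1}\,\k a_2$ and $d(\k a_{23})=\overline{\k a_2}\,\k a_3$, hence it is a defining system for $\la \xi_1,\xi_2,\xi_3\ra$ whose associated cohomology class equals $\k^*$ of the class determined by the original system. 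Consequently $\k^*\la \eta_1,\eta_2,\eta_3\ra \subseteq \la \xi_1,\xi_2,\xi_3\ra$. In particular, if $0$ belonged to $\la \eta_1,\eta_2,\eta_3\ra$, then $\k^*(0)=0$ would belong to $\la \xi_1,\xi_2,\xi_3\ra$; contrapositively, the assumed non-triviality of $\la \xi_1,\xi_2,\xi_3\ra$ forces $\la \eta_1,\eta_2,\eta_3\ra$ to be non-trivial as well.

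Finally I would conclude by contradiction: were $(\L V,d)$ formal, the preceding Theorem applied to $\eta_1,\eta_2,\eta_3$ would yield that $\la \eta_1,\eta_2,\eta_3\ra$ is trivial, contradicting the previous step; hence $(\L V,d)$ is not formal, which is exactly the assertion of the corollary. I expect the only non-routine point to be the naturality statement of the second paragraph, namely verifying that a CDGA morphism carries defining systems to defining systems and intertwines the resulting indeterminacy cosets via $\k^*$. This, however, follows directly from the compatibility of $\k$ with the differential and the wedge product, so no serious obstacle arises; the remainder is a purely formal contrapositive of the cited Theorem.
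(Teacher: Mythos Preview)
Your proposal is correct and follows essentially the same approach as the paper: both pull the classes $\xi_i$ back through the quasi-isomorphism $\k$ to classes in $H^*(\L V,d)$, use that $\k$ carries defining systems to defining systems so that $\k^*$ maps the Massey product set in the minimal model into $\la \xi_1,\xi_2,\xi_3\ra$, and then invoke the preceding Theorem to derive a contradiction with formality. The only cosmetic difference is ordering---the paper assumes formality first and pushes forward a defining system witnessing $0$, whereas you first establish non-triviality in $(\L V,d)$ and then apply the Theorem---but the content is identical.
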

\begin{proof}
Suppose that $(\L V, d)$ is formal and let 
$
\k \colon (\L V,d)\to (A, d)
$
be a quasi-isomorphism. Let us take cohomology
classes $\xi_1',\xi_2',\xi_3' \in H^*(\L V,d)$ with $\k(\xi_j')=\xi_j$ then the Massey product $\la \xi_1',\xi_2', \xi_3' \ra$ is well-defined and there is a defining system $(a_1,a_2,a_3,a_{12},a_{23})$
such that
$$
\bar{a}_1 a_{23} + \bar{a}_{12} a_3= d\a.
$$ 
But of course $0=\k[\bar{a}_1 a_{23} + \bar{a}_{12} a_3] \in \la \xi_1, \xi_2, \xi_3 \ra$;  yielding a contradiction.
\end{proof}

We finally outline some aspects about finite group actions on minimal models. 
Let $M$ be a compact manifold and 
let $\k \colon (\L V, d) \to (\O(M),d)$ be the minimal model.
Let $\G$ be a finite subgroup of $\Diff(M)$ acting
on the left; the pullback of forms defines a right action of $\G$ on $(\O(M),d)$.

Lifting theorems for CDGAs ensure the existence 
of a morphism $\overline{\g} \colon \L V \to \L V$ that lifts up to homotopy the pullback by 
each $\g \in \G$; that is, $\k \circ \overline{\g} \sim \g^* \circ \k$; in particular, $[\k (\overline{\g}(a))]=[\g^* \k (a)]$ if $da=0$.
This implies that $\overline{\rId} \sim \rId$ and that 
$\overline{\g \g'}\sim \overline{\g} \, \overline{\g}'$; therefore these
liftings provide an homotopy action on $\L V$. These liftings
can be modified making use of group cohomology techniques (see \cite[Theorem 2]{O}) in order to
endow $\L V$ with a right action of $\G$.

\begin{theorem} \label{thm:action-dga}
Let $M$ be a compact connected manifold and let $\G$ be a subgroup of $\Diff(M)$ acting
on the left.

There is a right action of $\G$ on the minimal model $\k \colon (\L V, d) \to (\O(M),d)$ by morphisms of CDGAs such that $[\k(a \g)]=[\g^* \k(a)]$ for every closed element $a \in \L V$ and every $\g \in \G$.
\end{theorem}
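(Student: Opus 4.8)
The plan is to promote the homotopy action recalled above to a strict right action by one application of \cite[Theorem 2]{O}, and then to check that the cohomological normalization in the statement is preserved. First I would fix the homotopy action precisely. Since $(\L V,d)$ is minimal it is cofibrant, so $\k$ admits a homotopy inverse $\bar\k\colon(\O(M),d)\to(\L V,d)$ with $\k\bar\k\sim\rId$ and $\bar\k\k\sim\rId$. For $\g\in\G$ set $\overline\g:=\bar\k\circ\g^*\circ\k\colon\L V\to\L V$; this is an automorphism (as $\g^*$ is), and $\k\overline\g\sim\g^*\k$. Using $\bar\k\k\sim\rId$ and $(\g\g')^*=\g'^*\g^*$, a direct computation gives $\overline{\g'}\circ\overline\g\sim\overline{\g\g'}$ and $\overline{\rId}\sim\rId$, so $\g\mapsto[\overline\g]$ is a right action of $\G$ on $\L V$ up to homotopy, i.e. a homomorphism $\rho\colon\G\to\pi_0\Aut(\L V)$ for the appropriate variance. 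Moreover $[\k\,\overline\g(a)]=[\g^*\k(a)]$ for closed $a$, so under $\k_*\colon H^*(\L V)\xrightarrow{\cong}H^*(M)$ the class $\rho(\g)$ realizes the pullback $\g^*$.

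The crucial step is to lift $\rho$ through the surjection $\Aut(\L V)\twoheadrightarrow\pi_0\Aut(\L V)$ to an honest homomorphism $\sigma\colon\G\to\Aut(\L V)$, and this is exactly what \cite[Theorem 2]{O} provides. I would stress that this must be carried out without assuming $\G$ finite. The kernel of the surjection consists of the automorphisms homotopic to $\rId$, which is organized, via the tower $\L V=\bigcup_n\L V(n)$ coming from the minimal-algebra construction, into a sequence of abelian layers that are real vector spaces. A naive lifting would try to kill the associated obstructions by averaging over $\G$ or by vanishing of $H^{\ge 2}(\G;-)$ with real coefficients, but both are available only for finite $\G$ and are therefore not used here. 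Instead, the group-cohomology construction of \cite[Theorem 2]{O} selects compatible strict representatives layer by layer for an arbitrary group $\G$, and its argument is insensitive to the cardinality of $\G$. Setting $a\cdot\g:=\sigma(\g)(a)$ then defines the required strict right action of $\G$ on $(\L V,d)$ by CDGA morphisms.

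It remains to verify (2). By construction $\sigma(\g)$ lies in the homotopy class $\rho(\g)=[\overline\g]$, hence $\sigma(\g)\sim\overline\g$ and the two induce the same map on $H^*(\L V)$. Therefore, for every closed $a\in\L V$,
\[
[\k(a\cdot\g)]=[\k\,\sigma(\g)(a)]=[\k\,\overline\g(a)]=[\g^*\k(a)],
\]
which is the asserted compatibility.

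The main obstacle is the middle paragraph: rectifying a homotopy action into a strict one when $\G$ may be infinite. For a finite group one simply averages or invokes the vanishing of higher group cohomology over $\RR$, but neither is available in general; the whole point of passing through \cite[Theorem 2]{O} is that its layer-by-layer construction circumvents this and applies verbatim to an arbitrary subgroup of $\Diff(M)$.
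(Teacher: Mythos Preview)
Your overall shape matches the paper's: it too first produces homotopy lifts $\overline{\g}$ with $\k\circ\overline{\g}\sim\g^*\circ\k$ and then appeals to \cite[Theorem~2]{O} to rectify the homotopy action into a strict one. However there are two genuine problems with your write-up.

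First, and most importantly, your central claim that Oprea's result ``is insensitive to the cardinality of $\G$'' is false. Theorem~2 of \cite{O} is stated and proved for \emph{finite} groups; its inductive argument over the tower $\L V(n)$ kills the obstruction cocycles precisely because $H^{q}(\G;W)=0$ for $q\geq 1$ whenever $\G$ is finite and $W$ is a $\RR[\G]$-module (this is the standard transfer/averaging argument, which you yourself say is ``not used here''). For an infinite group those cohomology groups need not vanish and the rectification step can genuinely fail. The paper is aware of this: the paragraph introducing the theorem begins ``We finally outline some aspects about \emph{finite} group actions on minimal models'' and sets up $\G$ as a finite subgroup of $\Diff(M)$; the omission of the word ``finite'' in the theorem statement is an oversight, not an extension of scope, and all applications in the paper (Lemma~\ref{lem:formal-quotient}, Proposition~\ref{prop: no-torsion-free}) are for finite $\G$. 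So you should assume $\G$ finite and drop the paragraph asserting the contrary.

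Second, a smaller technical point: you obtain $\overline{\g}$ by writing $\overline{\g}=\bar\k\circ\g^*\circ\k$ with $\bar\k\colon(\O(M),d)\to(\L V,d)$ a ``homotopy inverse'' of $\k$. Cofibrancy of $\L V$ gives lifting properties for maps \emph{out of} $\L V$, not a CDGA map \emph{into} $\L V$ from $\O(M)$, which is not cofibrant; such a $\bar\k$ need not exist as a morphism of CDGAs. The paper instead uses the lifting theorem directly: since $\k$ is a quasi-isomorphism and $\L V$ is a Sullivan algebra, the composite $\g^*\circ\k\colon\L V\to\O(M)$ lifts through $\k$ up to homotopy, yielding $\overline{\g}$ with $\k\circ\overline{\g}\sim\g^*\circ\k$. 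This gives the same $\overline{\g}$ in the homotopy category and the rest of your verification of the cohomological identity goes through unchanged.
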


If there is a right action of a finite group $\G$ on a CDGA $(A,d)$ one can consider the CDGA of $\G$-invariant elements $(A^\G,d)$. An average argument leads us to $H^*(A,d)^\G=H^*(A^\G,d)$. 
In addition, if $\G$ also acts on $(B,d)$ on the right by morphisms and 
$\i \colon (A,d) \to (B,d)$ is a morphism such that $[\i (a \g)]= [(\i a) \g]$ for every closed $a \in A$ and
$\g \in \G$ one can define:
$$ 
\ui \colon (A^\G ,d)\to (B^\G,d), \qquad \ui a= |\G |^{-1}\sum_{\g  \in \G}{\i(a) \g},
$$
where $|\G|$ denotes the cardinal number of $\G$. This verifies that $[\ui (a)]=[\i(a)]$ for closed elements $a \in A^\G$.
In particular if $\i$ is a quasi-isomorphism so is $\ui$.

\begin{lemma}\label{lem:formal-quotient}
Let $\G$ be a finite group acting on a compact connected manifold $M$ by diffeomorphisms.
If $M$ is formal then $M/\G$ is also formal.
\end{lemma}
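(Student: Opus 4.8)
The plan is to descend everything to the level of CDGAs and exploit that $\Omega(M/\Gamma)=\Omega(M)^{\Gamma}$ together with the fact that, over $\R$, taking $\Gamma$-invariants is exact. Since $M$ is connected so is $M/\Gamma$, so its minimal model is defined. First I would fix the minimal model $\kappa\colon(\Lambda V,d)\to(\Omega(M),d)$ of $M$ and equip $(\Lambda V,d)$ with the right $\Gamma$-action of Theorem \ref{thm:action-dga}, so that $[\kappa(a\g)]=[\g^{*}\kappa(a)]$ for every closed $a\in\Lambda V$. The sub-CDGA $((\Lambda V)^{\Gamma},d)$ then carries, via the averaging construction recalled just before Lemma \ref{lem:formal-quotient}, a map $\underline{\kappa}\colon((\Lambda V)^{\Gamma},d)\to(\Omega(M)^{\Gamma},d)=(\Omega(M/\Gamma),d)$. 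Its effect on cohomology is the restriction of the ($\Gamma$-equivariant) isomorphism $\kappa_{*}\colon H^{*}(\Lambda V)\to H^{*}(M)$ to $\Gamma$-invariants, and since $H^{*}((\Lambda V)^{\Gamma})=H^{*}(\Lambda V)^{\Gamma}$ and $H^{*}(\Omega(M/\Gamma))=H^{*}(M)^{\Gamma}$ this restriction is an isomorphism; hence $\underline{\kappa}$ is a quasi-isomorphism and $((\Lambda V)^{\Gamma},d)$ is a model of $M/\Gamma$.

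Second I would feed in formality of $M$. By hypothesis the minimal model $(\Lambda V,d)$ is formal, so there is a quasi-isomorphism $\psi\colon(\Lambda V,d)\to(H^{*}(\Lambda V),0)$, which after identifying $H^{*}(\Lambda V)\cong H^{*}(M)$ through $\kappa$ and renormalising we may take so that $\psi_{*}$ is this identification. Exactly as for $\kappa$ one then has $[\psi(a\g)]=[\g^{*}\psi(a)]$ for closed $a$, where $\Gamma$ acts on $(H^{*}(M),0)$ by $\g^{*}$, so the same averaging produces a quasi-isomorphism $\underline{\psi}\colon((\Lambda V)^{\Gamma},d)\to(H^{*}(\Lambda V)^{\Gamma},0)=(H^{*}(M/\Gamma),0)$. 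Thus $((\Lambda V)^{\Gamma},d)$ is linked by quasi-isomorphisms both to $(\Omega(M/\Gamma),d)$ and to its own cohomology. Lifting the minimal model $\mu\colon(\Lambda W,d)\to(\Omega(M/\Gamma),d)$ of $M/\Gamma$ through the quasi-isomorphism $\underline{\kappa}$ — legitimate because minimal algebras are cofibrant — gives a quasi-isomorphism $\widetilde{\mu}\colon(\Lambda W,d)\to((\Lambda V)^{\Gamma},d)$, and composing with $\underline{\psi}$ yields a quasi-isomorphism $(\Lambda W,d)\to(H^{*}(\Lambda W),0)$. Hence the minimal model of $M/\Gamma$ is formal, i.e. $M/\Gamma$ is formal.

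The one point that genuinely needs care — and which I expect to be the main obstacle — is that $\underline{\kappa}$ and $\underline{\psi}$ must be \emph{morphisms of CDGAs}, not just chain maps, for the words ``model'' and ``formal'' above to make sense, whereas a naive average of algebra morphisms is not multiplicative. This is precisely where $\mathrm{char}\,\R=0$ is used: complete reducibility of finite-dimensional $\Gamma$-representations lets one carry out the construction of the minimal model $\Gamma$-equivariantly (choosing $\Gamma$-stable complements and generating systems at each stage), so that $\kappa$ can be taken \emph{strictly} $\Gamma$-invariant, and likewise the formalising map $\psi$ built from the associated bigraded model; restriction of a strictly equivariant morphism to invariants is then automatically a morphism of CDGAs, and the averaging argument on cohomology above shows it remains a quasi-isomorphism. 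With this rigidification — which is exactly what the lifting machinery of Section \ref{sec:pre} together with \cite{O} provides — the preceding two paragraphs go through, and one could equally phrase the whole argument by first rigidifying to a $\Gamma$-equivariant zig-zag $(\Omega(M),d)\leftarrow(\Lambda V,d)\rightarrow(H^{*}(M),0)$ and then applying the (exact, multiplicative) functor $(-)^{\Gamma}$.
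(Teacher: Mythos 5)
Your argument is essentially the paper's own proof: both endow the minimal model $(\L V,d)$ of $M$ with the $\G$-action of Theorem \ref{thm:action-dga}, pass to the invariant CDGA $((\L V)^\G,d)$, use the averaged quasi-isomorphisms (your $\underline{\kappa}$, $\underline{\psi}$; the paper's $\uk$, $\ui$) to exhibit it both as a model of $\O(M/\G)$ and as quasi-isomorphic to its cohomology, and then lift the minimal model $(\L W,d)$ of $M/\G$ through this zig-zag to get $(\L W,d)\to (H^*(\L W,d),0)$. Your final paragraph on making the maps genuinely multiplicative (via equivariant rigidification) is a careful refinement of a point the paper passes over silently, but it does not alter the route.
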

\begin{proof}
First of all, the fact that
$(\O (M/\G),d)= (\O(M)^\G,d)$ and our previous argument ensure that $H^*(M/\G)=H^*(M)^\G$. Let $\k \colon (\L V, d) \to (\O(M),d)$ be the minimal model of $M$ as constructed in Theorem \ref{thm:action-dga}. The CDGA $((\L V)^\G, d)$ is a model for $(\O(M/\G),d)$ because of the quasi-isomorphism
 $\uk \colon ((\L V)^\G, d) \to (\O(M)^\G,d)$ defined as above. Consider $(\L W,d)$ the minimal model of $(\O(M/\G),d)$ and let  $\psi \colon (\L W, d) \to ((\L V)^\G, d) $ be a quasi isomorphism.

Being $M$ formal one can consider a quasi-isomorphism $\i \colon (\L V, d) \to (H^*(\L V,d),0)$ and define $\ui \colon ((\L V)^\G, d) \to (H^*(\L V,d)^\G,0)=(H(\L W, d),0)$, which is also a quasi-isomorphism. Then we can construct a quasi isomorphism:
$$
\ui \circ \psi  \colon (\L W, d) \to (H^*(\L W, d),0).
$$
Therefore, $M/\G$ is formal.
\end{proof}

\section{Resolution process} \label{sec:resolu}

Let $(M,\vp,g)$ be a closed $\Gtwo$ structure on a compact manifold $M$, let $\j \colon M \to M$ be a $\Gtwo$ involution and let $X=M/\j$. The singular locus of the closed $\Gtwo$ orbifold $(X, \varphi, g)$ is the set $L=\Fix(\j)$, a $3$-dimensional oriented manifold according to Lemma \ref{lem:3-dim}. This section is devoted to construct a resolution $\rho \colon \widetilde{X} \to X$ under the extra assumption that $L$ has a nowhere-vanishing closed $1$-form $\h \in \O^1(L)$.

This hypothesis yields a topological characterisation of $L$ that we now outline. Let us denote by $L_1,\dots, L_r$ the connected components of $L$; according to Tischler's Theorem \cite{T} each $L_i$ is a fibre bundle over $S^1$ with fibre a connected surface $\S_i$; that is, $L_i$ is the mapping torus of a diffeomorphism $\psi_i \in \Diff(\S_i)$:
$$
L_i = \S_i \x [0,1]/ (x,0) \sim (\psi_i(x),1).
$$
Let us denote $\rqr_i \colon \S_i \x [0,1] \to L_i$ the quotient map and $\rb_i \colon L_i \to S^1$ the bundle map; then we can suppose that $\h|_{L_i}= \rb_i^*(\h_0)$, where $\h_0$ denotes the angular form on $S^1$. 
In addition, taking into account that $L_i$ is oriented and that $H^3(L_i)\cong \{ [\a]\in H^2(\S_i) \mbox{ s.t. } \psi_i^*[\a]=[\a] \}$,
 we obtain that $\S_i$ is oriented and $\psi_i^*= \rId$ on $H^2(\S_i)$. 

The resolution process consists of replacing a neighbourhood of $L$ with a closed $\Gtwo$ manifold. The local model of the singularity is $\RR^3 \x Y$ where $Y=\CC^2/\ZZ_2$ as we discussed in section \ref{sec:pre}. The closed $\Gtwo$ manifold that we introduce is the blow-up of $\nu/\j$ at the zero section, where $\n$ denotes the normal bundle of $L$ in $M$. Its
 local model is $\RR^3 \x N$ where $N=\widetilde{\CC}^2/\ZZ_2$. This requires the choice of complex structure on $\nu/\j$ which is determined by a choice of a unit-lenght vector $V$ on $L$ by means of the expression $I(X)=V \x X$, where $\x$ is the cross-product associated to $\vp$. This vector field exists because $L$ is parallelizable, but we shall choose $V= \| \h \|^{-1} \h^\sharp$ in order to guarantee that the $\Gtwo$ form that we later define on the resolution is closed.

Before constructing a $\Gtwo$ form on the resolution we study the $O(1)$ term of $\exp^*(\vp)$ by splitting $T\n$ into an horizontal and a vertical bundle with the aid of a connection. This allows us to obtain a formula for the $O(1)$ term that resembles the standard $\Gtwo$ structure on $\RR^3 \x Y$.
Its pullback under the blow-up map has a pole at the zero section; a non-singular $\Gtwo$ structure is defined on the
resolution following the ideas we introducd in subection \ref{subsec:local} for resolving the local model. This form is not closed in general, so that we need to consider a closed approximation of it. In addition, the resolution process requires the introduction of a $1$-parameter family of closed forms; small values of the parameter guarantee that these are non-degenerate and closed to $\exp^*(\vp)$ on an annulus around $L$ after a diffeomorphism. As Remark \ref{rem:size-exc} states, the size of the exceptional divisor decreases as the parameter tends to $0$.

This section is organized as follows: in subsection \ref{subsec:split} we introduce some notations concerning the normal bundle $\nu$ of $L$ and  we understand its second order Taylor approximation $\p_2$ in subsection \ref{subsec:taylor}; this is an auxiliary construction.
In subsection \ref{subsec:local} we obtain local formulas for the $O(1)$-terms and introduce the parameter $t$; these tools allow us to perform the resolution in \ref{subsec:resol}.

\subsection{Splitting of the normal bundle} \label{subsec:split}

We now introduce some notations that we need for the resolution process.  Let $\pi \colon \nu \to L$ be the normal bundle of $L$. We consider $R>0$ such that the neighbourhood of the $0$ section $Z$, $\nu_R=\{ v_p \in \nu_p \mbox { s.t. } \|v_p\|<R \}$ is diffeomorphic to a neighbourhood $U$ of $L$ on $M$ via the exponential map. On $\nu_R$ we consider $\p=(\exp)^*\varphi$, which is a closed $\Gtwo$ form on $\nu_R$. In addition, the induced involution on $\nu$ is $d\j(v_p)=-v_p$; but we shall also denote it by $\j$.
It shall be useful to denote the dilations by $F_t \colon \nu \to \nu$, $F_t(v_p)=tv_p$. We also define the vector field over $\nu$, $\cR(v_p)=\frac{d}{dt}\Big|_{t=0} e^tv_p$.

A connection $\nabla$ on $\nu$ induces a splitting $T\nu= V\oplus H$ where $V=\ker(d\pi)\cong \pi^*\nu$ and $d\pi_{v_p} \colon H_{v_p} \to T_pL$ is an isomorphism; being $TM|_L= \nu \oplus TL$, the connection induces an isomorphism $\cT \colon T\nu \to \pi^*(TM|_L)$. The choice of $\nabla$ is made in subsection \ref{subsec:resol}.

Note that any tensor $\rT$ on $TM|_{L}$ defines a tensor on $\pi^*(TM|_L)$  because $\pi^*(TM|_L)_{v_p}= T_pM|_{L}$
. Using this we define on $\nu$:

\begin{enumerate}
\item A metric, $g_1= \cT^*(g|_{L})$; that is, $g_1$ makes $(H_{v_p},g_1)$ and $(T_pL,g)$ isometric, $H_{v_p}$ is perpendicular to $V_{v_p}$ and $V_{v_p}$ isometric to $\nu_p$. 
\item A $\Gtwo$ structure $\p_1= \cT^*(\varphi|_L)$ with $g_1$ as an associated metric.
\end{enumerate}
Of course, $\cT$ is an isometry.
These tensors are constant in the fibres in the following sense; under the identification $\widehat \cT_{v_p}=\cT^{-1}_{0_p} \circ  \cT_{v_p} \colon T_{v_p}\nu \to T_{0_p}\nu$ it holds that $\widehat  \cT_{v_p}^*(g_1)=g_1$ and $\widehat \cT_{v_p}^*(\p_1)=\p_1$. Note also that these values coincide with $\exp^*g|_{Z}$ and $\phi$ respectively since $(d\exp)|_{Z}=\mathrm{Id}$. These tensors are thus independent of $\nabla$ only on $Z$. 

We shall also denote $W^k_{i,j}= \L^i V^* \otimes \L^j H^*$ where we understand $V^*= \mathrm{Ann}(H)$ and $H^*= \mathrm{Ann}(V)$. There are $g_1$-orthogonal splittings
 $\L^k T^*\nu = \oplus_{i+j=k} W^{k}_{i,j}$ and given $\a \in \L^k T^*\n$ we denote by $[\a]_{i,j}$ the projection of $\a$ to $W_{i,j}$. 

Observe also that one can restrict each $\b \in \L^k V^*$ to the fibre $\n_p$, and the restriction $\rr_k\colon \L^k V^* \to \L^k T^*\nu$, $\rr_k(\beta)_{v_p}= \beta_{v_p}|_{\n_p}$ is an isomorphism because $T_{v_p}\nu_p=V_{v_p}$.

We now state some technical observations concerning vertical forms; proofs are computations in terms of local coordinates that we include for completeness.

\begin{remark}\label{remark-connection} Note that $H^* = \pi^* (T^*L)$ does not depend on the connection but $V^*$ does. More precisely, in local coordinates $(x_1,x_2,x_3,y_1,y_2,y_3,y_4)\in U \times \R^4$ the horizontal distribution at $(x,y)$ is generated by:
$$
\partial_{ x_i}- \sum_{j=1}^{4} A_i^j(x,y) \partial_{y_j},
$$
where $A_i^j(x,y)=\sum_{k=1}^4 {A_{i,k}^j(x)y_k}$ for some differentiable functions $A_{i,k}^j$. Then $V^*$ is generated by:
$$
\eta_j= dy_j + \sum_{i=1}^3{A_i^j(x,y) dx_i}.
$$
Note also that since $A_i^j(x,ty)=tA_i^j(x,y)$ we get that $F_t^*(\eta_i)=t\eta_i$.
\end{remark}

\begin{lemma} \label{lem:homog-tensors} The following identities hold:
\begin{enumerate}
\item $F_t^*(\p_1)= [\p_1]_{0,3} + t^2[\p_1]_{2,1}$
\item $F_t^*(g_1)= g_1|_{H\otimes H} + t^2 g_1|_{V\otimes V}$
\end{enumerate}
\end{lemma}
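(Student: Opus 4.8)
The statement to prove is Lemma~\ref{lem:homog-tensors}, describing how the dilations $F_t$ act on the fibrewise-constant tensors $\p_1=\cT^*(\vp|_L)$ and $g_1=\cT^*(g|_L)$. The strategy is to work entirely in the local coordinates $(x_1,x_2,x_3,y_1,\dots,y_4)$ introduced in Remark~\ref{remark-connection} and to track the weights of the coordinate $1$-forms under $F_t$. The key point is that $F_t(x,y)=(x,ty)$, so $F_t^*(dx_i)=dx_i$ while $F_t^*(dy_j)=t\,dy_j$; combined with the homogeneity $A_i^j(x,ty)=tA_i^j(x,y)$ already noted in the remark, this gives $F_t^*(\eta_j)=t\eta_j$ for the vertical $1$-forms $\eta_j$ generating $V^*$, whereas the horizontal $1$-forms $dx_i$ (which generate $H^*=\pi^*T^*L$ and are $F_t$-invariant) have weight $0$.

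\textbf{Step 1: decompose $\p_1$ and $g_1$ into bidegree components.} Since $\L^k T^*\nu=\bigoplus_{i+j=k}W^k_{i,j}$ with $W^k_{i,j}=\L^i V^*\otimes\L^j H^*$, write $\p_1=[\p_1]_{0,3}+[\p_1]_{1,2}+[\p_1]_{2,1}+[\p_1]_{3,0}$ and $g_1=g_1|_{H\otimes H}+g_1|_{H\otimes V}+g_1|_{V\otimes V}$ (the mixed $H\otimes V$ part and $[\p_1]_{1,2}$ etc.\ will be shown to vanish). Here I would use the crucial structural fact established earlier: $\p_1$ and $g_1$ are \emph{fibrewise constant}, i.e.\ under $\widehat\cT_{v_p}=\cT^{-1}_{0_p}\circ\cT_{v_p}$ they pull back to themselves, and on the zero section they equal $\phi$ and $\exp^*g|_Z$. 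Because $\phi$ and $g|_Z$ are the $\SU(2)$-type data on the fibre glued to the data on $L$, and because $\vp|_L$ is a volume form on the $3$-dimensional $L$ (Lemma~\ref{lem:3-dim}), $\vp|_L$ is purely horizontal and the components of $\p_1$ are forced: the $(0,3)$-part is $\pi^*(\vp|_L)$ and the only other surviving part is the $(2,1)$-part (the piece of the local model $\o_j^0\wedge v^i$ mixing two fibre directions with one base direction); the $(1,2)$ and $(3,0)$ parts vanish by the same octonionic normal form. Similarly $g_1$ splits as an orthogonal sum of its $H\otimes H$ and $V\otimes V$ parts with no mixed term, since by construction $H_{v_p}\perp V_{v_p}$.

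\textbf{Step 2: apply $F_t^*$ componentwise using the weights.} A $k$-form of bidegree $(i,j)$ is, in coordinates, a combination of wedge products of $i$ of the $\eta$'s (weight $t$ each) and $j$ of the $dx$'s (weight $1$ each), multiplied by a function of $(x,y)$; but because the tensor is fibrewise constant in the sense above, the coefficient functions do not introduce extra $y$-dependence that would change the weight — restricting to the zero section they are functions of $x$ only, and fibrewise-constancy propagates this. Hence $F_t^*$ acts on $W^k_{i,j}$ by the scalar $t^i$. Therefore $F_t^*[\p_1]_{0,3}=[\p_1]_{0,3}$ and $F_t^*[\p_1]_{2,1}=t^2[\p_1]_{2,1}$, giving (1); and $F_t^*(g_1|_{H\otimes H})=g_1|_{H\otimes H}$, $F_t^*(g_1|_{V\otimes V})=t^2 g_1|_{V\otimes V}$, giving (2).

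\textbf{Main obstacle.} The only non-routine point is Step~1: justifying that $\p_1$ has \emph{no} $(1,2)$ or $(3,0)$ component and that $g_1$ has no mixed component, i.e.\ that the bidegree decomposition of these fibrewise-constant tensors is as clean as claimed. This is where I would lean on the explicit local model: the identification $\cT\colon T\nu\to\pi^*(TM|_L)$ with $TM|_L=\nu\oplus TL$ carries the $H/V$ splitting to the $TL/\nu$ splitting, and under this identification $\vp|_L\in\L^3 T^*L$ is purely in the $TL$-factor (three base directions) — that is exactly the $(0,3)$-part — while the ``off-diagonal'' terms of the $\Gtwo$-form relating the $\nu$-factor to the $TL$-factor involve, by the $\SU(2)$-to-$\Gtwo$ formula~\eqref{eqn:su2-g2}, precisely two $\nu$-directions and one $TL$-direction, i.e.\ bidegree $(2,1)$; no $\Gtwo$-form term uses one or three $\nu$-directions once we have restricted to a purely horizontal $3$-form on $L$ plus the $\SU(2)$-data. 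For the metric the argument is immediate from the orthogonality $H_{v_p}\perp V_{v_p}$ built into the definition of $g_1$. Once these vanishings are in place, the weight bookkeeping of Step~2 finishes the proof with no real computation.
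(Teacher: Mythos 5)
Your proposal is correct and follows essentially the same route as the paper: both decompose $\p_1$ and $g_1$ into the $W_{i,j}$ bidegree components in the local coordinates of Remark \ref{remark-connection}, use the adapted normal form \eqref{eqn:su2-g2} (resting on the associativity of $TL$ from Lemma \ref{lem:3-dim}) together with $H\perp V$ to kill the $(1,2)$, $(3,0)$ and mixed metric components, observe that fibrewise constancy makes the coefficients functions of the base point only, and then apply the weights $F_t^*dx_i=dx_i$, $F_t^*\eta_j=t\eta_j$. No substantive differences to report.
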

\begin{proof}
We shall prove the first equality being the second similar. Note that $\p_1|_Z$ is a $\Gtwo$ structure whose induced metric makes $V$ perpendicular to $H$ and $H|_{Z}=TZ$; thus taking into account formula (\ref{eqn:su2-g2}) we can write in local coordinates:
$$
\p_1|_Z = f(p)dx_1\wedge dx_2 \wedge dx_3 + \sum_{i=1}^3{\sum_{j<k}{f_{ijk}(p)dx_i\wedge dy_j \wedge dy_k }}.
$$
Thus, $\p_1 = [\p_1]_{0,3} + [\p_1]_{2,1}$, where $([\p_1]_{0,3})_{v_p}= f(p)dx_1\wedge dx_2 \wedge dx_3$ and $([\p_1]_{2,1})_{v_p}= \sum_{i=1}^3{\sum_{j<k}{f_{ijk}(p)dx_i\wedge (\eta_j)_{v_p}\wedge (\eta_k)_{v_p} }}$. Therefore, $F_t^*([\p]_{0,3})= [\p]_{0,3}$ and according to the previous remark, $F_t^*[\p]_{2,1}=t^2[\p]_{2,1}$.
\end{proof}

\begin{lemma} \label{zero-section}
Let $\mu \in V^*$ be a form such that $\mu=0$ on $T\n|_Z$. Then,  $[d\m]_{1,1}=0$ and $[d\m]_{0,2}=0$ on $T\n|_Z$.
\end{lemma}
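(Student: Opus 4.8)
The statement is local and amounts to a pointwise condition along $Z$, so it suffices to verify it inside one coordinate chart of the type appearing in Remark \ref{remark-connection}. The plan is to write $\m$ out in such coordinates and keep track of bidegrees at the zero section. Fix coordinates $(x_1,x_2,x_3,y_1,\dots,y_4)$ on $U\times\R^4$, so that $H^*=\pi^*(T^*L)$ is spanned by $dx_1,dx_2,dx_3$ and $V^*$ is spanned by $\eta_j=dy_j+\sum_i A_i^j(x,y)\,dx_i$ with $A_i^j(x,y)=\sum_k A_{i,k}^j(x)\,y_k$. In particular $A_i^j(x,0)=0$, so along $Z=\{y=0\}$ one has $\eta_l=dy_l$; hence at a point $p\in Z$ the bidegree splitting assigns type $(1,0)$ to each $dy_l$ and type $(0,1)$ to each $dx_i$. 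Since the $\eta_j$ form a local frame of $V^*$, write $\m=\sum_{j=1}^4 g_j\,\eta_j$ with $g_j\in C^\infty(U\times\R^4)$.

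First I would unpack the hypothesis: at a point $p\in Z$ the covectors $(\eta_1)_p,\dots,(\eta_4)_p$ are linearly independent and, together with $(dx_1)_p,(dx_2)_p,(dx_3)_p$, form a basis of $T^*_p\n$, so $\m_p=0$ forces $g_j(x,0)=0$ for all $j$. As $g_j$ vanishes on the whole slice $\{y=0\}$, it follows that $\partial_{x_i}g_j(x,0)=0$ as well, while $\partial_{y_l}g_j(x,0)$ need not vanish. Then I would differentiate: $d\m=\sum_j dg_j\wedge\eta_j+\sum_j g_j\,d\eta_j$. The second sum vanishes at every point of $Z$ because each $g_j$ does. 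For the first sum, at $p=(x,0)\in Z$ we have $dg_j|_p=\sum_i\partial_{x_i}g_j(x,0)\,dx_i+\sum_l\partial_{y_l}g_j(x,0)\,dy_l=\sum_l\partial_{y_l}g_j(x,0)\,\eta_l|_p$, which lies in $V^*_p$, i.e.\ is of pure type $(1,0)$ at $p$. Hence $(dg_j\wedge\eta_j)|_p\in\L^2 V^*_p=(W^2_{2,0})_p$, and therefore $d\m|_p$ is of pure type $(2,0)$ for every $p\in Z$.

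This is exactly the assertion that $[d\m]_{1,1}$ and $[d\m]_{0,2}$ vanish on $T\n|_Z$; in fact $[d\m]_{2,0}|_Z=\sum_{j,l}\partial_{y_l}g_j(x,0)\,\eta_l\wedge\eta_j$ is the only surviving component. The computation presents no genuine obstacle; the two points one must not overlook are that the connection coefficients $A_i^j$ vanish along $Z$ — so the coordinate $1$-forms $dy_l$ really are the vertical generators there and the bidegree bookkeeping is the naive coordinate one — and that the vanishing of $g_j$ on the slice $\{y=0\}$ automatically kills its $x$-derivatives there, which is precisely what removes the would-be $(1,1)$-contribution of $dg_j\wedge\eta_j$.
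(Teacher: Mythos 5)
Your proof is correct and follows essentially the same route as the paper: expand $\m$ in the vertical frame $\eta_j$ in the coordinates of Remark \ref{remark-connection}, use that the coefficients vanish identically along $Z$ (hence so do their $x$-derivatives) and that $\eta_j|_{T\n|_Z}=dy_j$, and conclude that $d\m|_Z$ is purely of type $(2,0)$. The only difference is presentational — you phrase the conclusion as "$d\m|_p$ has pure bidegree $(2,0)$" rather than writing out the $(1,1)$ and $(0,2)$ components and checking they vanish, which is what the paper does.
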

\begin{proof}
In local coordinates, $\m= \sum_{i=1}^{4}{f_i(x,y)\eta_i}$ with $f_i(x,0)=0$ as  $\mu=0$ on $T\nu|_Z$. Then, 
\begin{align*}
d\m=& \sum_{i=1}^4 \sum_{j=1}^3 {\frac{\partial f_i}{\partial x_j }(x,y) dx_j \wedge \eta_i} \\
 &+ \sum_{i=1}^4 \sum_{j=1}^4 {\frac{\partial f_i}{\partial y_j }(x,y) dy_j \wedge \eta_i} + \sum_{i=1}^4 f_i(x,y)d\eta_i .
\end{align*}
Since $f_i(x,0)=0$ and $\eta_i|_{T\n|_Z}= dy_i$ the following equalities hold on $T\n|_Z$:
\begin{align*}
[d\m]_{2,0} (x,0)=& \sum_{i=1}^4 \sum_{j=1}^4 {\frac{\partial f_i}{\partial y_j }(x,0) dy_j \wedge dy_i}, \\
[d\m]_{1,1} (x,0)=&  \sum_{i=1}^4 \sum_{j=1}^3{\frac{\partial f_i}{\partial x_j} (x,0) dx_j \wedge \eta_i}= 0, \\ 
[d\m]_{0,2} (x,0)=&0.
\end{align*}
\end{proof}

\begin{lemma} \label{lem:bounded norm} Consider coordinates $(x,y)=(x_1,x_2,x_3,y_1,y_2,y_3,y_4)\in B \times \R^4$ of $\n$, with $B\subset \RR^3$ a closed ball. Let $\eta_j$ be the projection of $dy_j$ to $V^*$ as in Remark \ref{remark-connection}. Then,
$
\|(\eta_i)_{(x,0)}\|_{g_1}= \|(\eta_i)_{(x,y)}\|_{g_1}
$
and $
\|(dx_i)_{(x,0)}\|_{g_1}= \|(dx_i)_{(x,y)}\|_{g_1}
$.

There exist $C_1>0$, $C_2>0$ such that
$
\|[d\eta_i]_{0,2}\|_{g_1} \leq C_1 r 
$
and $ \|[d\eta_i]_{1,1}\|_{g_1} \leq C_2$ on $\n$.
\end{lemma}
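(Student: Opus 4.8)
The plan is to prove the two pointwise equalities first, since the two estimates for $d\eta_i$ will be deduced from them.

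\emph{Invariance of $\|\eta_i\|_{g_1}$ and $\|dx_i\|_{g_1}$ along the fibres.} In the coordinates of Remark~\ref{remark-connection}, the coframe $\{(dx_i)_{v_p},(\eta_j)_{v_p}\}$ of $T_{v_p}^{*}\nu$ is dual to the frame made of the horizontal vectors $\partial_{x_i}-\sum_j A_i^j(x,y)\,\partial_{y_j}$ ($i=1,2,3$), which span $H_{v_p}$, and the vertical vectors $\partial_{y_j}$ ($j=1,\dots,4$), which span $V_{v_p}$. Unwinding the definition of $\cT$ and using that $A_i^j(x,0)=0$, one checks that $\widehat{\cT}_{v_p}=\cT_{0_p}^{-1}\circ\cT_{v_p}\colon T_{v_p}\nu\to T_{0_p}\nu$ carries this frame to the corresponding frame at $0_p$; dually, $\widehat{\cT}_{v_p}^{*}\colon T_{0_p}^{*}\nu\to T_{v_p}^{*}\nu$ sends $(dx_i)_{0_p}\mapsto(dx_i)_{v_p}$ and $(\eta_j)_{0_p}\mapsto(\eta_j)_{v_p}$. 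Since $\widehat{\cT}_{v_p}^{*}(g_1)=g_1$, the map $\widehat{\cT}_{v_p}$ is a $g_1$-isometry of tangent spaces, hence $\widehat{\cT}_{v_p}^{*}$ is a $g_1$-isometry of cotangent spaces; therefore $\|(\eta_j)_{v_p}\|_{g_1}=\|(\eta_j)_{0_p}\|_{g_1}$ and $\|(dx_i)_{v_p}\|_{g_1}=\|(dx_i)_{0_p}\|_{g_1}$, which are the first two assertions.

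\emph{The estimates for $d\eta_i$.} I would compute $d\eta_j$ in the same coordinates. From $\eta_j=dy_j+\sum_i A_i^j(x,y)\,dx_i$ with $A_i^j(x,y)=\sum_k A_{i,k}^j(x)\,y_k$,
\begin{equation*}
d\eta_j=\sum_{i,m,k}\frac{\partial A_{i,k}^j}{\partial x_m}(x)\,y_k\,dx_m\wedge dx_i+\sum_{i,k}A_{i,k}^j(x)\,dy_k\wedge dx_i ,
\end{equation*}
and substituting $dy_k=\eta_k-\sum_{l,n}A_{l,n}^k(x)\,y_n\,dx_l$ gives $[d\eta_j]_{2,0}=0$ together with
\begin{equation*}
[d\eta_j]_{1,1}=\sum_{i,k}A_{i,k}^j(x)\,\eta_k\wedge dx_i ,
\end{equation*}
\begin{equation*}
[d\eta_j]_{0,2}=\sum_{i,m,k}\frac{\partial A_{i,k}^j}{\partial x_m}(x)\,y_k\,dx_m\wedge dx_i-\sum_{i,k,l,n}A_{i,k}^j(x)\,A_{l,n}^k(x)\,y_n\,dx_l\wedge dx_i .
\end{equation*}
The functions $A_{i,k}^j$ and $\partial A_{i,k}^j/\partial x_m$ are smooth, hence bounded on the closed ball $B$; and by the first part, together with $\|\alpha\wedge\beta\|_{g_1}\le\|\alpha\|_{g_1}\|\beta\|_{g_1}$ for $1$-forms, the functions $\|\eta_k\wedge dx_i\|_{g_1}$ and $\|dx_m\wedge dx_i\|_{g_1}$ are bounded on all of $B\times\RR^4$. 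Hence $\|[d\eta_j]_{1,1}\|_{g_1}\le C_2$ for some constant $C_2$. Since every monomial of $[d\eta_j]_{0,2}$ carries a factor $y_k$ or $y_n$ and $|y_k|\le|y|\le C\,r$ on $B$ (the fibre metric being uniformly equivalent to the Euclidean one over the compact ball), we get $\|[d\eta_j]_{0,2}\|_{g_1}\le C_1\,r$ for some constant $C_1$. Covering the compact manifold $L$ by finitely many charts of this type and taking maxima of the resulting constants then gives the estimates on all of $\nu$.

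The only genuinely delicate step is the first one — establishing that $\|\eta_i\|_{g_1}$ and $\|dx_i\|_{g_1}$ depend only on the point of $L$ — which is essentially bookkeeping of how $\cT$ intertwines the vertical/horizontal splitting, the coframe $\{\eta_i,dx_i\}$, and the metric $g_1$. Everything afterwards is a routine coordinate computation combined with compactness of $B$.
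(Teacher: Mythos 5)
Your proposal is correct and follows essentially the same route as the paper: the fibrewise constancy of $\|\eta_j\|_{g_1}$ and $\|dx_i\|_{g_1}$ via the $g_1$-isometry $\widehat{\cT}_{v_p}$, the coordinate computation of $[d\eta_j]_{1,1}$ and $[d\eta_j]_{0,2}$ (your formulas agree with the paper's), and boundedness of the $A_{i,k}^j$ and their derivatives on $B$ together with compactness of $L$ to get uniform constants. The extra detail you give on how $\widehat{\cT}_{v_p}$ matches the adapted coframes at $v_p$ and $0_p$ is exactly the content behind the paper's brief "clear" for the first two equalities.
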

\begin{proof}
The first two equalities are clear taking into account that $\cT^*(\eta_j)=\eta_j$, $\cT^*(dx_j)=dx_j$ and that $\cT$ is a $g_1$-isometry. For the third and fourth equality we first compute $d\eta_j$
$$
d\eta_j= \sum_{k=1}^4{ \sum_{i,l=1}^3{y_k \frac{\partial  A_{i,k}^j(x)}{\partial x_l}dx_l \wedge dx_i}}
+ \sum_{k=1}^4 \sum_{i=1}^3  A_{i,k}^j(x) dy_k \wedge dx_i.
$$
This implies that:
\begin{align*}
[d\eta_j]_{0,2}=& \sum_{k=1}^4{ \sum_{i,l=1}^3{y_k \frac{\partial A_{i,k}^j(x)}{\partial x_l}dx_l \wedge dx_i}} - \sum_{k,n=1}^4 \sum_{i,m=1}^3  A_{i,k}^j(x) A_{m,n}^k(x) y_n dx_m \wedge dx_i, \\
[d\eta_j]_{1,1}= & \sum_{k=1}^4 \sum_{i=1}^3  A_{i,k}^j(x) \eta_k \wedge dx_i.
\end{align*}
The functions $|A_{i,k}^j|$ are bounded on $B$, and that the $g_1$-norm of the terms $\eta_m \wedge dx_j$ and $dx_j \wedge dx_k$ are constant on the fibres as explained before. Taking into account that $L$ is compact the choice of
constants $C_1$ and $C_2$ becoms clear.
\end{proof}

\subsection{Taylor series} 
\label{subsec:taylor}  We now introduce the Taylor series of $\p$ and interpolate it with the seccond order approximation. This is an auxiliary tool for our resolution process.

Consider the dilation over the fibres $F_t \colon\nu \to \nu$, and define the Taylor series of $F_t^*\p$ and $F_t^*g$ near $t=0$ (note that $F_0^*(\p)$ and $F_0^*(g)$ are defined on $\nu$). That is,
$$
 F_t^*(\p) \sim \sum_{k=0}^{\infty}{t^{2k}\p^{2k}}, \quad F_t^*g \sim \sum_{k=0}^\infty {t^{2k}g^{2k}}.
$$
Note that we only wrote even terms because both $\p$ and $g$ are $\j$ invariant and $\j=F_{-1}$. In addition, since $F_{ts}=F_t\circ F_s$ we have that $F_s^*(\p^{2k})=s^{2k}\p^{2k}$, $F_s^*(g^{2k})=s^{2k}g^{2k}$. For $i+j=3$ and $p+q=2$ we define $\p^{2k}_{i,j}= [\p^{2k}]_{i,j}$, $g^{2k}_{p,q}|_{V^p \otimes H^q}$ ; here $V^p$ denotes the tensor product of $V$ with itself $p$ times.

We have the following properties:
\begin{enumerate}
\item $\|\p^{2k}_{i,j} \|_{g_1} = O(r^{2k-i})$, where $r$ is measured with respect to the metric on $\nu$. To check it let $\|v_p\|_{g_1}=1$; taking into account Lemma \ref{lem:homog-tensors} and the fact that $F_t \colon (\n,g_1|_{H\otimes H} +t^2g_1|_{V\otimes V}) \to (\n,g_1)$ is an isometry we get:
\begin{align*}
\| (\p^{2k}_{i,j})_{r v_p} \|_{g_1}=& \| r^{2k} F_{r^{-1}}^* (\p^{2k}_{i,j})_{rv_p} \|_{g_1} 
=
r^{2k}\|(\p^{2k}_{i,j})_{v_p}\|_{g_1|_{H\otimes H} +r^{2}g_1|_{V\otimes V}} \\
=& \, r^{2k-i}\|(\p^{2k}_{i,j})_{v_p}\|_{g_1}.
\end{align*}

\item The previous statement ensures that $\p^{2k}_{i,j}=0$ if $i>2k$.

\item If $k\geq 1$, $\p^{2k}$ is exact.

Being $\p^{2k}$ homogeneous of order $2k$, we have that $\mathcal{L}_{\cR}(\p^{2k})= 2k \p^{2k}$; where $\cR(v_p)=\frac{d}{dt}\Big|_{t=0} (e^t v_p)$ is defined as above. In addition, since $\p$ is closed we have that $d\p^{2k}=0$ for every $k$. Thus,
$2k\p^{2k}=d(i(\cR)\p^{2k})$.
\end{enumerate}

Taking these properties into account we construct a $\Gtwo$ form $\p_{3,\e}$ that interpolates $\p$  with the approximation $\p_2= \p^0 + \p^2$. The parameter $\e>0$ indicates that the interpolation occurs on $r\leq \e$ and is done in such a way that $\p_{3,\e}|_{r \leq \frac{\e}{2}}= \p_2$.
Of course, this is possible because the difference between $\p$ and $\p_2$ is small near the zero section. 

\begin{proposition} \label{prop:interpolation-order-2}
The form  $\p_2=\p^0 + \p^2$ is closed and $\p = \p_2 + O(r)$.
There exists $\e_0>0$ such that for each $\e < \e_0$ there exists a $\j$-invariant $\Gtwo$ form $\p_{3,\e}$ such that $\p_{3,\e}=\p_2$ if $r\leq \frac{\e}{2}$ and $\p_{3,\e}=\p$ if $r\geq \e$.
\end{proposition}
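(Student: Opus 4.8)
The plan is to establish the three assertions in order, with the bulk of the work being the construction of the interpolating form $\p_{3,\e}$.

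First I would verify that $\p_2 = \p^0 + \p^2$ is closed. Since $\p = (\exp)^*\vp$ is closed on $\nu_R$ and $F_t^*\p$ is closed for every $t$, the Taylor coefficients $\p^{2k}$ are all closed: indeed $F_t^*(d\p) = d(F_t^*\p) = 0$, and differentiating the asymptotic expansion $F_t^*\p \sim \sum t^{2k}\p^{2k}$ term by term gives $d\p^{2k} = 0$ for all $k$, in particular for $k=0$ and $k=1$. Next, the estimate $\p = \p_2 + O(r)$: by Taylor's theorem with remainder, $F_1^*\p - (\p^0 + \p^2)$ is a form whose components vanish to order $r^4$ in the fibre directions after the rescaling, but more directly one expands $\p - \p_2 = \sum_{k\geq 2}\p^{2k} + (\text{remainder})$ and applies property (1) of subsection \ref{subsec:taylor}: $\|\p^{2k}_{i,j}\|_{g_1} = O(r^{2k-i})$ with $i \leq 3$, so each term with $k \geq 2$ is $O(r^{2k-3}) = O(r)$, and the same bound applies to the integral-form remainder. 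Finally I must check that $\p_2$ is itself a $\Gtwo$ form near $Z$: since $\p_2 = \p + O(r)$ and $\p|_Z = \phi$ is a $\Gtwo$ form, Lemma \ref{universal} guarantees $\p_2$ is a $\Gtwo$ form on a neighbourhood of $Z$, hence on $r \leq \e_0$ for $\e_0$ small enough.

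For the interpolation, fix a smooth cutoff $\chi \colon [0,\infty) \to [0,1]$ with $\chi \equiv 1$ on $[0,1/2]$ and $\chi \equiv 0$ on $[1,\infty)$, and for $\e < \e_0$ set $\chi_\e(v_p) = \chi(r(v_p)/\e)$, a $\j$-invariant function (since $\j = F_{-1}$ preserves $r$). The naive guess $\chi_\e \p_2 + (1-\chi_\e)\p$ is generally \emph{not} closed, so instead I would interpolate at the level of primitives. By property (3) of subsection \ref{subsec:taylor}, $\p - \p_2 = \sum_{k\geq 2}\p^{2k} + (\cdots)$ is exact near $Z$; more precisely, on $r \leq \e_0$ one has $\p - \p_2 = d\beta$ for a $\j$-invariant $2$-form $\beta$ with $\|\beta\|_{g_1} = O(r^2)$ (obtained, for instance, from $\beta = \sum_{k\geq 2}\frac{1}{2k}\,i(\cR)\p^{2k}$ plus the corresponding primitive of the remainder, using that $i(\cR)\p^{2k}$ has fibre-degree at least one and vanishes to order $r^2$ by the homogeneity estimates). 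Then define
$$
\p_{3,\e} = \p - d(\chi_\e\, \beta).
$$
This is closed by construction, $\j$-invariant, equals $\p$ where $\chi_\e \equiv 0$ (i.e. $r \geq \e$), and equals $\p - d\beta = \p_2$ where $\chi_\e \equiv 1$ (i.e. $r \leq \e/2$). It remains to check it is a $\Gtwo$ form: on $r \geq \e$ it equals $\p$; on $r \leq \e/2$ it equals $\p_2$, already shown to be $\Gtwo$; on the transition annulus $\e/2 \leq r \leq \e$ we estimate $\|\p_{3,\e} - \p\|_{C^0,g} = \|d(\chi_\e\beta)\|_{C^0,g} \leq \|d\chi_\e\|\,\|\beta\| + \|\chi_\e\|\,\|d\beta\|$. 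Here $\|\beta\| = O(r^2) = O(\e^2)$ while $\|d\chi_\e\| = O(\e^{-1})$, so the first term is $O(\e)$, and $\|d\beta\| = \|\p - \p_2\| = O(r) = O(\e)$, so the second term is $O(\e)$; hence $\|\p_{3,\e} - \p\|_{C^0,g} = O(\e)$, which is $< m$ for $\e$ small by Lemma \ref{universal}. Shrinking $\e_0$ if necessary so that this holds for all $\e < \e_0$ completes the proof.

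The main obstacle is the middle step: producing a $\j$-invariant primitive $\beta$ of $\p - \p_2$ on $r \leq \e_0$ with the quadratic decay estimate $\|\beta\|_{g_1} = O(r^2)$, since this decay is exactly what makes the error term $\|d(\chi_\e\beta)\|$ go to zero with $\e$ despite the $\e^{-1}$ blow-up of $d\chi_\e$. This requires combining the homogeneity/exactness properties (1) and (3) of the Taylor coefficients from subsection \ref{subsec:taylor} with a careful treatment of the smooth remainder of the Taylor expansion, for which one uses that the remainder after the $\p^2$ term, written via the integral form of Taylor's theorem applied to $F_t^*\p$, inherits the same $O(r^2)$ bound on its $i(\cR)$-primitive. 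Everything else is routine: the closedness is automatic from the $d(\chi_\e\beta)$ construction, the matching on the two ends is immediate from the cutoff, and the $\Gtwo$ condition follows from Lemma \ref{universal} once the $C^0$-smallness estimate is in hand.
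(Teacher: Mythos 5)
Your proposal is correct and matches the paper's proof essentially step for step: closedness of $\p_2$ from closedness of the Taylor coefficients, the $O(r)$ estimate from the homogeneity bounds $\|\p^{2k}_{i,j}\|_{g_1}=O(r^{2k-i})$, a $\j$-invariant primitive $\xi$ of $\p-\p_2$ with $\|\xi\|_{g_1}=O(r^2)$, the cutoff form $\p-d(\varpi_\e\,\xi)$, and Lemma \ref{universal} to preserve the $\Gtwo$ condition on the transition annulus. The only cosmetic difference is that the paper gets the primitive in one stroke from the Poincar\'e lemma for submanifolds applied to the full difference $\p-\p_2$ (which vanishes on $Z$), namely $\xi_{v_x}=\int_0^1 i(\cR_{\tau v_x})(\p-\p_2)\,d\tau$, rather than term by term through the asymptotic series as in your parenthetical suggestion — a cleaner route to the same $O(r^2)$ estimate, but the same idea.
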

\begin{proof}
The first part is a consequence of the previous remark; zero order terms are $\p^0=\p^0_{0,3}$ and $\p^{2}_{2,1}$, thus $\p=\p_2 + O(r)$. In addition, $\p_2$ is closed because each $\p^{2k}$ is. 

Since $\p|_Z=\p_2|_Z$ Poincar\'e Lemma for submanifolds ensures that $\p=\p_2 + d\xi$ for some $\j$-invariant 2-form $\xi$; more precisely,
$
\xi_{v_x}= \int_{0}^1{i(\mathcal{R}_{\t v_x})(\p - \p_2)d\t}.
$
In addition, $\|\xi\|_{g_1}=O(r^2)$ because $\|\p - \p_2\|_{g_1}=O(r)$. Let $\varpi$ be a smooth function such that $\varpi=1$ if $x\leq \frac{1}{2}$ and $\varpi=0$ if $x\geq 1$ and define $\varpi_\e(x)=\varpi(\frac{x}{\e})$. Then, $|\varpi_\e'| \leq \frac{C}{\e}$ so that 
$$ 
\p_{3,\e}= \p +  d(\varpi_\e(r) \xi) 
$$
is a $\Gtwo$ form on $r\leq \e$ if $\e$ is small enough because it is $O(\e)$-near $\p$. The form $\p_{3,\e}$ interpolates $\p_2$ with $\p$ over the stated domains and it is $\j$-invariant because both $\p$ and $\varpi_\e(r) \xi$ are.
\end{proof}


\subsection{Local formulas} \label{subsec:local}
The purpose of this section is making an additional preparation; we first provide a local formula for $\p_1$ that will be useful in order to construct the $\Gtwo$ form of the resolution. Later
we change $\p_2$ by $O(r)$ terms so that we control its local formula and we introduce the parameter $t$; these preparations are essential to construct a closed $\Gtwo$ form on the resolution.

\subsubsection{Formula for $\p_1$} 

We first write $\p_1$ and $g_1$ in terms of the components of the Taylor series of $g$ and $\p$. This  is an easy consequence of the homogeneus behaviour of the tensors involved:
\begin{lemma} \label{lem:homog}
The following equalities hold:
\begin{enumerate}
\item $\p_1 = \p^0 + \p^2_{2,1} $
\item $ g_1 = g_{0,2}^0 + g_{2,0}^2 $
\end{enumerate}
\end{lemma}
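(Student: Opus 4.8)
The plan is to unpack the two Taylor series of $\p$ and $g$ and compare their homogeneous pieces with the definitions $\p_1 = \cT^*(\vp|_L)$ and $g_1 = \cT^*(g|_L)$ given in subsection \ref{subsec:split}. The key observation is that $\p_1$ and $g_1$ are ``constant on the fibres'' in the precise sense recorded after their definition: under the identification $\widehat{\cT}_{v_p} = \cT_{0_p}^{-1}\circ\cT_{v_p}$ one has $\widehat{\cT}_{v_p}^*(\p_1) = \p_1$ and $\widehat{\cT}_{v_p}^*(g_1) = g_1$, and these tensors agree with $\phi$ and $\exp^* g|_Z$ on the zero section. So $\p_1$ is the unique $\Gtwo$ form on $\nu$ whose restriction to $Z$ is $\phi = \p|_Z$ and which is parallel along the fibres in this sense; similarly for $g_1$.

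First I would compute the behaviour of $\p_1$ under the dilations $F_t$. By Lemma \ref{lem:homog-tensors}(1) we have $F_t^*(\p_1) = [\p_1]_{0,3} + t^2[\p_1]_{2,1}$, so the Taylor expansion of $F_t^*\p_1$ in $t$ has only a degree-$0$ term $[\p_1]_{0,3}$ and a degree-$2$ term $[\p_1]_{2,1}$, and all higher terms vanish. On the other hand, since $\p_1$ agrees with $\p$ to first order at $Z$ — more precisely, since $(d\exp)|_Z = \mathrm{Id}$ and $\p_1|_Z = \phi = \p|_Z$ — the coefficients of the Taylor series of $F_t^*\p_1$ at $t=0$ must coincide with the corresponding coefficients $\p^{0}$, $\p^{2}$ of the Taylor series of $F_t^*\p$ in the bidegrees in which $\p_1$ is supported. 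Concretely: the $t^0$-coefficient of $F_t^*\p$ is $\p^0$, which by the remarks in subsection \ref{subsec:taylor} equals $\p^0_{0,3}$ (being homogeneous of order $0$, it can carry no vertical legs), and this must equal $[\p_1]_{0,3}$; the $t^2$-coefficient of $F_t^*\p_1$ is $[\p_1]_{2,1}$, and I must identify it with the $(2,1)$-part $\p^2_{2,1}$ of the $t^2$-coefficient $\p^2$ of $F_t^*\p$. The cleanest way to see this last identification is to note that $\p_1$ and $\p_2 = \p^0 + \p^2$ have the same restriction to $T\nu|_Z$ (both equal $\phi$), the same bidegree-$(0,3)$ part everywhere (just shown), and $\p^2_{i,j} = 0$ for $i \neq 2$ by property (2) of subsection \ref{subsec:taylor} together with the fact that $\p^2$ has total degree $3$; and then observe that the bidegree-$(2,1)$ part of a $\Gtwo$ form constant along the fibres is determined by its value on $Z$ by the fibrewise-constancy property. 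Hence $\p_1 = \p^0 + \p^2_{2,1}$.

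The argument for $g_1$ is identical, using Lemma \ref{lem:homog-tensors}(2): $F_t^*(g_1) = g_1|_{H\otimes H} + t^2 g_1|_{V\otimes V}$, so the Taylor series of $F_t^*g_1$ has $t^0$-coefficient $g_1|_{H\otimes H}$ and $t^2$-coefficient $g_1|_{V\otimes V}$, and matching with the Taylor series $F_t^*g \sim \sum t^{2k}g^{2k}$ together with the homogeneity bound ($g^0$ lives in $H\otimes H$, i.e. $g^0 = g^0_{0,2}$, and $g^2_{2,0}$ is the vertical part of $g^2$) yields $g_1 = g^0_{0,2} + g^2_{2,0}$. I expect the main obstacle to be phrased carefully rather than computational: one must be precise about why the fibrewise-constant tensors $\p_1$, $g_1$ are recovered from finitely many terms of the Taylor series — i.e., why no correction from $\p^{2k}$ with $k\geq 2$ enters — and the right way to do this is exactly the $F_t$-homogeneity computation above, which forces the Taylor series of $F_t^*\p_1$ to truncate. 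Everything else is bookkeeping of bidegrees.
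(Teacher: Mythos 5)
Your overall route is the paper's: exploit the scaling of everything under $F_t$ (Lemma \ref{lem:homog-tensors}) and the fact that $\p_1$ and $\p$ agree on $T\n|_Z$, then match low-order Taylor coefficients. The identification $[\p_1]_{0,3}=\p^0$ is fine, since the $t^0$-coefficient of $F_t^*\p_1$ is $F_0^*\p_1=F_0^*\p=\p^0$, which only uses the restriction to $Z$. The problem is in the step $[\p_1]_{2,1}=\p^2_{2,1}$, where you make a false intermediate claim and omit the fact that actually carries the argument. The claim that $\p^2_{i,j}=0$ for $i\neq 2$ does not follow from property (2) of subsection \ref{subsec:taylor}: that property only gives $\p^{2k}_{i,j}=0$ for $i>2k$, i.e.\ it kills $\p^2_{3,0}$. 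The components $\p^2_{1,2}$ and $\p^2_{0,3}$ are merely $O(r)$ and $O(r^2)$, so they vanish along $Z$ but not identically; indeed the later subsection on changing $\p_2$ by $O(r)$ terms writes $\p^2=\p^2_{2,1}+\p^2_{1,2}+\p^2_{0,3}$ and constructs $\xi$ with $\p^2=\b+d\xi$ precisely because these components are nonzero. What you need, and what is true, is only their vanishing on $Z$, which gives $[\p_1]_{2,1}|_{T\n|_Z}=\p^2_{2,1}|_{T\n|_Z}$.

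More substantively, knowing that $[\p_1]_{2,1}$ is constant along the fibres and agrees with $\p^2_{2,1}$ on $Z$ does not yet give equality on all of $\n$: you must also know that $\p^2_{2,1}$ itself is constant along the fibres, and this is exactly the computation in the paper's proof. Writing $\p^2_{2,1}=\sum f_{ijk}(x,y)\,dx_i\wedge\eta_j\wedge\eta_k$, the homogeneity $F_t^*\p^2_{2,1}=t^2\p^2_{2,1}$ combined with $F_t^*\eta_j=t\eta_j$ forces $f_{ijk}(x,ty)=f_{ijk}(x,y)$, hence $f_{ijk}(x,y)=f_{ijk}(x,0)$. Equivalently, your assertion that the Taylor coefficients of $F_t^*\p_1$ ``must coincide'' with those of $F_t^*\p$ is not automatic from $\p_1|_Z=\p|_Z$: the full $t^2$-coefficient $\p^2$ depends on the $2$-jet of $\p$ along $Z$, and only its $(2,1)$-part is zeroth-order data, which is what the fibrewise-constancy computation establishes. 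The same point is needed for the metric statement, where $g^0_{0,2}$ and $g^2_{2,0}$ must be shown constant along the fibres before matching with $g_1$. Once these two repairs are made, your argument coincides with the paper's.
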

\begin{proof}
We prove the first equality, being the second similar.
Using the fact that $\p^0=\p^0_{0,3}$ and $\p^2_{2,1}$ are homogeneous one can check that these are constant on the fibres. We shall do it for $\p^2_{2,1}$, write in local coordinates $(x,y)$:
$$
\p^2_{2,1}=\sum_{i=1}^3{\sum_{j<k}f_{ijk}(x,y)dx_i\wedge (\eta_j)_{(x,y)}\wedge (\eta_k)_{(x,y)} }.
$$
Taking into account that $F_t^*\p^2_{2,1}=t^2\p^2_{2,1}$ and $F_t^*\eta_i=t\eta_i$ we get $f_{ijk}(x,ty)=f_{ijk}(x,y)$. Therefore, $f_{ijk}(x,y)=f_{ijk}(x,0)$. Since $\p_1|_{TM|_Z}= \p|_{TM|_Z}= (\p^0 + \p^2_{2,1})|_{TM|_Z}$, we obtain that $[\p_1]_{0,3}|_{T\n|_Z}=\p^0|_{T\n|_Z}$ and $[\p_1]_{2,1}|_{T\n|_Z}=\p^2_{2,1}|_{T\n|_Z}$. But these forms are constant on the fibres of the bundle $T\nu \to \nu$, so that the previous equalities hold on $T\n$.
\end{proof}

We now obtain a local formula for $\p_1$. For that purpose let us define  $e_1=\|\h\|^{-1} \h$ and consider an orthonormal oriented frame $(e_1,e_2,e_3)$ of $TL$ on a neighbourhood $U\subset L$. Define also the  $\SU(2)$  structure $(\o_1^L,\o_2^L,\o_3^L)$ on $\nu$ by means of the equality:
$$
\varphi|_{L}= e_1\wedge e_2 \wedge e_3 + e_1\wedge \o_1^L + e_2 \wedge \o_2^L - e_3 \wedge \o_3^L.
$$
More precisely, the complex structure is determined by $\omega_1^L=i(e_1^\sharp)\varphi|_{\n}$, that is $I(X)=e_1^\sharp \times X$ where $\times$ denotes the vector product associated to $\varphi|_L$. The complex volume form is $\o_2^L + i\o_3^L$; note that a counterclockwise rotation of angle $\s$ in the plane $(e_2,e_3)$ changes $\o_2^L + i\o_3^L$ by the complex phase $e^{i\s}$. Using $\cT$ we obtain:
$$
\p_1= \pi^*e_1\wedge \pi^*e_2 \wedge \pi^*e_3 + \pi^*e_1\wedge \o_1 + \pi^* e_2 \wedge \o_2 - \pi^* e_3 \wedge \o_3,
$$
where the forms $\o_j \in \L^2 V^*$ are $\j$-invariant and verify $\o_j|_{Z}= \exp^*(\o_j^L)$. Fixed $p \in L$, $(\o_1 |_{\n_p}, \o_2 |_{\n_p}, \o_3 |_{\n_p})$ determines an $\SU(2)$ structure on the $4$-manifold $\nu_p$ because the restriction $\rr_2$ is an isomorphism. The associated metric on $T\nu_p$ is $g_1|_{\nu_p}$ and the complex form is induced by $I$ on $\nu$ under the canonical isomorphism.

Therefore,
$\o_1 |_{\n_p} = -\frac{1}{4}d_{\n_p}( I[dr^2]_{\n_p})$. In addition, since the complex volume form is $dz_1 \wedge dz_2 = \frac{1}{2}d(z_1dz_2 -z_2dz_1)$ there is a $\j$-invariant $1$-form $\mu \in V^*$ such that $d_{\nu_p}(\mu|_{\n_p})= (\o_2 + i \o_3)|_{\n_p}$ and $\m|_{T\n|_{Z}}=0$. We decompose it as $\m = \m_1 + i \m_2$. 

Being the restriction to the fibre $\rr_2$ a monomorphism, we obtain
$$
\o_1= -\frac{1}{4}[d[Idr^2]_{1,0}]_{2,0}, \qquad \o_2 + i\o_3 = [d\m]_{2,0},
$$
here we also denoted by $I$ the complex structure on $V^*$ determined by the complex structure $I(X)=e_1^\sharp \times X$ on $V=\pi^*(\nu)$, this depends on the splitting. Observe that the complex structure $I$ on $\n$ verifies $\j \circ I = I \circ \j$ and thus, the complex structure on $V^*$ verifies $\j I\a= I \j\a$. In particular, $I\a$ is $\j$-invariant if $\a$ is.

\subsubsection{Changing $\p_2$ by $O(r)$ terms.} 

First of all define the $1$-parameter family 
$$ 
\p_2^t = \p^0 + t^2 \p^2= F_t^*(\p_2).
$$
These forms are well-defined on $\nu$ because $\p^0$ and $\p^2$ are homogeneous.
We now change this $1$-parameter family by $O(r)$ terms so that we have an explicit local formula for it.
Consider the exact $\j$-invariant form: 
$$
\b = -\frac{1}{4} \pi^*\h \wedge d( (\|\h\|^{-1}\circ \pi) I[dr^2]_{1,0}) + d(\pi^*e_2\wedge \mu_2 - \pi^*e_3\wedge \mu_3 ) \in W_{2,1}\oplus W_{1,2}\oplus W_{0,3},
$$ 
and note that $\p_1= \pi^*(e_1\wedge e_2 \wedge e_3) + [\b]_{2,1}$. In addition, $\b$ does not depend on the orthonormal oriented basis $(e_2,e_3)$ of $\la \h^*\ra^\perp$.

We now introduce a $1$-parameter family of closed $\j$-invariant forms:
$$
\widehat{\p}_2^t = \pi^*(e_1\wedge e_2 \wedge e_3) + t^2[\b].
$$ 
We claim that fixed $s>0$ there exists $t_s>0$ such that $\widehat \p_2^t$ is a $\Gtwo$ form on $\n_{2s}$ if $t<t_s$ .  To check this we compare $\widehat \p_{2}^t$ with $F_t^*\p_1$ and use Lemma \ref{universal} to conclude. Denote $g_t=F_t^*(g_1)$ and observe that Lemma \ref{lem:homog} implies that $F_t^*\p_1= \p^0 + t^2 \p^2_{2,1}$ and $g_t= t^2 g_{2,0} + g_{0,2}$, then:
$$
\| F_t^*\p_1 - \widehat \p_2^t \|_{g_t} = t \| [\b]_{1,2}\|_{g_1} + t^2\| [\b]_{0,3} \|_{g_1},
$$
so one can bound  $\| [\b]_{1,2}\|_{g_1}$, $\| [\b]_{0,3} \|_{g_1}$ on $\n_{2s}$ and chose $t_s>0$ such that for each $t<t_s$, $t \| [\b]_{1,2}\|_{g_1} + t^2\| [\b]_{0,3} \|_{g_1} <m$ where $m$ is the universal constant obtained in Lemma \ref{universal}. 

We construct a $\Gtwo$ form $\p_{3,s}^t$ that interpolates $F_t^*\p$ with  $\p_2^t$. The parameter $s>0$ indicates that the interpolation occurs on the disk $r\leq s$ and we require that $\p_{3,\e}|_{r \leq \frac{s}{2}}= \p_2$. In subsection \ref{subsec:resol} we employ large values of the parameter.

\begin{proposition}
There is $\xi \in W_{0,2}$ such that $\|\xi\|_{g_1}=O(r^2)$ and $\p^2 = \b +d\xi$.

Fixed $s>0$ there exists $t_s'>0$ such that for each $t<t_s'$, there is a closed $\j$-invariant $\Gtwo$  form $\widehat \p_{3,s}^t$ on $\n_{2s}$ that coincides with $\widehat \p_2^t$ on $r\leq \frac{s}{2}$ and $\p_2^t$ on $r\geq s$. 
\end{proposition}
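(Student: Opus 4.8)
The plan is to prove the two assertions in turn; the first is the technical core, the second a cut-off argument built on it.

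\emph{First assertion.} Both $\p^2$ and $\b$ are closed --- $\p^2$ because it is exact, being $2\p^2=d(i(\cR)\p^2)$ by its homogeneity together with $d\p=0$, and $\b$ because $\pi^*\h$ is closed and the remaining summand of $\b$ is manifestly exact --- and they have the same $(2,1)$-component: Lemma \ref{lem:homog} gives $\p_1=\p^0+\p^2_{2,1}$ with $\p^0=\pi^*(e_1\wedge e_2\wedge e_3)$ purely horizontal, while by construction $\p_1=\pi^*(e_1\wedge e_2\wedge e_3)+[\b]_{2,1}$, so $[\p^2]_{2,1}=[\b]_{2,1}$. Hence $\z:=\p^2-\b$ is closed, $\j$-invariant, and lies in $W_{1,2}\oplus W_{0,3}$: it has no $(3,0)$-part since $\p^2_{3,0}=0$ and $\b\in W_{2,1}\oplus W_{1,2}\oplus W_{0,3}$ by construction. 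Moreover $\z$ vanishes along the zero section $Z$: by the homogeneity of $\p^2$ one has $\|\p^2_{1,2}\|_{g_1}=O(r)$ and $\|\p^2_{0,3}\|_{g_1}=O(r^2)$, while $[\b]_{1,2}|_Z=0$ and $[\b]_{0,3}|_Z=0$ because on $Z$ the exterior derivatives of the vertical $1$-forms $(\|\h\|^{-1}\circ\pi)I[dr^2]_{1,0}$ and of the real and imaginary parts of $\mu$ --- all of which vanish on $T\n|_Z$ --- reduce to their $(2,0)$-parts by Lemma \ref{zero-section}, so that after wedging with the horizontal factors $\pi^*\h$, $\pi^*e_2$, $\pi^*e_3$ the whole of $\b$ sits in $W_{2,1}$ along $Z$. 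Now run the fibre-dilation homotopy: since the $F_\t$ are generated for $\t\in(0,1]$ by $\t^{-1}\cR$ and $\cR$ is vertical, the form
\[
\xi:=\int_0^1 \t^{-1}\,F_\t^*\!\bigl(i(\cR)\z\bigr)\,d\t
\]
is well-defined (the integrand has $g_1$-norm $O(\t r^2)$, using $\|\cR_w\|_{g_1}=r(w)$ and $\|[\z]_{1,2}\|_{g_1}=O(r)$), satisfies $d\xi=\z$ (using $d\z=0$ and $F_0^*\z=0$), and lies in $W_{0,2}$ because $i(\cR)$ annihilates the purely horizontal part $[\z]_{0,3}$ and maps $W_{1,2}$ into $W_{0,2}$, while $F_\t^*$ preserves the bidegree; it is $\j$-invariant because $\cR$ and the $F_\t$ commute with $\j$. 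The same bound on the integrand gives $\|\xi\|_{g_1}=O(r^2)$. (Equivalently, once $\mu$ is chosen homogeneous the whole of $\b$ is homogeneous of degree $2$, whence $\cL_{\cR}\z=2\z$ and one may simply take $\xi=\tfrac12 i(\cR)\z$.)

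\emph{Second assertion.} By the first assertion $\p_2^t=\p^0+t^2\p^2=\p^0+t^2\b+d(t^2\xi)=\widehat\p_2^t+d(t^2\xi)$. Fix a smooth $\chi\colon[0,\infty)\to[0,1]$ with $\chi\equiv0$ on $[0,\tfrac12]$ and $\chi\equiv1$ on $[1,\infty)$, put $\chi_s(x)=\chi(x/s)$, and set
\[
\widehat\p_{3,s}^t=\widehat\p_2^t+d\bigl(\chi_s(r)\,t^2\xi\bigr).
\]
This form is closed (a closed form plus an exact one), $\j$-invariant (all ingredients are), equals $\widehat\p_2^t$ on $r\le s/2$, and equals $\widehat\p_2^t+d(t^2\xi)=\p_2^t$ on $r\ge s$; it remains to check it is a $\Gtwo$ form on $\n_{2s}$ for small $t$, which I would do with Lemma \ref{universal} relative to the $\Gtwo$ structure $(F_t^*\p_1,g_t)$, genuine since $F_t$ is a diffeomorphism for $t>0$. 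Writing
\[
\widehat\p_{3,s}^t-F_t^*\p_1=(\widehat\p_2^t-F_t^*\p_1)+t^2\chi_s'(r)\,dr\wedge\xi+t^2\chi_s(r)\,d\xi,
\]
the first term has $g_t$-norm $t\|[\b]_{1,2}\|_{g_1}+t^2\|[\b]_{0,3}\|_{g_1}$ on $\n_{2s}$, exactly as recorded just before the proposition; for the other two I would use that $g_t$ rescales $W_{i,j}$ by $t^{-i}$, together with $\|\xi\|_{g_1}=O(r^2)$, $\|d\xi\|_{g_1}=\|\p^2-\b\|_{g_1}=O(r)$, the bounds $|\chi_s'|\le C/s$ with $\chi_s'$ supported in $[s/2,s]$, and $dr=[dr]_{1,0}+[dr]_{0,1}$ with $\|[dr]_{1,0}\|_{g_1}=O(1)$, $\|[dr]_{0,1}\|_{g_1}=O(r)$; on $\n_{2s}$ this bounds both terms by $C(ts+t^2s^2)$. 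Hence $\|\widehat\p_{3,s}^t-F_t^*\p_1\|_{C^0,g_t}\to0$ as $t\to0$, so there is $t_s'>0$ with this norm $<m$ for $t<t_s'$, and Lemma \ref{universal} finishes the proof.

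\emph{Main obstacle.} The substance is in the first assertion: producing the correcting $2$-form $\xi$ in exactly the bidegree $W_{0,2}$ and with the decay $\|\xi\|_{g_1}=O(r^2)$. Everything there hinges on the coincidence of the $(2,1)$-components of $\p^2$ and $\b$ and on the vanishing of the remaining components of $\b$ along $Z$ --- that is where Lemma \ref{zero-section} and the homogeneity of the Taylor terms do the work --- after which the norm bookkeeping in the second assertion is routine, provided one keeps track of the $t^{-i}$-rescaling of the bigraded pieces $W_{i,j}$ by the degenerating metric $g_t$.
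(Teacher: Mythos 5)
Your proposal is correct and follows essentially the same route as the paper: you identify $\p^2_{2,1}=[\b]_{2,1}$ via Lemma \ref{lem:homog} and the local formula for $\p_1$, produce $\xi\in W_{0,2}$ with $\|\xi\|_{g_1}=O(r^2)$ by the radial Poincar\'e homotopy (using Lemma \ref{zero-section} and homogeneity to get vanishing of $\p^2-\b$ along $Z$), and then verify the cut-off form is $\Gtwo$ by comparing with $F_t^*\p_1$ in the $g_t$-norm, using the $t^{-i}$ scaling of $W_{i,j}$ and Lemma \ref{universal}, exactly as in the paper. Your only deviation is that your bump function is oriented so that the interpolation matches the statement ($\widehat\p_{3,s}^t=\widehat\p_2^t$ on $r\le s/2$ and $=\p_2^t$ on $r\ge s$), which is in fact the convention needed later in Theorem \ref{theo:resol}; the paper's written choice of $\varpi$ has the two regions swapped and appears to be a typo.
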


\begin{proof}
Write the second term of the Taylor series of $\p$ as $\p^2= \p^2_{2,1}+ \p^2_{1,2} + \p^2_{0,3}$ and note that $\p^2_{2,1}=[\b]_{2,1}$. Being  $\b$ and $\p^2$ closed, we obtain $ d (\p^2_{1,2} + \p^2_{0,3}) =  d([\b]_{1,2} + [\b]_{0,3})$. Poincar\'e Lemma ensures that $\p^2_{1,2} + \p^2_{0,3} =[\b]_{1,2} + [\b]_{0,3} + d\xi $ with 
$$
 \xi_{v_x} = \int_{0}^1{i(\cR_{\t v_x})(\p^2_{1,2} + \p^2_{0,3} -[\b]_{1,2} - [\b]_{0,3})d\t} = \int_{0}^1 {i(\cR_{\t v_x})(\p^2_{1,2}  -[\b]_{1,2})d\t}.
$$
 Hence $\xi \in W_{0,2}$. One can check that $\xi$ is $\j$-invariant by taking into account that $\p^2_{1,2}  -[\b]_{1,2}$ is $\j$-invariant and that $\cR_{t\j(v_x)}=\j(\cR_{tv_x})$. 
 
  In addition $\|\xi\|_{g_1}=O(r^2)$ because $\p^2_{1,2} + \p^2_{0,3}|_{Z}=0$ (these terms are $O(r)$ and $O(r^2)$ in the $g_1$-norm) and $([\b]_{1,2} + [\b]_{0,3})|_Z=0$ according to Lemma \ref{zero-section}. 

Let $\varpi$ be a smooth function such that $\varpi=1$ if $x\leq \frac{1}{2}$ and $\varpi=0$ if $x\geq 1$, and let $\varpi_s(x)=\varpi(\frac{x}{s})$. The form
$\widehat \p_{3,s}^t = \p^0 + t^2 \b +  t^2 d(\varpi_s(r) \xi)$
is  closed and $\j$-invariant; it coincides with $\widehat \p_2^t$ on $r\leq \frac{s}{2}$ and with $\p_2^t$ on $r\geq s$. 

It is clear that $\widehat{\p}_{3,s}^t$ is a $\Gtwo$ form on the region $r\geq s$ for $t<t_s$; we now check that it is also a $\Gtwo$ form on $r\leq s$ for some choice of $t$. We are going to compare $\widehat{\p}{}^t_{3,\e}$ with $F_t^*\p_1$ and use Lemma \ref{universal} to conclude the result.

Since $\varpi_s \xi \in W_{0,2}$ we have that $d(\varpi_s \xi) \in  W_{1,2}\oplus W_{0,3}$. As a consequence $\|t^2 d(\varpi_s(r) \xi) \|_{g_t} \leq t \| d(\varpi_s(r) \xi) \|_{g_1}= t(O(r^2s^{-1}) + O(r))$ so that:
\begin{align*}
\| \widehat \p_{3,s}^t - F_t^*\p_1\|_{g_t} =& \,
t (\| [\b]_{1,2}\|_{g_1} + t\| [\b]_{0,3} \|_{g_1} + O(r^2s^{-1}) + O(r)) \\
\leq & \,  t (\| [\b]_{1,2}\|_{g_1} + \| [\b]_{0,3} \|_{g_1} + O(r)).
\end{align*}
For the last equality we used that $t<1$ and that $r\leq 2s$.
Then $\widehat \p_{3,s}^t$ is a $\Gtwo$ form if the parameter $t<t_s$ verifies 
$$
t \, ( \mathrm{max}_{r\leq 2s}(\| [\b]_{1,2}\|_{g_1} + \| [\b]_{0,3} \|_{g_1} + O(r)) < m
$$
where $m$ is the constant provided by Lemma \ref{universal}.
\end{proof}

\subsection{Resolution of $\nu/\j$}\label{subsec:resol}

The resolution process is inspired in the hyperK\"ahler resolution $N= \widetilde{\CC^2}/\Z_2$ of $Y= \CC^2/\Z_2$ described in subsection \ref{subsec:resol}. Consider the blow-up map $\chi_0 \colon N \to Y$ and the hyperK\" ahler structure $(\widehat \o_1^a, \chi_0^*(\o_2^0), \chi_0^*(\o_3^0))$ on $N$. Recall that $\widehat \o_1^a$ denotes the extension of $ -\frac{1}{4} dId\ssf_a(r_0)$, where $r_0$ is the radial function on $\CC^2$ and:
$$
\ssf_a(x)=  \sg_a(x) + 2a\log(x), \qquad  \sg_a(x)=(x^4 + a^2)^{1/2}- a \log((x^4 + a^2)^{1/2} + a).
$$ 

We now focus in the resolution of $\n/\j$. For that purpose, consider the complex structure $I$ on $\n$ determined by the $2$-form $i(e_1^\sharp)\vp|_\n$ and define $P$ as the fiberwise blow-up of $\nu/\j$ at $0$. That is $P= P_{\UU(2)}(\nu)\times_{\UU(2)} N$, where $P_{\UU(2)}(\nu)$ denotes the principal $\UU(2)$-bundle associated to $\n$. This construction yields projections $\chi \colon P \to \nu/\j$  and $\mathrm{pr}= \bar{\pi} \circ \chi$; where $\bar{\pi}$ denotes the map that $\pi \colon \nu \to L$ induces.

 We also define $Q=\chi^{-1}(0)$; this is a $\CP^1$ bundle over $L$ that can be expressed as $Q=P_{\UU(2)}(\n)\times_{\UU(2)} \CP^1$. Note that there is a projection $\s_0 \colon N \to \CP^1$ that induces a complex line bundle $\s \colon P \to Q$.

A $\j$-invariant tensor on $\nu$ descends to $\nu/\j$ and its pullback by $\chi$ is smooth over $P-Q$, but it may not be smooth on $P$. If the tensor preserves the complex structure $I$ on $P$ then the pullback is smooth on $P$  because $P= P_{U(2)}(\nu)\times_{\UU(2)}  N$.
We choose $\nabla$ such that $\nabla I=0$, so that we can lift $\nabla$ to $P$ and define $TP=V'\oplus H'$; this is compatible with the splitting $T\nu=V\oplus H$. In addition, $\mu_2,\m_3,\omega_1,\omega_2,\omega_3$ induce forms on $\nu/ \j$ and $\chi^*(\mu_k)$, $\chi^*(\omega_k)$ are smooth for $k=\{2,3\}$. We shall also consider $\L^k T^*P= \oplus_{i+j=k} \L^i V' \otimes \L^j H'$ and $[\a]=\sum_{i,j}[\a]_{i,j}$.

In order to define a $\Gtwo$ structure on $P$ we need to find a resolution of $\o_1$. For that purpose denote by $r$ the pullback of the radial function on $\nu$ and define:
$$
\widehat \o_1 =  -\frac{1}{4}d (|\h|^{-1}  I[d\ssf_{|\h|}(r)]_{1,0}),
$$
where $|\h|= \| \h \| \circ \mathrm{pr}$.
 Observe that $\sg_{|\h|}(r)$ is smooth on $P$ because $r^4$ is. In addition, $-\frac{1}{2} dI[d(\log(r^2))]_{1,0}= \s^*(F_Q)$ on $P-Q$, where $F_Q$ is the curvature of the line bundle $\s \colon P \to Q$. Fiberwise it coincides with the Fubini-Study form on $\CP^1$. Note also that $\mathrm{pr}^* \h \wedge [\widehat{\o}_1]_{2,0} = -\frac{1}{4} e_1 \wedge [d(I[d\ssf_{|\h|}]_{1,0})]_{2,0}$.
 
We now define a $\Gtwo$ form $\P_1^t$ which is near $\chi^*(F_t^*\p_1)$ on $r>1$, this is:
$$
\P_{1}^t = \mathrm{pr}^*(e_1\wedge e_2 \wedge e_3) +  t^2[\widehat \b]_{2,1},
$$
where 
$$
  \widehat \b = \mathrm{pr}^* \h \wedge \widehat\o_1 + d(\mathrm{pr}^*e_2 \wedge \chi^*(\m_2) - \mathrm{pr}^*e_3 \wedge \chi^*(\m_3)).
$$
Observe that $\b$ does not depend on the orthonormal oriented basis $(e_2,e_3)$ of $\la \h^* \ra^\perp$.  In addition, the metric induced by $\P_1^1$ on $TP$ has the form $h_1= h_{2,0} + h_{0,2}$ where $h_{2,0}$ and $h_{0,2}$ are metrics on $V'$ and $H'$ respectively. In addition, the metric that $\P_1^t$ induces is $h_t= t^2 h_{2,0} + h_{0,2}$.
We define a family of closed forms:
$$
\P_2^{t}=\mathrm{pr}^*( e_1\wedge e_2 \wedge e_3) + t^2 \widehat \b.
$$
Note that $\P_2^t$ is a $\Gtwo$ structure on $\nu_{2s}$ for some $t<t_s''$. This is ensured by Lemma \ref{universal} because:
$$
\| \P_2^t - \P_1^t \|_{h_t} = t \|[\widehat \b]_{1,2}\|_{h_1} + t^2\|[\widehat \b]_{0,3}\|_{h_1},
$$
and one can bound $\|[\widehat \b]_{1,2}\|_{h_1}$ and $\|[\widehat \b]_{0,3}\|_{h_1}$ on $\chi^{-1}(\n_{2s})$.

The parameter $t$ is devoted to compensate errors introduced by $\|[\widehat \b]_{1,2}\|_{h_1}$ and  $\|[\widehat \b]_{0,3}\|_{h_1}$ that mainly come from the terms $[F_Q]_{1,1}$ and $[F_Q]_{0,2}$, which are zero if and only if the curvature is vertical. 
Lemma \ref{lem:Q-trivial} states that the bundle $Q$ is trivial, $Q=L\x \CP^1$. It might not happen that $P$ is the pullback of $N$ via the projection map $L \x \CP^1 \to \CP^1$; in the case that it is, then $F_Q \in \L^2 V'$.

\begin{proposition} \label{prop:inter-resol}
There exist $s_0>1$, such that for each $s>s_0$ one can find $t_s'''$ such that for each $t<t_s'''$ there is  a closed $\Gtwo$ structure $\P_{3,s}^t$ such that  $\P_{3,s}^t= \P_2^t$ on $r \leq \frac{s}{8}$ and $\P_{3,s}^{t}= \chi^*(\widehat \p_{3,s}^t)$ on $r \geq \frac{s}{4}$. 
\end{proposition}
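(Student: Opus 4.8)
The plan is to interpolate between the two closed $\Gtwo$ forms $\P_2^t$ and $\chi^*(\widehat\p_{3,s}^t)$ by a cut-off of an explicit primitive, exactly as in Propositions~\ref{prop:interpolation-order-2} and the preceding ones, and to control the $C^0$-error with $h_t$-norms so that Lemma~\ref{universal} applies. First I would compare the two forms on the annular region $\frac{s}{8}\le r\le \frac{s}{4}$. Both are closed and $\j$-invariant, and by construction they agree with the same ``model'' $\chi^*(\widehat\p_2^t)$-type expression up to $O(r)$ terms and up to the difference $\chi^*(\widehat\o_1)$ versus $\widehat\o_1$, which differ by the pullback of the Fubini--Study/curvature term $\sigma^*(F_Q)$; the key point is that the discrepancy $\P_2^t-\chi^*(\widehat\p_{3,s}^t)$ is a closed $\j$-invariant form whose $h_1$-norm is controlled on $\chi^{-1}(\n_{2s})$. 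Then a submanifold Poincar\'e Lemma (again integrating along $\cR$, as in the proof of Proposition~\ref{prop:interpolation-order-2}) produces a $\j$-invariant primitive $\zeta$ with an explicit homogeneity estimate $\|\zeta\|_{h_1}=O(\cdot)$ on that annulus, and I set
\[
\P_{3,s}^t = \P_2^t + t^2\, d\big(\varpi_s(r)\,\zeta\big)
\]
for the standard cut-off $\varpi_s(x)=\varpi(x/s)$ with $\varpi\equiv 1$ on $[0,\tfrac12]$ and $\varpi\equiv 0$ on $[1,\infty)$, rescaled so that the transition happens in $[\frac{s}{8},\frac{s}{4}]$. This form is closed by construction, $\j$-invariant, equals $\P_2^t$ where $\varpi_s\equiv 1$ and equals $\chi^*(\widehat\p_{3,s}^t)$ where $\varpi_s\equiv 0$; what remains is to check it is a $\Gtwo$ form.

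To verify the $\Gtwo$ condition I would compare $\P_{3,s}^t$ with $\chi^*(F_t^*\p_1)=F_t^*\P_1^t$ (which is a $\Gtwo$ form with associated metric $h_t = t^2 h_{2,0} + h_{0,2}$) and estimate $\|\P_{3,s}^t - F_t^*\P_1^t\|_{h_t}$. The correction $t^2 d(\varpi_s(r)\zeta)$ lies in $W_{1,2}\oplus W_{0,3}$ since $\zeta\in W_{0,2}$, so the extra $t^2$ power is partially absorbed by the rescaling of $h_t$ in the vertical directions, giving a bound of the form $t\big(\|[\widehat\b]_{1,2}\|_{h_1}+\|[\widehat\b]_{0,3}\|_{h_1}+ O(\text{something in } r, s)\big)$, uniform over the region $r\le s$ once $s>s_0$ is fixed large enough that the background pieces are under control. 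Choosing $t_s'''$ small so that this quantity is below the universal constant $m$ of Lemma~\ref{universal} completes the argument.

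The main obstacle I expect is the bookkeeping of the curvature term $F_Q$ and the cut-off derivative $\varpi_s'$ contributions: because the interpolation now lives near $r\sim s$ rather than near the zero section, the estimate $\|\zeta\|_{h_1}=O(r^2)$ from the earlier propositions no longer forces smallness, and I must instead exploit the gain from large $r$ — the resolution forms $\widehat\o_1$ and $\chi^*(\o_j)$ approach their $\n$-counterparts as $r\to\infty$ at a rate like $O(r^{-4})$ (from the definition of $\sg_a$ and the fact that $Q$ is the exceptional set), so the discrepancy primitive is in fact small on $\frac{s}{8}\le r\le\frac{s}{4}$ when $s$ is large, and $|\varpi_s'|\le C/s$ supplies the needed extra decay. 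Making these two competing effects (the $O(r)$ growth of the ambient $\Gtwo$ form versus the $O(r^{-4})$ decay of the resolution correction, balanced by the $1/s$ from the cut-off and the $t$ from the rescaled metric) line up so that the total is $<m$ uniformly is the delicate part; once the orders of magnitude are tabulated it is a routine choice of $s_0$ and then $t_s'''$.
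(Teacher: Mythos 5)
Your overall skeleton matches the paper's: cut off a primitive of the difference on the annulus $\frac{s}{8}\le r\le\frac{s}{4}$, compare with the $\Gtwo$ form $\P_1^t$ in the $h_t$-norm, invoke Lemma \ref{universal}, and choose $s_0$ large first and then $t$ small. However, there are two concrete problems in how you propose to carry this out. First, your structural claim that the primitive $\zeta$ lies in $W_{0,2}$, so that $d(\varpi_s\zeta)\in W_{1,2}\oplus W_{0,3}$ and the whole error acquires an overall factor of $t$, is wrong: the discrepancy between $\P_2^t$ and $\chi^*(\widehat\p_{3,s}^t)$ comes entirely from $\widehat\o_1$ versus the pullback of the singular K\"ahler form, and its natural primitive is $\mathrm{pr}^*e_1\wedge I[d\bar{\ssf}_{|\h|}(r)]_{1,0}$, a form of type $W_{1,1}$ (you are transplanting the $\xi\in W_{0,2}$ statement from the previous proposition, where it concerned a different discrepancy). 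Consequently $d(\varpi_s\zeta)$ has a nontrivial $W_{2,1}$ component, and for such a component $\|t^2\a\|_{h_t}=\|\a\|_{h_1}$: there is no gain from $t$ at all, so your claimed bound of the form $t(\cdots)$ cannot hold. In the paper the estimate reads $O(r^{-2})+O(r^{-1}s^{-1})+t\bigl(O(r^{-1})+O(s^{-1})\bigr)+t^2O(1)$, and the $t$-independent block is made smaller than $\frac{m}{4}$ purely by the choice of $s_0$; this is exactly the step your bookkeeping would miss.

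Second, and related, your mechanism for producing $\zeta$ --- a ``submanifold Poincar\'e Lemma, integrating along $\cR$'' --- does not deliver a primitive with the decay you need. The radial homotopy operator gives estimates of the type $\|\zeta\|=O(r^2)$ measured from the zero section (useless at $r\sim s$), and on $P$ the integration would have to cross the exceptional set $Q$. The paper sidesteps this entirely: because both $\widehat\o_1$ and the model form are given by explicit potentials $\ssf_{|\h|}(r)$ and $r^2$, the difference is \emph{exactly} $\frac{t^2}{4}\,d\bigl(\mathrm{pr}^*e_1\wedge I[d\bar{\ssf}_{|\h|}(r)]_{1,0}\bigr)$ with $\bar{\ssf}_a=\ssf_a-x^2$, and the decay $|\bar{\ssf}_a'|=O(x^{-1})$, $|\bar{\ssf}_a''|=O(x^{-2})$ is what produces the componentwise estimates (together with Lemma \ref{lem:bounded norm} and the $\varpi_s'=O(s^{-1})$ contributions). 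You correctly sense that large-$r$ decay of the Eguchi--Hanson correction must be exploited, but without the explicit primitive (or an equivalently quantitative construction of one) that step is not justified, and it is precisely the crux of the proof.
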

\begin{proof}
On the anulus $\frac{s}{8} < r < \frac{s}{4}$ we have that:
$$
\P_2^t - \chi^*(\widehat \p_{3,s}^t)= \frac{1}{4} t^2 d(\mathrm{pr}^*e_1 \wedge (I[d(\ssf_{|\h|}(r) - r^2 )]_{1,0})). 
$$
We now let $\varpi$ be a smooth function such that $\varpi=1$ if $x\leq \frac{1}{8}$ and $\varpi=0$ if $x\geq \frac{1}{4}$ and $\varpi_s(x)=\varpi(\frac{x}{s})$; then $|\varpi_\e'| \leq \frac{C}{s}$. Define $\bar{\ssf}_{a}(x)=\ssf_a(x)-x^2 =  \frac{a^2}{(x^4 + a^2)^{1/2} + x^2} - a \log((x^4 + a^2)^{1/2} + a) +  2 a\log(x)$ and 
$$
\xi_s = \varpi_s \mathrm{pr}^*e_1 \wedge (Id[\bar{\ssf}_{\h}(r)]_{1,0}).
$$
 The form $d\xi_s$ lies in $W_{2,1}\oplus W_{1,2}\oplus W_{0,3}$. In order to analyze the $h_1$ norm of each component first observe that $|\bar{\ssf}_a'| =O(x^{-1})$ and $|\bar{\ssf}_a''| =O(x^{-2})$ on $x>1$. 

In addition, note that if $(x,y)=(x_1,x_2,x_3,y_1,y_2,y_3,y_4)\in B \times \RR^4$ is a complex unitary parametrisation; that is, in coordinates $I(x,y)=(x_1,x_2,x_3,-y_2,y_1,-y_4,y_3)$ and the vectors of the frame that the parametrisation determines have lenght one. Moreover, connection forms verify $I\eta_1=-\eta_2$, $I\eta_3=-\eta_4$; to check this one  has to observe that the matrices $(A_{i,k}^j)_{k,j}$ defined in Remark \ref{remark-connection} are complex linear because $\nabla I =0$. Taking this into account, a straightforward computation of the pullback yields the claim. 

Taking these observations and Lemma \ref{lem:bounded norm} into account we obtain that on $r>1$:
 \begin{align*}
 \|[d\xi_s]_{2,1}\|_{h_1} =& \| \varpi_s \mathrm{pr}^*e_1 \wedge [dId[\bar{\ssf}_{|\h|}(r)]_{1,0}]_{2,0}  + [d\varpi_s]_{1,0}\wedge \mathrm{pr}^*e_1 \wedge Id[\bar{\ssf}_{|\h|}(r)]_{1,0}\|_{h_1} \\ =&  O(r^{-2}) + O(r^{-1}s^{-1}) , \\
 \|[d\xi _s]_{1,2}\|_{h_1}=& \|\varpi_s \mathrm{pr}^*e_1 \wedge [dId[\bar{\ssf}_{|\h|}(r)]_{1,0}]_{1,1} + \varpi_s \mathrm{pr}^*(de_1) \wedge Id[\bar{\ssf}_{|\h|}(r)]_{1,0} \\  &+ [d\varpi_s]_{0,1}\wedge \mathrm{pr}^*e_1 \wedge Id[\bar{\ssf}_{|\h|}(r)]_{1,0}\|_{h_1} = O(r^{-1})+  O(s^{-1}), \\
 \|[d\xi_s]_{0,3}\|_{h_1}=&  \|\varpi_s \mathrm{pr}^*e_1 \wedge [dId[\bar{\ssf}_{|\h|}(r)]_{1,0}]_{0,2}\|_{h_1}= O(1).
\end{align*}
We now prove that $\| [d\varpi_s]_{1,0}\wedge \mathrm{pr}^*e_1 \wedge Id[\bar{\ssf}_{|\h|}(r)]_{1,0} \|_{h_1}=O(r^{-1}s^{-1})$ and $\| [d\varpi_\s]_{0,1}\wedge \mathrm{pr}^*e_1 \wedge Id[\bar{\ssf}_{|\h|}(r)]_{1,0} \|_{h_1} =O(s^{-1})$. We first trivialize $\n$ using orthonormal complex coordinates $(x,y)$ and
 taking into account Lemma \ref{lem:bounded norm} we obtain 
$ \|Id[\bar{\ssf}_{|\h|}(r)]_{1,0}\|_{h_1} = \| \sum_{j=1}^4 \bar{\ssf}_{|\h|}'(r) \frac{y_j}{r} \eta_j \|_{h_1} = O(r^{-1})$. On the other hand, $\varpi_s(x,y)=\varpi_s(r)$ and thus 
\begin{align*}
[d\varpi_s]_{1,0} =& \sum_{i=1}^4{\varpi_s'(r) \frac{y_i}{r} \eta_i}, \\
[d\varpi_s]_{1,0} =& - \sum_{i=1}^4\sum_{j=1}^3{ \varpi_s'(r) \frac{y_i}{r} A_j^i (x,y)dx_j} .
\end{align*}
Taking into account that $A_j^i(x,y)=O(r)$ we obtain that 
$ \| [d\varpi_\s]_{1,0} \|_{h_1} =O(s^{-1})$,$ \| [d\varpi_\s]_{0,1}\|_{h_1}=O(r s^{-1})$. A multiplication yields the desired estimates. The remaining estimates are obtained by taking derivatives of 
$$
[Id\bar{\ssf}_{|\h|}]_{1,0}=\frac{\bar{\ssf}_{|\h|}'(r)}{r} (-y_1 \eta_2 + y_2 \eta_1 - y_3 \eta_4 + y_4 \eta_3),
$$
and using Lemma \ref{lem:bounded norm}.
Our estimates yield:
$$
\| t^2 d\xi_s \|_{h_t} = O(r^{-2}) +  O(r^{-1}s^{-1}) + t(O(r^{-1})+ O(s^{-1}) ) + t^2O(1) 
$$
Take $s_0$ such that for each $0<t<1$ and $s>s_0$ it holds that 
$| O(r^{-2}) +  O(r^{-1}s^{-1}) + t(O(r^{-1})+ O(s^{-1}))  |< \frac{m}{4}$
on $\frac{s}{8} \leq r \leq \frac{s}{4}$.
Let $s>s_0$ and take $t_s''<t_s$ such that $|t^2O(1)|< \frac{m}{2} $ and
$
\|\P_2^t - \P_1^t \|_{h_t} < \frac{m}{2}
$
 on $\chi^{-1}(\n_{2s})$; this is possible as we argued before. Define the closed form
$$
\P_{3,s}^t = \P_2^t -  \frac{t^2}{4} d\xi_s,
$$
which coincides with $\P_2^t$ if $r\leq \frac{s}{8}$ and with $\chi^*(\widehat \p_{3,s}^t)$ if $r\geq \frac{s}{4}$. On the neck $\frac{s}{8} \leq r  \leq \frac{s}{4}$ we have that:
$$ \| \P_{3,s}^t - \P_1^t\|_{h_t} \leq \| \P_{3,s}^t - \P_2^t \|_{h_t} + \| \P_2^t - \P_1^t \|_{h_t} < m.$$
The statement is therefore proved.
\end{proof}
The map $F_t \circ \chi $ allows us to glue an annulus around the zero section on $(\nu/\j ,\p_2)$ and an annulus around $Q$ on $(P, \widehat{\P}_2^t)$; this yields a resolution.
\begin{theorem} \label{theo:resol}
There exists a closed $\Gtwo$ resolution $\rho \colon \widetilde{X} \to X$. In addition,
let us denote $D_s(Q)$ the $s$-disk of $P$ centered at $Q$; then
$$
\widetilde X= X- \exp(\nu_{\e}/\j) \cup_{\exp \circ F_t \circ \chi} D_s(Q)
$$
for some $\e>0$, $t>0$ and $s>0$.
\end{theorem}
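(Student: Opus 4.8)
The plan is to assemble the three building blocks produced in the previous subsections — a closed $\Gtwo$ form on $X$ that is ``standard'' near $L$, the closed $\Gtwo$ form $\P_{3,s}^t$ on a neighbourhood of $Q$ in $P$, and the diffeomorphism $\exp\circ F_t\circ\chi$ between the annular overlaps — and to check that, with the parameters $\e,s,t$ chosen appropriately, this diffeomorphism carries one form exactly onto the other, so that they patch to a global closed $\Gtwo$ structure on the glued space $\widetilde X$.

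First I would fix the data on the base. Apply Proposition \ref{prop:interpolation-order-2} to obtain $\e_0>0$; for $\d<\e_0$ it yields a $\j$-invariant closed $\Gtwo$ form $\p_{3,\d}$ on $\nu_R$ equal to $\p_2$ on $\{r\le\d/2\}$ and to $\p$ on $\{r\ge\d\}$. Being $\j$-invariant it descends to $\nu_R/\j$; transporting it by $\exp\colon\nu_R/\j\to U\subset X$ and gluing with $\vp$ on $X-U$ produces a closed $\Gtwo$ form on all of $X$, still written $\p_{3,\d}$, equal to $\vp$ outside $\exp(\nu_\d/\j)$ and to the push-down of $\p_2$ on $\exp(\nu_{\d/2}/\j)$. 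Next, apply Proposition \ref{prop:inter-resol} to obtain $s_0>1$, fix $s>s_0$, and get $t_s'''$ together with the closed $\Gtwo$ form $\P_{3,s}^t$ on $P$, which equals $\P_2^t$ on $\{r\le s/8\}$ and equals $\chi^*(\widehat\p_{3,s}^t)$ on $\{r\ge s/4\}$; recall that $\widehat\p_{3,s}^t=\p_2^t=F_t^*\p_2$ on the outer annulus $\{s\le r\le 2s\}$. Finally I would choose $t<t_s'''$ small, and then $\e$ and $\d$ tied to $s,t$ (roughly $\e\sim ts$, $\d\sim 4ts$, with $\d<\e_0$), so that the three regions overlap as needed.

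Then I would verify the matching. On the annulus of $P$ where $\widehat\p_{3,s}^t=F_t^*\p_2$, the identity $\P_{3,s}^t=\chi^*(\widehat\p_{3,s}^t)$ and the fact that $\chi$ preserves the radial function give
\[
\P_{3,s}^t=\chi^*(F_t^*\p_2)=(F_t\circ\chi)^*\p_2 .
\]
Since $\chi$ preserves and $F_t$ scales the radial function by $t$, the map $\exp\circ F_t\circ\chi$ sends this annulus into $\exp(\nu_{\d/2}/\j)$ when $t$ is small, where $\p_{3,\d}$ is the push-down of $\p_2$; hence $(\exp\circ F_t\circ\chi)^*(\p_{3,\d})=(F_t\circ\chi)^*\p_2=\P_{3,s}^t$ on that annulus. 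So the two closed $\Gtwo$ forms agree on the overlap, and gluing $X-\exp(\nu_\e/\j)$ to the disk bundle $D_s(Q)$ of $P$ around $Q$ along $\exp\circ F_t\circ\chi$ yields a smooth manifold $\widetilde X$ — smooth because the singular locus $L$ lies in $\exp(\nu_\e/\j)$, because $D_s(Q)$ is smooth, and because the gluing map is an open embedding on the overlap — carrying a well-defined closed $\Gtwo$ form $\widetilde\vp$ (closed and pointwise $\Gtwo$ since this holds on each piece), with associated metric $\widetilde g$. Compactness of $\widetilde X$ follows from compactness of $X$ and of the disk bundle over the compact $L$.

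To finish, I would set $\rho\colon\widetilde X\to X$ equal to the inclusion on $X-\exp(\nu_\e/\j)$ and to $\exp\circ F_t\circ\chi$ on $D_s(Q)$; these agree on the overlap, so $\rho$ is well defined and smooth, and off $\rho^{-1}(L)$ it is a diffeomorphism onto $X-L$ because $\chi\colon P-Q\to(\nu/\j)-\{0\}$, $F_t$ and $\exp$ are diffeomorphisms, which is property (1). For (2) take $U=\exp(\nu_\d/\j)$: on $\widetilde X-\rho^{-1}(U)$ the map $\rho$ is the inclusion and $\widetilde\vp=\p_{3,\d}=\vp$, so $\rho^*\vp=\widetilde\vp$ there. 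The formula for $\widetilde X$ in the statement is then just a restatement of the construction. The genuinely hard analytic work — showing each interpolation is a $\Gtwo$ form with the required $C^0$ control — is the content of Propositions \ref{prop:interpolation-order-2} and \ref{prop:inter-resol} and the interpolation for $\widehat\p_{3,s}^t$, all already available; the obstacle remaining here is the parameter bookkeeping ($\e,s,t,\d$) so that the pieces overlap correctly, together with the point that the gluing must identify the two $\Gtwo$ forms \emph{exactly}, not just approximately. That exactness is precisely why those propositions were stated with literal equalities on sub-regions ($\p_{3,\d}=\p_2$, $\widehat\p_{3,s}^t=F_t^*\p_2$) rather than only with norm bounds.
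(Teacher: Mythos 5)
Your proposal is correct and follows essentially the same route as the paper: fix $s>s_0$ and $t<t_s'''$, note that on the outer annulus of $P$ one has $\P_{3,s}^t=\chi^*(\widehat\p_{3,s}^t)=\chi^*(\p_2^t)=(F_t\circ\chi)^*\p_2$, and that $\exp\circ F_t\circ\chi$ carries this annulus into the region where the interpolated form on the base equals $\p_2$ (the paper ties the parameters by $st=\e/4$ with $\e<\e_0$, which is your $\d\sim 4ts$ bookkeeping), so the two closed $\Gtwo$ forms agree exactly on the overlap and glue. Your additional verifications of properties (1) and (2) of a closed $\Gtwo$ resolution and of compactness are consistent with, and slightly more explicit than, the paper's proof.
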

\begin{proof}
Let $\e_0<R$ and $s_0>0$ be the values provided by Proposition \ref{prop:interpolation-order-2} and \ref{prop:inter-resol}. Fix $s>s_0$ and choose $t<t_s'''$ with $st = \frac{\e}{4}$ for some $\e<\e_0$. The map $F_t \circ \chi$ identifies $s\leq r \leq 2s$ on $P$ with $\frac{\e}{4} \leq r \leq \frac{\e}{2}$ on $\n/\j$.

Consider the $\Gtwo$ forms $\P_{3,s}^t$ on $\chi^{-1}(\n_{2s}/\j)$ and  $\p_{3,\e}$ on  on $\n_{2\e}/\j$; on the annulus $s \leq r \leq 2s$ of $\chi^{-1}(\n_{2s}/\j)$ we have that $\P_{3,s}^t = \chi^*(\widehat \p_{3,s}^t)= \p_2^t$ and on $\frac{\e}{4} \leq r \leq \frac{\e}{2}$ on $\n/\j$ we have that $\p_{3,\e}=\p_2$.

Being $(F_t\circ \chi)^* \p_2 = \chi^*(\p_2^t)$, the $\Gtwo$ structure is well defined on the resolution.
\end{proof}

\begin{remark} \label{rem:size-exc}
The radious of the disc $r \leq 2s$ with respect to the metric $h_t$ is $2st$. 
Fixed $s_0>0$ the map $F_t \circ \chi$ identifies $0<r \leq 2s$ on $P$ with $0<r \leq 2st$ on $\n$; therefore if we choose $t\to 0$ then the size of the exceptional divisor decreases.
\end{remark}

\section{Topology of the resolution} \label{sec:topo}

This section is devoted to understanding the cohomology algebra
of the resolution; we shall make use of real coefficients and
denote by $H^*(M)$ the algebra $H^*(M,\RR)$. We start by describing $H^*(\widetilde X)$
in terms of $H^*(X)$ and $H^*(L)$ and we then compute the induced
product on it.

The fibre bundle $\nu$ is topologically trivial; this follows from the fact that 
every $3$ manifold is parallelizable. For a proof see \cite[Remark 2.14]{JK}. 
However, it might not be trivial as a complex bundle as we shall
deduce from the computation of its total Chern class. 

Let us suppose for a moment that $L$ is connected; then $L$ is the mapping torus of diffeomorphism $\psi \colon \S \to \S$, where $\S$ is an
orientable surface of genus $g$. In section \ref{sec:resolu} we denoted  by  $\rqr \colon \S \x [0,1] \to L$ the quotient projection, and by $\rb \colon L \to S^1$ the bundle projection. We also chose that $\h= \rb^*(\h_0)$ with $\h_0$ the angular form on $S^1$. 

In Proposition \ref{prop:chern-class} we compute the total Chern class of $\n$ by observing first that $\n$ admits a section and thus $\n= \underline{\CC}\oplus \ker{\h}$; where $\underline{\CC}$ denotes the trivial line bundle over $L$. Then we identify $\ker(\h)$ with the tangent space of the fibres taking into account that $\h= \rb^*(\h_0)$. A formula for $c(\n)$ follows from these remarks.

In order to state the result it shall be useful to note that $2$-forms on $\S$ determine closed $2$-forms on $L$. More precisely, let us consider $\varpi \colon [0,1]\to \RR$ a bump function with $\varpi|_{[0,1/4]}=0$ and $\varpi|_{[3/4,1]}=1$. Let $\b \in \O^2(\S)$ and let 
 $\a \in \O^1(\S)$ such that  $\psi^*\b= \b + d\a$; note that this is possible because $\psi^*= \rId$ on $H^2(\S)$. Then $\bar{\b}= \b + d(\varpi(t) \a) \in \O^2(\S \x [0,1])$ induces a $2$-form on $L$ via the push-forward. Of course, one can show that the cohomology class of $\overline{\b}$ does not depend on $\a$. In addition, from the Mayer-Vietoris long exact sequence we deduce that $[\rqr_*(\bar{\b})] \neq 0$ if $[\b] \neq 0$. 

We denote by $\o_\S\in \O^2(L)$ a closed $2$-form induced by a volume form $\mathrm{vol}_\S$ of $\S$ that integrates $1$ on $\S$. This class represents the 
 Poincar\' e dual of a circle $C\subset L$ such that  $\rqr ( \{p_0\} \x [0,1] ) \subset C$ and $C-\rqr ( \{p_0\} \x \{0\})$ is an embedded line on $\rqr(\S \x \{0\})$ if it is not empty.

\begin{proposition} \label{prop:chern-class}
The total Chern class of $\nu$ is $c(\nu)= 1 + (2-2g)[\o_\S]$.
\end{proposition}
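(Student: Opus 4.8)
The plan is to compute $c(\nu)$ by exploiting the splitting of $\nu$ as a complex line bundle plus a trivial factor, and then identifying the non-trivial factor with the vertical tangent bundle of the fibration $\rb\colon L\to S^1$. First I would note that $\nu$ carries a nowhere-vanishing section: indeed, since $L$ is an oriented $3$-manifold it is parallelizable, and in the resolution construction we chose the complex structure $I$ on $\nu$ using the unit vector field $V=\|\h\|^{-1}\h^\sharp$ on $L$; concretely, a nowhere-vanishing section of $\nu$ can be produced from the framing, so that $\nu\cong\underline{\CC}\oplus\ell$ for a complex line bundle $\ell=\ker\h$ of real rank $2$. Consequently $c(\nu)=c(\ell)=1+c_1(\ell)$ and it remains to compute $c_1(\ell)\in H^2(L)$.

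Next I would identify $\ell=\ker\h$ with the vertical tangent bundle of $\rb\colon L\to S^1$. Since $\h=\rb^*(\h_0)$ with $\h_0$ the angular form on $S^1$, the subbundle $\ker\h\subset TL$ is exactly the tangent bundle along the fibres $\S$ of the mapping torus $L=\S\x[0,1]/(x,0)\sim(\psi(x),1)$. This vertical bundle, call it $T^{\mathrm{vert}}L$, restricts on each fibre $\rqr(\S\x\{s\})$ to the tangent bundle $T\S$. Its Euler class (equivalently, as a complex line bundle via $I$, its first Chern class) is therefore the class in $H^2(L)$ that restricts to $e(T\S)=(2-2g)[\mathrm{vol}_\S]$ on a fibre. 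Using the notation set up just before the statement, the fibrewise Euler class of $T^{\mathrm{vert}}L$ is pulled back to $\S\x[0,1]$ as $(2-2g)\,\mathrm{vol}_\S$, and the construction $\b\mapsto\bar\b=\b+d(\varpi(t)\a)$ (with $\psi^*\mathrm{vol}_\S=\mathrm{vol}_\S+d\a$) shows precisely how such a fibrewise class descends to a well-defined closed $2$-form on $L$ representing it. Hence $c_1(\ell)=[\,\overline{(2-2g)\,\mathrm{vol}_\S}\,]=(2-2g)[\o_\S]$, giving $c(\nu)=1+(2-2g)[\o_\S]$.

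To make the identification $c_1(\ell)=e(T^{\mathrm{vert}}L)$ rigorous I would argue as follows: the complex structure $I$ on $\nu$ was defined by $I(X)=V\x X$ with $V=\|\h\|^{-1}\h^\sharp$, and the orientation on $\ker\h$ induced by $I$ agrees with the fibre orientation of the mapping torus (this follows from the discussion preceding the statement, where $\S$ is oriented compatibly and $\psi^*=\rId$ on $H^2(\S)$). Therefore $c_1(\ell)$ equals the Euler class of the oriented real plane bundle $T^{\mathrm{vert}}L$. Restricting to a single fibre and using that $e(T\S)=(2-2g)[\mathrm{vol}_\S]$ by Gauss--Bonnet, together with the fact (noted before the statement, and following from the Mayer--Vietoris sequence of the cover of $L$ by two thickened copies of $\S$) that a nonzero fibre class lifts to a nonzero class $[\rqr_*(\bar\b)]$ on $L$, pins down $c_1(\ell)=(2-2g)[\o_\S]$ uniquely.

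The main obstacle is the bookkeeping around the mapping-torus structure: one must check carefully that $\ker\h$ really is the vertical tangent bundle (rather than merely fibrewise isomorphic to it), that the complex orientation induced by $I$ matches the fibre orientation, and that the class $[\o_\S]$ as defined via the push-forward $\rqr_*$ is the one that restricts to $[\mathrm{vol}_\S]$ on a fibre with the correct sign — so that the computation yields $+(2-2g)$ and not $-(2-2g)$. Once these orientation conventions are aligned, the Euler-class computation is a direct application of Gauss--Bonnet fibrewise combined with the description of $H^2(L)$ recalled before the statement.
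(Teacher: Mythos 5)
Your overall skeleton matches the paper's (split $\nu$ as a trivial complex line plus a rank-one complement, then identify that complement's first Chern class as $(2-2g)[\o_\S]$), but the crucial step is asserted rather than proved. You write ``$\nu\cong\underline{\CC}\oplus\ell$ for a complex line bundle $\ell=\ker\h$'', yet $\ker\h$ is a subbundle of $TL$ while the complement of the trivial summand sits inside the normal bundle $\nu$; there is no a priori reason these agree as complex line bundles, and this identification is exactly where the $\Gtwo$ geometry enters. The paper proves it by exhibiting the explicit map $(z_1+iz_2,W)\longmapsto z_1 s+z_2\, e_1^\sharp\times s+W\times s$ from $\underline{\CC}\oplus\ker\h$ to $\nu$ and checking complex linearity via the octonion identity $u\times(v\times w)+v\times(u\times w)=g(u,w)v+g(v,w)u-2g(u,v)w$, which yields $e_1^\sharp\times(W\times s)=(W\times e_1^\sharp)\times s$; here the complex structures $I(v)=e_1^\sharp\times v$ on $\nu$ and $\rJ W=W\times e_1^\sharp$ on $\ker\h$ are used essentially. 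Your argument never invokes $I$ at all, and it must: $\nu$ is trivial as a real bundle (it is stably trivial over a $3$-complex), so the entire content of the proposition lies in the complex structure, and without the cross-product isomorphism the Euler class of the vertical tangent bundle of $L$ has no bearing on $c_1(\nu)$. Relatedly, the difficulty you flag (``check that $\ker\h$ really is the vertical tangent bundle'') is the easy part, since $\h=\rb^*(\h_0)$ gives $\ker\h=\ker d\rb$ on the nose; and the nowhere-vanishing section of $\nu$ comes from general position ($\mathrm{rk}\,\nu=4>3=\dim L$), not ``from the framing'' of $L$, which concerns $TL$ rather than $\nu$.

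There is also a gap in how you compute the Euler class of the vertical bundle. Knowing that it restricts to $(2-2g)[\mathrm{vol}_\S]$ on a fibre does not pin down a class in $H^2(L)$: the restriction map $H^2(L)\to H^2(\S)$ has kernel (the image of $H^1(\S)$ under the Wang connecting homomorphism of the mapping torus), and the fact you cite --- that $[\rqr_*(\bar\b)]\neq 0$ when $[\b]\neq 0$ --- is an injectivity statement in the opposite direction, so it cannot exclude an extra contribution from $H^1(\S)$. The paper avoids this by computing $c_1(E)$ directly: it takes a compactly supported Thom form $\ups$ of $T\S\to\S$, uses that $d\psi$ is fibrewise volume-preserving to write $(d\psi)^*\ups=\ups+d\a$ with $\a$ compactly supported, pushes $\ups+d(\varpi\a)$ forward to a Thom form of $E$ over $L$, and pulls back along the zero section, landing exactly on a representative of $(2-2g)[\o_\S]$. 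You need an argument of this kind (or an equivalent Gysin-sequence computation) rather than an appeal to uniqueness of a class with prescribed fibre restriction.
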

\begin{proof}
Let $\x$ be the cross product on $TM|_L$ determined by $\vp$. Consider on $E=\ker(\h)$ the complex structure
$\rJ W = W \x e_1^\sharp$,
where $e_1= \|\h\|^{-1}\h$. This is well-defined because $\x$ defines a cross product on $T_pL$ and if $\h(X)=0$, then $X \x e_1^\sharp \perp e_1^\sharp$.
Recall also that the complex structure on $\nu$ is: $I(v) = e_1^\sharp \x v$.

We prove that there is an isomorphism of complex line bundles:
 $$
 \underline{\CC} \oplus  E \to \nu.
 $$

A nowhere-vanishing section  $s \colon L \to\n$ exists because $\dim L=3>4 = \mathrm{rk}(\n)$; we define the isomorphism $\underline{\CC} \oplus E  \to \nu$,
$$ 
(z_1 + iz_2,W) \longmapsto z_1 s + z_2  e_1^\sharp \x s +  W \x s .
$$
In order to check that the isomorphism is complex linear one uses the equality \cite[Lemma 2.9]{SW}:
 $$
 u \x (v \x w) + v \x (u \x w) = g(u,w)v + g(v,w)u - 2g(u, v)w.
 $$
 where $g$ denotes the restriction to $\n$ of the metric on $M$. In our case taking $u=e_1^\sharp$, $v=s$ and $w=W$ we obtain that $e_1^\sharp \x(W \x s)= ( W \x e_1^\sharp)\x s$.
  
From the isomorphism we get that
$c(\nu)=c(\underline{\CC})c(E)=1+c_1(E)$.  We now compute $c_1(E)$; note that $E$ is the vertical distribution $d\rqr(T\S\x[0,1])\subset TM$.
First consider a compactly-supported $2$-form $\ups\in \O^2(T\S)$ representing the Thom class of the bundle $T\S \to S$ that integrates $1$ over the fibres. Being the diffeomorphism $d\psi \colon T\S \to T\S$ volume-preserving we obtain that $(d\psi)^*\ups$ is also a compactly-supported $2$-form that integrates $1$ over the fibres. Thus, $(d\psi)^*\ups= \ups + d\a$ for some compactly-supported $\a \in \O^1(T\S)$. 
In addition let $s_0 \colon \S \to T\S$ the zero section; then $[s_0^*(\ups)]=(2-2g)[\mathrm{vol}_\S]$.

The push-forward $\rqr_*(\ups + d(\varpi \a)) \in \O^2(E)$ of course induces the Thom class of $E$. Being $s[p,t]=d\rqr_{(p,t)}(s_0(p,t))$ the zero section of $E$ we obtain:
 $$
 c_1(E)=s^*[\rqr_*(\ups + d(\varpi \a))]=[q_*(s_0^*\ups + d(\varpi s_0^* \a) ) ]=(2-2g)[\o_\S].
 $$ 
To obtain the last equality we have taken into account that $s_0^*(d\varpi)=0$, $s_0^*(\psi^* \ups)=s_0^*\ups + d(s_0^* \a)$ and $[s_0^*(\ups)]=(2-2g)[\mathrm{vol}_\S]$.
 
\end{proof}

The projectivized bundle of $\n$ coincides with $Q$ because $
\PP(\n)= P_{\UU(2)}(\n)\x_{\UU(2)} \CP^1 = Q.
$
An obstruction-theoretic argument ensures that it is trivial:

\begin{lemma} \label{lem:Q-trivial}
The bundle $Q \to L$ is trivial.
\end{lemma}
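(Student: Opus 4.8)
The plan is to recognise $Q$ as an oriented $S^2$-bundle, equivalently a principal $\SO(3)$-bundle, over the $3$-dimensional manifold $L$, and then to kill the single relevant obstruction to its triviality. As in the proof of Proposition \ref{prop:chern-class}, the inequality $\dim L=3<4=\operatorname{rk}_\RR\n$ produces a nowhere-vanishing section of $\n$, hence an isomorphism of complex vector bundles $\n\cong\underline{\CC}\oplus E$ with $E=\ker\h$ carrying the complex structure $\rJ$; consequently $Q=\PP(\n)=\PP(\underline{\CC}\oplus E)$. First I would record the elementary observation that the composite $\UU(1)\inc\UU(2)\to\mathrm{PU}(2)$, $z\mapsto\diag(1,z)$, takes values in the standard circle $\SO(2)\subset\SO(3)\cong\mathrm{PU}(2)$, acting on $\CP^1=S^2$ by rotation about the axis $\{[1:0],[0:1]\}$. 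This identifies $\PP(\underline{\CC}\oplus E)$ with the unit sphere bundle $S(\underline{\RR}\oplus E_\RR)$ of the oriented real rank-$3$ bundle $\underline{\RR}\oplus E_\RR$, where $E_\RR$ denotes $E$ regarded as an oriented real plane bundle. One may instead bypass this identification and work directly with the adjoint bundle $\ad\PP(\n)=\su(\n)$, which for $\n=\underline{\CC}\oplus E$ likewise splits as $\underline{\RR}\oplus E_\RR$; the computation below is unaffected.

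Next I would apply obstruction theory to the oriented rank-$3$ bundle $V:=\underline{\RR}\oplus E_\RR$ over $L$. Since $\pi_1(\SO(3))=\ZZ_2$, $\pi_2(\SO(3))=0$, and $H^i(L)=0$ for $i\geq4$ for dimensional reasons, the only potentially non-zero obstruction to trivialising the (principal $\SO(3)$) frame bundle of $V$ lies in $H^2(L;\ZZ_2)$, where it is the Stiefel--Whitney class $w_2(V)$. So the lemma reduces to the statement $w_2(V)=0$.

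The final step is a one-line computation. By the Whitney sum formula $w_2(V)=w_2(E_\RR)$, and for a complex line bundle $w_2(E_\RR)$ is the mod-$2$ reduction of the Euler class, namely of $c_1(E)=c_1(\n)$. By Proposition \ref{prop:chern-class} we have $c_1(\n)=(2-2g)[\o_\S]$, an even cohomology class, so its mod-$2$ reduction vanishes and $w_2(V)=0$. Hence $V$, and therefore $Q$, is trivial. If $L$ is disconnected one runs the same argument on each component (using its own genus) and assembles the trivialisations.

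I do not expect a serious obstacle: the substantive input, the evenness of $c_1(\n)$, is already supplied by Proposition \ref{prop:chern-class}. The only points demanding care are the passage from the $\CP^1$-bundle $\PP(\n)$ to an honest $\SO(3)$-bundle and the correct bookkeeping of $\pi_\ast\SO(3)$ in the obstruction count; both are routine, and the disconnectedness of $L$ is a cosmetic complication.
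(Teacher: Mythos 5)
Your proof is correct and follows essentially the same route as the paper: reduce the $\CP^1$-bundle $Q=\PP(\underline{\CC}\oplus E)$ to the oriented rank-$3$ bundle $\underline{\RR}\oplus E_\RR$ (the paper does this via transition functions $g_{\a\b}\x\rId$ and $\Diff(S^2)\simeq \SO(3)$, you via the $\UU(1)\to \mathrm{PU}(2)\cong\SO(3)$ identification), then kill the only obstruction over the $3$-complex $L$ by computing $w_2 = c_1(E)\ (\mathrm{mod}\ 2) = (2-2g)[\o_\S]\ (\mathrm{mod}\ 2)=0$ using Proposition \ref{prop:chern-class} and $\pi_2(\SO(3))=0$. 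No substantive difference from the paper's argument.
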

\begin{proof}
First recall that the spaces $\Diff(S^2)$ and $\SO(3)$ have the same homotopy type. Classifying $S^2$ bundles is therefore equivalent to classifying rank $3$ vector bundles.
In our case, denoting by $E=\ker(\h)$ as in the proof of Proposition \ref{prop:chern-class},
if $g_{\a \b} \in SO(2)$ are the transition functions of $E$, taking into account the diffeomorphism $\CP^1 \to S^2$ one can compute that the transition functions of $Q$ are
$$
h_{\a \b}(x)(v_1,v_2,v_3) = (g_{\a \b}(v_1,v_2),v_3)
$$
Therefore, the associated rank $3$ vector bundle $V$ has transition functions $g_{\a \b} \x \rId \in \SO(3)$. This
is trivial if and only if $Q$ is. We now observe that $V$ is trivial if and only if its second Stiefel-Withney class vanishes. For that purpose consider a CW-decomposition,
$$
L=\cup_{k=0}^3{L^k}.
$$ 
Then $V|_{L^1}$ is trivial because $\SO(3)$ is connected. The trivialization extends to $L^2$ if the primary obstruction cocycle is exact; this coincides with the second Stiefel-Whitney class (see \cite[Proposition 3.21]{Hatcher}). If it vanishes, then the last obstruction cocycle lies in $H^3(L,\pi_2(\SO(3)))=0$ and therefore the trivialization extends to $L$.

We now compute the second Stiefel-Whitney class of $V$. Regarding the transition functions $V=E\oplus \RR$ and thus $w_2(V)=w_2(E)$. Being $E$ a complex vector bundle, we obtain $w_2(E)=c_1(E) \mbox{ (mod 2)}=(2-2g)\o_\S  \mbox{ (mod 2)}=0$.
\end{proof}

Using Proposition \ref{prop:chern-class} we re-state a well known fact. For that purpose consider
the tautological bundle associated to $\n$:
$$
\overline{P} = P_{\UU(2)}(\n)\x_{\UU(2)} \widetilde{\CC}^2.
$$
Denote frames in $P_{\UU(2)}(\n)$ by $F$. There is a well-defined $\Z_2$ action on $\overline{P}$, determined 
by $[F,(z_1,z_2,\ell)] \longmapsto [F,(-z_1,-z_2,\ell)]$. The quotient $\overline{P}/\ZZ_2 $ coincides with $P$.
We denote by $\varrho \colon \overline{P} \to P$ the projection.

\begin{proposition}\label{prop:cohomology-exceptional-divisor}
Let $\mathrm{e}(\overline{P})$ be the Euler class the line bundle $\overline{P} \to Q$. Denote by $H^*(L)[\bx]$ the algebra of polynomials with coeffiecients in $H^*(L)$
The map:
$$
F \colon H^*(L)[\bx]/\la \bx^2+ (2-2g) [\o_\S] \bx \ra \to H^{*}(Q),\quad  F(\b)= \mathrm{pr}^*\b, F(\bx)=\mathrm{e}(\overline{P}),
$$
is an isomorphism of algebras.
\end{proposition}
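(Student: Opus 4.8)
The plan is to recognise $Q$ as the projectivised bundle $\PP(\nu)\to L$ of the rank-$2$ complex bundle $\nu$, to identify $\overline{P}\to Q$ with the tautological line bundle, and then to obtain the statement from the projective bundle theorem (Leray--Hirsch) together with Proposition \ref{prop:chern-class}.

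First I would unwind the definitions. As already noted, $Q=\PP(\nu)=P_{\UU(2)}(\nu)\x_{\UU(2)}\CP^1$. Since $\widetilde{\CC}^2=\{(z_1,z_2,\ell)\in\CC^2\x\CP^1 : (z_1,z_2)\in\ell\}\to\CP^1$ is the total space of the tautological line bundle of $\CP^1$ (the fibre over $\ell$ is the line $\ell\subset\CC^2$ itself), the associated bundle $\overline{P}=P_{\UU(2)}(\nu)\x_{\UU(2)}\widetilde{\CC}^2$ is the tautological line bundle $\mathcal{O}_{\PP(\nu)}(-1)$ over $Q$: its fibre over the point of $Q$ given by a line $\ell\subset\nu_p$ is $\ell$. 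In particular $\mathrm{e}(\overline{P})=c_1(\mathcal{O}_{\PP(\nu)}(-1))$ restricts on every fibre $\CP^1$ of $\mathrm{pr}\colon Q\to L$ to a generator of $H^2(\CP^1;\RR)$; the quotient $\overline{P}/\ZZ_2=P$ plays no role here, since we only use $\overline{P}$ itself (the induced line bundle $\s\colon P\to Q$ is $\overline{P}^{\otimes 2}$, but we do not need this).

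Next I would apply the projective bundle theorem to $\PP(\nu)\to L$. Because $\{1,\mathrm{e}(\overline{P})\}$ restricts to an $\RR$-basis of $H^*(\CP^1)$ on each fibre of $\mathrm{pr}$, Leray--Hirsch gives that $H^*(Q)$ is a free $H^*(L)$-module on $\{1,\mathrm{e}(\overline{P})\}$, and moreover, as a ring, $H^*(Q)$ is the quotient of $H^*(L)[\bx]$ by the relation expressing the vanishing of the top Chern class of the quotient bundle $\mathrm{pr}^*\nu/\mathcal{O}_{\PP(\nu)}(-1)$, under $\bx\mapsto\mathrm{e}(\overline{P})$ and $\b\mapsto\mathrm{pr}^*\b$. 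This relation involves only $c_1(\nu)$, $c_2(\nu)$ and $\bx$; by Proposition \ref{prop:chern-class} we have $c_1(\nu)=(2-2g)[\o_\S]$, while $c_2(\nu)=0$ because $\nu\cong\underline{\CC}\oplus E$ with $E$ a complex line bundle, so the degree-$4$ component of $c(\underline{\CC})c(E)$ vanishes. Substituting, the relation reduces to the one in the statement, i.e. to the image under $F$ of the defining relation of $H^*(L)[\bx]/\la\bx^2+(2-2g)[\o_\S]\bx\ra$, so $F$ is precisely the isomorphism furnished by the projective bundle theorem. (Equivalently: the displayed relation shows $F$ is a well-defined algebra homomorphism, it is surjective by Leray--Hirsch, and it is an isomorphism because both sides are free $H^*(L)$-modules of rank $2$ and $F$ is a module map carrying the basis $\{1,\bx\}$ to the basis $\{1,\mathrm{e}(\overline{P})\}$.)

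The only delicate point I expect is the first step: identifying $\overline{P}$ with the tautological line bundle of $Q=\PP(\nu)$ and tracking the sign conventions in the projective bundle relation so that it comes out exactly as stated. The vanishing $c_2(\nu)=0$ — which is precisely the existence of a nowhere-zero section of $\nu$, already used to split $\nu=\underline{\CC}\oplus E$ — is what removes the degree-$0$ term and makes the quotient ring take the clean form appearing in the proposition.
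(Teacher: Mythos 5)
Your route is the one the paper implicitly takes: Proposition \ref{prop:cohomology-exceptional-divisor} is stated there without proof, as a ``well known fact'' obtained from the projective bundle theorem for $Q=\PP(\n)$ together with Proposition \ref{prop:chern-class}, and your identification of $\overline{P}$ with the tautological bundle $\mathcal{O}_{\PP(\n)}(-1)$ plus Leray--Hirsch is the standard proof of that fact. Two of your steps, however, do not yet give the proposition as stated.

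First, the generating relation is not ``the vanishing of the top Chern class of the quotient bundle $\mathrm{pr}^*\n/\overline{P}$'': for a rank-two bundle that quotient is a line bundle whose first Chern class is $\mathrm{pr}^*c_1(\n)-\mathrm{e}(\overline{P})$, which is not zero. The relation comes from the Whitney formula for $0\to\overline{P}\to\mathrm{pr}^*\n\to\mathrm{pr}^*\n/\overline{P}\to 0$, equivalently from the vanishing of the top Chern class of $\Hom(\overline{P},\mathrm{pr}^*\n)$, which has the tautological nowhere-zero section; with $\bx=\mathrm{e}(\overline{P})$ it reads $\bx^2-\mathrm{pr}^*c_1(\n)\,\bx+\mathrm{pr}^*c_2(\n)=0$. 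Second --- and this is exactly the ``delicate point'' you flag but never resolve --- substituting $c_2(\n)=0$ and $c(\n)=1+(2-2g)[\o_\S]$ from Proposition \ref{prop:chern-class} into that relation yields $\bx^2-(2-2g)[\o_\S]\,\bx=0$, i.e.\ the stated relation only up to the sign of the linear term. That sign is the sole place where anything beyond the formal projective bundle theorem enters: it is governed by orientation conventions, notably that the complex structure used on $E=\ker\h$ in Proposition \ref{prop:chern-class} is $\rJ W=W\x e_1^\sharp=-\,e_1^\sharp\x W$ (conjugate to $I$), so one must check whether the orientation it induces on the fibres agrees with the one used to normalise $\o_\S$; this comparison is what decides between $+(2-2g)$ and $-(2-2g)$ in front of $[\o_\S]\bx$. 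As written, your argument establishes the proposition only up to this sign, so to complete it you need to track the orientations through the proof of Proposition \ref{prop:chern-class}. (In the paper's eventual application the fibres $\S_i$ are tori, so $2-2g_i=0$ and the ambiguity is harmless there, but the proposition as stated requires the sign.)
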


Recall that we denoted the projection by $\mathrm{pr} \colon P \to L$.
Consider $\t \in \O^2(\overline{P})$ the Thom $2$-form of the line bundle $\overline{P} \to Q $ and
note that we can suppose that $\t$ is $\Z_2$-invariant because the involution preserves the orientation on the fibres. From Proposition \ref{prop:cohomology-exceptional-divisor} we obtain:
$$
[\t \wedge \t] = - (2-2g)[(\varrho \circ \mathrm{pr})^*\o_\S \wedge \t].
$$
We also denote by $\t$ the pushforward $\varrho_*\t \in \O(P)$; on $H^*(P)$ it also verifies that:
$$
[\t \wedge \t ]= - (2-2g)[ \mathrm{pr}^*\o_\S \wedge \t].
$$
Of course, we can extend $\t$ to a $2$-form on $\widetilde{X}$ and it corresponds to the Poincar\' e dual of $Q$.

We now compute the cohomology of $\widetilde{X}$; for this we do not assume that $L$ is connected and we denote by $ L_1 ,\dots, L_r $ it connected compontents. Each $L_i$ is the mapping torus of diffeomorphism $\psi_i \colon \S_i \to \S_i$, where $\S_i$ is an
orientable surface of genus $g_i$; we denote by $\o_i$ the  $2$-form $\o_{\S_i}$ as constructed before.
We also denote $Q_i= Q|_{L_i}$, $P_i=P|_{L_i}$ and $\t_i$ the Thom form of $Q_i \subset P_i$. 

\begin{proposition}\label{prop:cohomology-short-sequence}
There is a split exact sequence:
$$
\xymatrix{ 0 \ar[r] & H^*(X) \ar[r]^{\pi^*} & H^*(\widetilde X) \ar[r] & \oplus_{i=1}^r H^*(L_i)\otimes \la \bx_i \ra \ar[r] & 0}
$$
where $\bx_i$ has degree two.

\end{proposition}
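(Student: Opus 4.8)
The plan is to build the exact sequence from a Mayer--Vietoris decomposition of $\widetilde X$ adapted to the resolution, exactly as it was constructed in Theorem~\ref{theo:resol}. Write $\widetilde X = A \cup B$ where $A = X - \exp(\nu_{\e'}/\j)$ for a slightly larger radius $\e' < \e$ (so $A$ deformation retracts onto $X - L$, equivalently onto the complement of the singular locus) and $B = D_s(Q) = \bigsqcup_i D_s(Q_i)$ is the disc bundle in $P$ around the exceptional locus, which deformation retracts onto $Q = \bigsqcup_i Q_i$. The overlap $A \cap B$ is a union of annulus bundles, one over each $L_i$, each homotopy equivalent to the sphere bundle $S(\nu|_{L_i})/\j$, i.e.\ the unit sphere bundle of the $\RR^4$--bundle $\nu|_{L_i}$ quotiented by $\pm 1$, which is an $\RP^3$--bundle over $L_i$. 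The first step is therefore to identify $H^*$ of each of these three pieces and the restriction maps between them.

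The key computation is the cohomology of $Q_i$ and of the link $S(\nu|_{L_i})/\j$. For $Q_i = \PP(\nu|_{L_i})$, Proposition~\ref{prop:cohomology-exceptional-divisor} gives $H^*(Q_i) = H^*(L_i)[\bx_i]/\langle \bx_i^2 + (2-2g_i)[\o_i]\bx_i\rangle$, so as an $H^*(L_i)$--module it is $H^*(L_i)\,1 \oplus H^*(L_i)\,\bx_i$, where $\bx_i = \mathrm{e}(\overline P_i)$ is the Euler class. I would next run the Gysin/Mayer--Vietoris of the $\CP^1$--disc bundle: since $B$ retracts to $Q$ and $A\cap B$ retracts to the complement of the zero section $Q$ inside the total space $P$ (restricted near $Q$), the map $H^*(B) \to H^*(A\cap B)$ is exactly the restriction $H^*(Q) \to H^*(Q\text{-complement})$, whose kernel is the ideal generated by the Thom class $\t_i$, i.e.\ $H^*(L_i)\cdot \bx_i$ (using that $\bx_i$ is represented by $\t_i|_Q$). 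Dually, $H^*(A) \to H^*(A\cap B)$ is the restriction $H^*(X-L)\to H^*(\text{link})$. Then the Mayer--Vietoris sequence
$$
\cdots \to H^{k-1}(A\cap B) \to H^k(\widetilde X) \to H^k(A)\oplus H^k(B) \to H^k(A\cap B) \to \cdots
$$
should collapse: I claim the two restriction maps into $H^*(A\cap B)$ are jointly surjective, which forces the connecting homomorphism to vanish and splits off $H^k(\widetilde X) \cong \ker\big(H^k(A)\oplus H^k(B)\to H^k(A\cap B)\big)$. The class living over $A$ is $H^*(X)$ — here one uses that $X - L$ has the same cohomology as $X$ through the range that matters, because $L$ has real codimension $4$, so $H^*(X-L)\cong H^*(X)$ in degrees $\le 3$ and there is no loss since $\widetilde X$ is a closed $7$--manifold and one can argue by Poincaré duality, or more directly that excision and the Thom isomorphism for the normal bundle give $H^k(X,X-L)\cong \oplus_i H^{k-4}(L_i)$. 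The part of $H^*(B)$ not coming from $A$ is the $\bx_i$--summand $\oplus_i H^*(L_i)\otimes\langle\bx_i\rangle$, which restricts to zero on the link (it is a multiple of the Thom class), giving the third term of the sequence.

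Concretely the steps are: (1) set up the decomposition $\widetilde X = A\cup B$ and compute $H^*(A\cap B)$ via the Gysin sequence of the $\RP^3$-bundle $S(\nu|_{L_i})/\j\to L_i$, noting $H^*(\RP^3;\RR)=H^*(S^1;\RR)$ cohomologically so the Euler class of the $\RR^4$ bundle mod $2$ is irrelevant over $\RR$ and the bundle is $\RR$-cohomologically a product; (2) identify $H^*(B)=\oplus_i H^*(Q_i)$ and, via Proposition~\ref{prop:cohomology-exceptional-divisor}, split it as $\pi^*H^*(L)$-part plus $\bx_i$-part; (3) identify the image of $H^*(X)\xrightarrow{\rho^*}H^*(\widetilde X)$ with the classes supported on $A$, using $H^*(X)=H^*(M)^{\ZZ_2}$ and $H^k(X,X-L)\cong\oplus_i H^{k-4}(L_i)$; (4) feed these into Mayer--Vietoris, check injectivity of $\rho^*$ (a retraction is given by restriction to $A$ followed by the isomorphism $H^*(X-L)\cong H^*(X)$, which also furnishes the splitting of the sequence), and check that the connecting map dies because the restriction maps are jointly onto $H^*(A\cap B)$. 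The main obstacle I anticipate is step~(1) together with the joint-surjectivity claim in step~(4): one must verify carefully that the restriction of $\mathrm{pr}^*H^*(L)$ from $B$ and of $H^*(X-L)$ from $A$ together hit all of $H^*(S(\nu)/\j)$, i.e.\ that the generator of the fibrewise $H^1(\RP^3;\RR)=H^1(S^1;\RR)$ is pulled back from $L$ (which follows since $\nu=\underline\CC\oplus\ker\h$ from Proposition~\ref{prop:chern-class}, so the sphere bundle has a section and the Gysin sequence of the $\RR^4$-bundle degenerates over $\RR$) — once that is in place the sequence is forced to be short exact and split, and the degree-two generators $\bx_i$ are exactly the Euler classes, completing the proof.
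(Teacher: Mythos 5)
There is a genuine gap, and it sits exactly where you flagged your main worry: the joint-surjectivity claim in step (4) is false in this setting, so the Mayer--Vietoris connecting homomorphisms do \emph{not} vanish. First, a factual slip: the link of each singular component is an $\RP^3$-bundle over $L_i$, and $H^*(\RP^3;\RR)$ is that of $S^3$ (an $\RR$-homology $3$-sphere), not of $S^1$; there is no fibrewise degree-one class to pull back from $L$. The class you actually have to hit is the degree-three fibre class $f_i$ of the $\RP^3$-fibres. Restriction from $B\simeq Q$ cannot hit it: on each fibre the map $\RP^3\to Q_i$ factors through the $\CP^1$ fibre of $Q_i\to L_i$, and $H^3(\CP^1)=0$. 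Restriction from $A\simeq X-L$ cannot hit it either: if $\a\in H^3(X-L)$ had $f_i$-component $c_i$ on the links, then naturality of the connecting map of the pair $(X,X-L)$, together with the Thom isomorphism $H^4(X,X-L)\cong\oplus_i H^0(L_i)$, forces $\sum_i c_i\, PD[L_i]=0$ in $H^4(X)$. But each $L_i$ is calibrated by the closed form $\vp$ (Lemma \ref{lem:3-dim}), so $\int_{L_i}\vp=\mathrm{vol}(L_i)>0$ and $PD[L_i]\neq 0$; hence no class restricting to a single $f_i$ exists. So the restrictions are not jointly surjective, the connecting map $H^3(A\cap B)\to H^4(\widetilde X)$ is nonzero, and your identification $H^k(\widetilde X)\cong\ker\bigl(H^k(A)\oplus H^k(B)\to H^k(A\cap B)\bigr)$ fails. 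Indeed, were it correct it would give $b_k(\widetilde X)=b_k(X-L)+\sum_i\bigl(b_{k-2}(L_i)-b_{k-3}(L_i)\bigr)$, which contradicts the statement being proved (and the subsidiary claim $H^*(X-L)\cong H^*(X)$ also breaks down from degree $3$ or $4$ on, for the same reason $PD[L_i]\neq 0$; the appeal to Poincar\'e duality does not repair this in the degrees where the discrepancy occurs).

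The paper avoids this issue entirely by never asking any connecting map to vanish: it compares the long exact sequences of the pairs $(X,L)$ and $(\widetilde X,Q)$ via $\rho^*$, uses that the relative groups $H^*(X,L)\to H^*(\widetilde X,Q)$ are isomorphisms while $\oplus_i H^*(L_i)\to\oplus_i H^*(Q_i)$ is injective with cokernel $\oplus_i H^{*-2}(L_i)\otimes\la\bx_i\ra$ (since $Q_i\cong L_i\x S^2$), and then runs the snake lemma; the failure of exactness you would have to control appears identically in both rows and cancels in the comparison. The splitting is then exhibited explicitly by $\b\otimes\bx_i\mapsto \mathrm{pr}^*(\b)\,\tau_i$ with $\tau_i$ the Thom class of $Q_i$, rather than through a retraction onto $H^*(X-L)$. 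If you want to salvage a Mayer--Vietoris route, you would have to keep the nonzero connecting maps and compare them with the corresponding ones for the decomposition of $X$ itself, which essentially reproduces the paper's diagram argument.
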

\begin{proof}
The existence of such exact sequence is contained in the proof of \cite[Proposition 6.1]{JK}; we outline it. Consider the long exact sequence of pairs $(X, L)$ and $(\widetilde X, Q)$. There is a commutative diagram:
\small{
$$
\xymatrix{   
H^{k}(X,L) \ar[r] \ar[d]^{\pi^*}  &
 H^{k}(X) \ar[r]^{e_L^*} \ar[d]^{\pi^*}  &
 \oplus_i H^{k}(L_i,\RR) \ar[d]^{\pi^*} \ar[r]^{D_1} &
  H^{k+1}(X, L ) \ar[d]^{\pi^*} \\
  H^{k}(\widetilde X,Q) \ar[r] & 
 H^{k}(\widetilde X) \ar[r]^{e_Q^*} &
   \oplus_i H^{k}(Q_i) \ar[r]^{D_2} & 
   H^{k+1}(\widetilde X, Q ) 
}
$$ }
\normalsize{
Here we denoted the inclusions $e_L \colon L \to X$ and $e_Q \colon Q \to \widetilde{X}$.
 The first and fourth columns are isomorphisms; these correspond to the identity map. The third column is injective with cokernel $\oplus_i H^*(Q_i)/H^*(L_i)$; this is isomorphic to $\oplus_i H^{k-2}(L_i)\otimes \la \bx_i\ra $, because $Q_i=L_i \times S^2$. Thus we get a commutative diagram with exact columns:}

\small{
$$
\xymatrix{  &0 \ar[d] &0 \ar[d] &  \\
 H^{k}(X,L) \ar[r] \ar[d]^{\pi^*}  & H^{k}(X) \ar[r]^{e_L^*} \ar[d]^{\pi^*}  & \oplus_i H^{k}(L_i) \ar[d]^{\pi^*} \ar[r]^{D_1} & H^{k+1}(X, L ) \ar[d]^{\pi^*} \\
  H^{k}(\widetilde X,Q) \ar[r] & H^{k}(\widetilde X) \ar[r]^{e_Q^*} \ar[d] & \oplus_i H^{k}(Q_i) \ar[r]^{D_2} \ar[d] & H^{k+1}(\widetilde X, Q ) \\
   & \coker(\pi^*) \ar[d] \ar[r]^{\bar{e}_Q} &  \oplus_i H^{k-2}(L_i)\otimes \la \bx_i\ra \ar[d] &  \\
   &0  &0  &    
}
$$ }
\normalsize{
Of course, $\bar{e}_Q$ is the action induced by $e_Q^*$ on the quotient. In addition, the fact that first and fourth columns are the identity implies that $\mathrm{Im}(e_L^*)=\mathrm{Im}(e_Q^*)$.}

 Snake Lemma ensures that there is a exact sequence:
$$
0 \to \ker(e_L^*) \to \ker(e_Q^*) \to \ker(\bar{e}_Q) \to \coker(e_L^*) \to \coker(e_Q^*) \to \coker(\bar{e}_Q) \to 0.
$$
The maps are induced by $\pi^*$, except for the connecting map $\ker(\bar{e}_Q) \to \coker(e_L)$. The map $\pi^* \colon \ker(e_L^*) \to \ker(e_Q^*)$ is an isomorphism because the first column is an isomorphism and the diagram is commutative. In addition, taking into account that the fourth column is an isomorphism and that the diagram is commutative one can also check that $\pi^*$ is an isomorphism between $\mathrm{Im}(D_1)$  and $\mathrm{Im}(D_2)$.
Moreover:
$$
\mathrm{Im}(D_1)= \oplus_i H^*(Q_i)/ \ker (D_1) = \oplus_i H^{*}(L_i)/ \mathrm{Im}(e_L^*) = \coker(e_L^*),
$$
and the isomorphism is induced by the map that $\pi^*$ induces on the quotient. Simmilarly, $\coker(e_Q^*)$ is isomorphic to $\mathrm{Im}(D_2)$ via $\pi^*$.
This means that $\ker(\bar{e}_Q)=0=\coker(\bar{e}_Q)$ so,
$$
\coker (\pi^*)= \oplus_i  H^{*-2}(L_i)\otimes \la \bx_i \ra.
$$

Consider $\t_i$ the Poincar\' e dual of $Q_i \subset \widetilde{X}$ as constructed before. Then,
$$
\b \otimes \bx_i \longmapsto \mathrm{pr}^*(\b) \t_i
$$
is a splitting of the previous exact sequence.
\end{proof}

This result implies that there is an isomorphism of vector spaces between $H^*(\widetilde{X})$ and  $H^*(X) \oplus \oplus_{i=1}^r  H^*(L_i)\otimes \la \bx_i \ra $. The algebra structure of $H^*(\widetilde{X})$ induces an algebra structure on $H^*(X) \oplus \oplus_{i=1}^r H^*(L_i)\otimes \la \bx_i \ra $ that we compute in Proposition \ref{prop:cohom-alg}. This is necessary in order to decide whether the resolution $\widetilde{X}$ is formal or not, because formality condition involves products of cohomology classes.

\begin{proposition} \label{prop:cohom-alg}
There is an isomorphism
$$
 H^*(\widetilde{X})= H^*(X) \bigoplus \oplus_{i=1}^r  H^*(L_i)\otimes \la \bx_i \ra.
$$
Let $\a, \b \in H^*(X)$, $\g_i \in H^*(L_i)$, $\g_j' \in H^*(L_j)$ and let $e_i \colon L_i \to X$ be the inclusion. The wedge product on $ H^*(\widetilde{X})$ determines the following product on the left hand side:  
\begin{enumerate}
\item $\a \b = \a \wedge \beta$,
\item $\a (\g_i\otimes \bx_i) = (e_i^*(\a) \wedge \g_i) \otimes \bx_i$,
\item $(\g_i\otimes \bx_i) (\g_j' \otimes \bx_i)=0$ if $i\neq j$,
\item $(\g_i \otimes \bx_i)( \g_i' \otimes \bx_i) = -2(\g_i \wedge \g_i')PD[L_i] -(2-2g_i)( \o_i \otimes \bx_i)  $.
\end{enumerate}
\end{proposition}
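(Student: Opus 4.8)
The plan is to work throughout in the splitting of Proposition~\ref{prop:cohomology-short-sequence}, under which $H^*(\widetilde{X})=\rho^*H^*(X)\oplus\bigoplus_{i=1}^{r}\mathrm{pr}^*H^*(L_i)\cdot\tau_i$, where $\tau_i\in H^2(\widetilde{X})$ is the Poincar\'e dual of $Q_i$ and the isomorphism of the statement is $\gamma_i\otimes\bx_i\mapsto\mathrm{pr}^*(\gamma_i)\wedge\tau_i$. With this description, identities (1) and (3) are immediate: (1) holds because $\rho^*$ is a morphism of graded algebras, so $\alpha\beta=\alpha\wedge\beta$; (3) holds because $Q_i$ and $Q_j$ are disjoint for $i\neq j$, so $\tau_i$ and $\tau_j$ admit representatives with disjoint supports and the product of forms vanishes identically.

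For (2) I would use that $\tau_i$ has a representative supported in a small disc-bundle neighbourhood of $Q_i$ inside $P_i$. On that neighbourhood the resolution map $\rho$ is the blow-down $\chi$ followed by $\exp$, which is homotopic to $e_i\circ\mathrm{pr}$: deformation retract the neighbourhood onto $Q_i$, on which $\chi$ is the bundle projection to $L_i$ and $\exp$ is the inclusion of $L_i$ into $X$. Hence $\rho^*\alpha$ and $\mathrm{pr}^*e_i^*\alpha$ restrict to the same class on this neighbourhood, and multiplying by $\mathrm{pr}^*\gamma_i\wedge\tau_i$ gives $\rho^*\alpha\wedge(\mathrm{pr}^*\gamma_i\wedge\tau_i)=\mathrm{pr}^*(e_i^*\alpha\wedge\gamma_i)\wedge\tau_i$ in $H^*(\widetilde{X})$, which is (2).

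Identity (4) is the substance. Since $(\gamma_i\otimes\bx_i)(\gamma_i'\otimes\bx_i)=\mathrm{pr}^*(\gamma_i\wedge\gamma_i')\wedge\tau_i^2$, everything reduces to computing $\tau_i^2\in H^4(\widetilde{X})$. As $\tau_i^2$ has a representative supported near $Q_i$, its class in the splitting involves only the $i$-th summand, so $\tau_i^2=a_i\,\rho^*(PD_X[L_i])+\mathrm{pr}^*(\lambda_i)\wedge\tau_i$ for a constant $a_i$ and a class $\lambda_i\in H^2(L_i)$. I would determine $\lambda_i$ by restricting to the neighbourhood of $Q_i$, where $\widetilde{X}$ is the fibrewise Eguchi--Hanson resolution $P_i$, and invoking the relation coming from Proposition~\ref{prop:cohomology-exceptional-divisor} (equivalently the displayed identity $[\tau_i\wedge\tau_i]=-(2-2g_i)[\mathrm{pr}^*\omega_i\wedge\tau_i]$ on $H^*(P_i)$ recorded just before the proposition), using that $\rho^*PD_X[L_i]$ restricts to zero on $Q_i$ and so does not interfere; this yields $\lambda_i=-(2-2g_i)\omega_i$. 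To determine $a_i$ I would pair with $\mathrm{pr}^*\mathrm{vol}_{L_i}$, with $\mathrm{vol}_{L_i}$ of total mass $1$: by the projection formula $\langle\tau_i^2\wedge\mathrm{pr}^*\mathrm{vol}_{L_i},[\widetilde{X}]\rangle$ equals the self-intersection number of $Q_i$ computed on each $\mathbb{CP}^1$-fibre, that is $\langle e(N_{Q_i/\widetilde{X}})|_{\mathbb{CP}^1},[\mathbb{CP}^1]\rangle=-2$, since $N_{Q_i/\widetilde{X}}$ restricts on each fibre to the $\mathcal{O}(-2)$ that is the normal bundle of the exceptional $(-2)$-curve of the minimal resolution of $\mathbb{C}^2/\{\pm1\}$; the $\lambda_i$-term contributes nothing because $\lambda_i\wedge\mathrm{vol}_{L_i}\in H^5(L_i)=0$, and comparing with $\langle\rho^*PD_X[L_i]\wedge\mathrm{pr}^*\mathrm{vol}_{L_i},[\widetilde{X}]\rangle$ pins down $a_i=-2$. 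Substituting $\tau_i^2=-2\,\rho^*(PD_X[L_i])-(2-2g_i)\,\mathrm{pr}^*\omega_i\wedge\tau_i$ and using $\mathrm{pr}^*(\gamma_i\wedge\gamma_i')\wedge\rho^*PD_X[L_i]=\rho^*\big((e_i)_*(\gamma_i\wedge\gamma_i')\big)$ (valid since $PD_X[L_i]$ is supported near $L_i$, so only $e_i^*$ of a lift of $\gamma_i\wedge\gamma_i'$ enters) gives (4).

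The step I expect to be the main obstacle is precisely the bookkeeping of constants in (4): identifying $N_{Q_i/\widetilde{X}}$ with the $\mathcal{O}(-2)$ of the $A_1$-resolution compatibly with the chosen orientations, and keeping track of the factors of $2$ produced by the orbifold structure of $X$ along $L$, so that $PD_X[L_i]$ and $\tau_i$ carry exactly the normalizations for which the intersection pairings and the pushforward along $\rho$ behave as used above. All the remaining steps are formal consequences of the splitting of Proposition~\ref{prop:cohomology-short-sequence} together with standard properties of Poincar\'e duality and the projection formula.
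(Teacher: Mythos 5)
Your proposal is correct and follows essentially the same route as the paper: the product is computed through the splitting of Proposition \ref{prop:cohomology-short-sequence} with $\gamma_i\otimes\bx_i\mapsto \mathrm{pr}^*(\gamma_i)\wedge\tau_i$, items (1)--(3) are straightforward, and the crux $\tau_i^2$ is determined by combining the relation $[\tau_i\wedge\tau_i]=-(2-2g_i)[\mathrm{pr}^*\omega_i\wedge\tau_i]$ from Proposition \ref{prop:cohomology-exceptional-divisor} with the fibrewise self-intersection $[Q_x][Q_x]=-2$, exactly as in the paper's proof (your pairing with a unit-mass volume form on $L_i$ is just a repackaging of the paper's fibre integration identifying the $H^*(X)$-component with $-2\,PD[L_i]$).
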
 
\begin{proof}
Let $s \colon  \oplus_{i=1}^r H^*(L_i)\otimes \la \bx_i \ra \to H^*(\widetilde{X})$ be the splitting map constructed in the proof of Proposition \ref{prop:cohomology-short-sequence}. Then, the isomorphism is determined by:
$$
\rT =(\rho^*,s) \colon H^*(X) \oplus \oplus_{i=1} H^*(L_i)\otimes \la \bx_i \ra \to H^*(\widetilde{X}).
$$
In order to obtain a formula for the product between forms $\eta$, $\eta'$ we have to compute
$(\rT)^{-1} \left( \rT \eta \wedge \rT \eta' \right)$.
All the statements are evident except for the last one. We only check $\bx_i^2= -2PD[L_i] - (2-2g_i)(\o_i\otimes \bx_i)$, the announced formula is deduced from this and the fact that $H^*(\widetilde X)$ is an algebra. First of all, $\rT \bx_ i \wedge \rT \bx_i = [\t_i \wedge \t_i]$; we now compute $\rT^{-1}[\t_i \wedge \t_i]$. On the one hand taking into account the equality
$$
[ \t_i \wedge \t_i ] = - (2-2g_i)[\mathrm{pr}^*(\o_i) \wedge  \t_i],
$$ 
 we obtain that the restriction of $\rT^{-1}[\t_i \wedge \t_i]$ to $H^*(L_i)\otimes \la \bx_i \ra$ is
$
 - (2-2g_i)(\o_i \otimes \bx_i) .
$
On the other hand, note first that if $x\in L_i$ then $\t_i|_{P_x}$ is the Thom form of $Q_x \subset P_x$ because $\t_i$ is the Thom form of $Q_i \subset P_i$. Thus:
$$
\int_{P_x}{\tau_i \wedge \tau_i}= [Q_x][Q_x]=-2.
$$
The restriction of $\rT^{-1} [\t_i \wedge \t_i]$ to $H^*(X)$  has compact support around $L_i$ and
$$
\int_{\n_x}{ \rho^*(\tau_i \wedge \tau_i)}= \int_{\n_x -0}{ \rho^*(\tau_i \wedge \tau_i)}=
\int_{P_x-Q_x }{ \tau_i \wedge \tau_i}= \int_{P_x }{ \tau_i \wedge \tau_i}=-2.
$$
The restriction in thus equal to $-2PD[L_i]$.
\end{proof}

\section{Non-formal compact $\Gtwo$ manifold with $b_1=1$} \label{sec:const}

Nilpotent Lie algebras that have a closed left-invariant $\Gtwo$ structure are classified in \cite{CF}; from these one obtain nilmanifolds with an invariant closed $\Gtwo$ structure. Of course, excluding the $7$-dimensional torus, these are non-formal and have $b_2 \geq 2$. From a $\Z_2$ action on a nilmanifold, in \cite{FFKM} authors construct a formal orbifold whose isotropy locus are $16$ disjoint $3$-tori; then they prove that its resolution is also formal. In this section we follow the same process to construct first a non-formal $\Gtwo$ orbifold with $b_1=1$ from a nilmanifold; its isotropy locus consists of $16$ disjoint non-formal nilmanifolds. Later we prove that its resolution is also non-formal and does not admit any torsion-free $\Gtwo$ structure.

\subsection{Orbifold with $b_1=1$}

Let us consider the Lie algebra $\frg$ with structure equations
$$
(0,0,0,12,23,-13,-2 (16)+ 2 (25) + 2(26) - 2(34) ),
$$
and let $(e_1,e_2,e_3,e_4,e_5,e_6,e_7)$ be the generators of  $\frg$ that verify the structure equations, that is, $[e_1,e_2]=-e_4$, $[e_2,e_3]=-e_5$ and so on.
Recall that the simply connected Lie group $G$ associated to $\frg$ is the vector space $\frg$ endowed with the product $*$ determined by the Baker-Campbell-Hausdorff formula.

\begin{remark}
The Lie algebra $\frg$ belongs to the $1$-parameter family of algebras $147E1$ listed in Gong's classification  \cite{G}; we choose the parameter $\l=2$. The associated Lie group admits an invariant closed $\Gtwo$ structure as proved in \cite{CF}.
\end{remark}

Define $u_1=e_1$, $u_2=e_2$, $u_3=e_3$, $u_4= \frac{1}{2}e_4$, $u_5= \frac{1}{2}e_5$, $u_6= \frac{1}{2}e_6$ and
$u_7= \frac{1}{6}e_7$. 
\begin{proposition}\label{prop:gh}
If $x=\sum_{k=1}^7{\l_k u_k}$ and $y= \sum_{k=1}^7{\m_k u_k}$ then
\begin{align*} 
x*y=  &( \l_1 + \m_1)u_1 + (\l_2 + \m_2)u_2 + (\l_3 + \m_3)u_3 + (\l_4 + \m_4 - ( \l_1\m_2 - \l_2\m_1))u_4 \\
    &+ (\l_5 + \m_5 - ( \l_2\m_3 - \l_3\m_2))u_5 + (\l_6 + \m_6 + (\l_1\m_3 - \l_3\m_1))u_6 \\
 &+ (\l_7 + \m_7 + (\l_1-\m_1 - \l_2 + \m_2)(\l_1 \m_3 - \l_3 \m_1) - (\l_3- \m_3)(\l_1 \m_2 - \m_2 \l_1 ))u_7 \\
 &+ (- (\l_2 - \m_2)(\l_2\m_3-\l_3 \m_2)  + 3 (\l_1 \m_6 +  \l_6 \m_1))u_7 \\
   &+  (- 3 (\l_2 \m_5 -  \l_5 \m_2) - 3( \l_2 \m_6 - \l_6 \m_2) + 3(\l_3 \m_4 +  \l_4\m_3)  )u_7.
\end{align*}
\end{proposition}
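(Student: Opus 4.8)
The plan is to read the formula straight off the Baker--Campbell--Hausdorff (BCH) series, which terminates because $\frg$ is nilpotent of small class. First I would record that class: the structure equations give $[\frg,\frg]=\Span{e_4,e_5,e_6,e_7}$ (note $e_7$ already lies in $[\frg,\frg]$, since $[e_1,e_6]=2e_7$), then $[\frg,[\frg,\frg]]=\Span{e_7}$, and $[\frg,\Span{e_7}]=0$ because $e_7$ is central. So $\frg$ has nilpotency class $3$ and every iterated bracket with four arguments vanishes. Consequently
$$
x*y = x+y+\tfrac{1}{2}[x,y]+\tfrac{1}{12}\bigl([x,[x,y]]-[y,[x,y]]\bigr)
= x+y+\tfrac{1}{2}[x,y]+\tfrac{1}{12}[x-y,[x,y]].
$$

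Next I would write the brackets in the rescaled basis. From the structure equations (and the convention $d\a(x,y)=-\a([x,y])$ used for the Chevalley--Eilenberg differential) one reads off $[e_1,e_2]=-e_4$, $[e_2,e_3]=-e_5$, $[e_1,e_3]=e_6$, $[e_1,e_6]=2e_7$, $[e_2,e_5]=-2e_7$, $[e_2,e_6]=-2e_7$, $[e_3,e_4]=2e_7$, with every other bracket of basis vectors zero. Since $e_k=2u_k$ for $k\in\{4,5,6\}$ and $e_7=6u_7$, this becomes
$$
[u_1,u_2]=-2u_4,\qquad [u_2,u_3]=-2u_5,\qquad [u_1,u_3]=2u_6,
$$
$$
[u_1,u_6]=6u_7,\qquad [u_2,u_5]=-6u_7,\qquad [u_2,u_6]=-6u_7,\qquad [u_3,u_4]=6u_7,
$$
all remaining brackets being zero; in particular $u_7$ is central and $u_4,u_5,u_6$ bracket only into $\Span{u_7}$.

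Then I would substitute $x=\sum_k\l_k u_k$, $y=\sum_k\m_k u_k$ and collect coefficients degree by degree, using bilinearity $[x,y]=\sum_{i<j}(\l_i\m_j-\l_j\m_i)[u_i,u_j]$. The $u_1,u_2,u_3$-coefficients are just $\l_k+\m_k$. For $u_4,u_5,u_6$ only $x+y+\tfrac12[x,y]$ contributes, and the brackets above give directly the stated expressions in terms of $\l_1\m_2-\l_2\m_1$, $\l_2\m_3-\l_3\m_2$, $\l_1\m_3-\l_3\m_1$. The $u_7$-coefficient splits into a bilinear part, coming from the $u_7$-component of $\tfrac12[x,y]$ (the terms with one index in $\{4,5,6\}$), and a cubic part, coming from $\tfrac{1}{12}[x-y,[x,y]]$; for the latter, centrality of $u_7$ means only the $u_4,u_5,u_6$-components of $[x,y]$ matter, and $[w,u_4]=6w_3u_7$, $[w,u_5]=-6w_2u_7$, $[w,u_6]=6(w_1-w_2)u_7$ reduce it to a short polynomial in the $\l_i,\m_i$. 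Assembling these pieces produces the formula in the statement.

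The one genuinely error-prone point is this last assembly of the $u_7$-coefficient, where the antisymmetrisations $\l_i\m_j-\l_j\m_i$, the signs of the seven structure constants, and the scaling factors $e_k\mapsto u_k$ all enter simultaneously. Abbreviating $p=\l_1\m_2-\l_2\m_1$, $q=\l_2\m_3-\l_3\m_2$, $s=\l_1\m_3-\l_3\m_1$ (so that $[x,y]=-2p\,u_4-2q\,u_5+2s\,u_6+(\cdots)u_7$) and using that $u_7$ is central keeps the bookkeeping manageable; beyond this computation the proof needs no idea other than the BCH formula.
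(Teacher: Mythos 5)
Your proposal is correct and follows essentially the same route as the paper: truncate the Baker--Campbell--Hausdorff series at the cubic term using that $\frg$ is $3$-step nilpotent, tabulate the nonzero brackets in the rescaled basis ($[u_1,u_2]=-2u_4$, $[u_2,u_3]=-2u_5$, $[u_1,u_3]=2u_6$, $[u_1,u_6]=6u_7$, $[u_2,u_5]=-6u_7$, $[u_2,u_6]=-6u_7$, $[u_3,u_4]=6u_7$, with $u_7$ central), and collect coefficients. Your bracket table and the reduction of the cubic term via centrality of $u_7$ agree with the paper's list of double brackets, and both arguments leave the final bookkeeping of the $u_7$-coefficient to a routine expansion.
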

\begin{proof}
Being $\frg$ is $3$-step, the Baker-Campbell-Hausdorff formula yields:
$$
x*y = x + y  + \frac{1}{2} [x,y] + \frac{1}{12} \left( [x,[x,y]] - [y,[x,y]] \right).
$$
Taking into account that $u_7\in Z(\frg)$ and that $ [u_i,[u_j,u_k]]= 0$ if $i\geq 4$ or $j \geq 4$ or $k \leq 4$, it follows:
 \begin{align*}
x*y =&
\sum_{k=1}^7{(\l_k+ \m_i) u_i} +   \frac{1}{2} \sum_{1 \leq i<j \leq 7}(\l_i\m_j - \l_j\m_i)[u_i,u_j] \\
&+ \frac{1}{12} \sum_{1 \leq k\leq 3} (\l_k-\m_k) \sum_{1 \leq i<j \leq 3} {(\l_i\m_j - \l_j\m_i)[u_k,[u_i,u_j]]};
\end{align*}
The non-zero combinations $[u_i,u_j]$ and $[u_k,[u_i,u_j]]$ are:
\begin{align*}
[u_1,u_2]=& -2u_4, & [u_2,u_5]=& -6u_7, &  [u_3,[u_1,u_2]]=& -12 u_7 \\
[u_1,u_3]=& 2 u_6, & [u_2,u_6]=& -6u_7, &  [u_1,[u_1,u_3]]=& 12 u_7\\
[u_1,u_6]=& 6u_7, &  [u_3,u_4]=&6u_7, &  [u_2,[u_1,u_3]]=&  -12 u_7 , \\
[u_2,u_3]=& -2u_5, & & & [u_2,[u_2,u_3]]=& 12u_7. 
\end{align*}
The announced formula easily follows from this.
\end{proof}

Proposition \ref{prop:gh} ensures that
$$
\G= \Big\{ \sum_{i=1}^7{n_i u_i}, \mbox{ s.t. } n_i \in \Z \Big\},
$$
is a discrete subgroup of $G$, which is of course co-compact. 
Indeed, a straightforward computation gives a fundamental domain for the left action of $\G$ on $G$:

\begin{proposition} \label{prop:lattice}
 A fundamental domain for left the action of $\G$ on $G$ is 
$$
\mathcal{D}=\Big\{  \sum_{i=1}^7{t_i u_i}, \mbox{ s.t. }  0 \leq t_i \leq 1 \Big \}.
$$
\end{proposition}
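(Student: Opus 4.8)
The plan is to prove that the half-open cube $\mathcal{D}^\circ=\big\{\sum_{i=1}^7 t_i u_i\ \text{s.t.}\ 0\le t_i<1\big\}$ meets every orbit of the left $\Gamma$-action on $G$ exactly once; equivalently, that the map $\Gamma\times\mathcal{D}^\circ\to G$, $(\gamma,d)\mapsto\gamma*d$, is a bijection. Since $\Gamma$ acts by homeomorphisms, this gives at once that the closed cube $\mathcal{D}$ of the statement is a fundamental domain in the usual sense (its translates tile $G$ and overlap only along their boundaries). Two facts will be used throughout: that $\Gamma$ is a subgroup of $G$ (already remarked), with $\gamma^{-1}=-\gamma$ in the coordinates $u_1,\dots,u_7$ because every higher bracket in $x*(-x)$ vanishes; and that the product of Proposition \ref{prop:gh} is ``lower triangular'' with respect to the three blocks $B_1=\{u_1,u_2,u_3\}$, $B_2=\{u_4,u_5,u_6\}$, $B_3=\{u_7\}$: the $B_1$-coordinates of $x*y$ are just $\lambda_i+\mu_i$, the $B_2$-coordinates involve only the $B_1$-coordinates of $x$ and $y$ as a bilinear correction, and $u_7$ is central.

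For surjectivity I would normalise an arbitrary $g=\sum\lambda_i u_i$ into $\mathcal{D}^\circ$ in three stages. First, left-multiply by $-\gamma_1$ with $\gamma_1=\sum_{i=1}^3\lfloor\lambda_i\rfloor u_i\in\Gamma$; the $B_1$-coordinates of the product are then $\lambda_i-\lfloor\lambda_i\rfloor\in[0,1)$. Second, writing the result as $\sum\mu_i u_i$, left-multiply by $-\gamma'$ with $\gamma'=\sum_{j=4}^6\lfloor\mu_j\rfloor u_j\in\Gamma$; since $\gamma'$ has vanishing $B_1$-part, Proposition \ref{prop:gh} shows the $B_1$-coordinates stay equal to $\mu_1,\mu_2,\mu_3$ while the $B_2$-coordinates become $\mu_j-\lfloor\mu_j\rfloor\in[0,1)$. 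Third, subtract the integer part of the seventh coordinate by left multiplication by a multiple of $u_7$, which is central and hence perturbs nothing else. The product of the three lattice elements lies in $\Gamma$ and carries $g$ into $\mathcal{D}^\circ$.

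For injectivity, suppose $d_2=\gamma*d_1$ with $d_1=\sum s_i u_i$, $d_2=\sum t_i u_i\in\mathcal{D}^\circ$ and $\gamma=\sum n_i u_i\in\Gamma$. Comparing $B_1$-coordinates gives $t_i=n_i+s_i$ with $t_i,s_i\in[0,1)$, forcing $n_i=0$ and $t_i=s_i$ for $i=1,2,3$; then $\gamma$ has vanishing $B_1$-part, so the $B_2$-coordinates of $\gamma*d_1$ are $n_j+s_j$, forcing $n_j=0$, $t_j=s_j$ for $j=4,5,6$; and centrality of $u_7$ gives $t_7=n_7+s_7$, so $n_7=0$, $t_7=s_7$. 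Hence $\gamma$ is the identity and $d_1=d_2$.

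I do not expect any genuine obstacle: the whole argument rests on the triangular shape of the multiplication formula in Proposition \ref{prop:gh}. The only point requiring care is to perform the three normalisation steps \emph{in the correct order} — the $B_1$-block first, then $B_2$, then the center — and to verify, by direct inspection of that formula, that left multiplication by a lattice element supported on a later block leaves the coordinates already normalised in the earlier blocks unchanged.
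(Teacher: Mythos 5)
Your argument is correct: the normalisation block-by-block (first $u_1,u_2,u_3$, then $u_4,u_5,u_6$, then the central $u_7$) and the matching injectivity check are exactly the ``straightforward computation'' the paper invokes without writing out, since Proposition~\ref{prop:lattice} is stated there with no explicit proof. The key points you use — integrality and triangular shape of the product in Proposition~\ref{prop:gh}, $\gamma^{-1}=-\gamma$, and centrality of $u_7$ — are precisely what make that computation work, so your proposal fills the gap in the intended way.
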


According to \cite[Lemma 5]{CF}, the group $G$ admits an invariant closed $\Gtwo$ structure 
determined by:
$$
\vp= v^{127} + v^{347} + v^{567} + v^{135} - v^{236} - v^{146} - v^{245}.
$$ 
where:

\begin{minipage}[t]{0.48\textwidth}
\begin{itemize}
\item $v^1=\sqrt{3}(2e^1 + e^5 - e^2 + e^6)$;
\item $v^2=3e^2 - e^5 + e^6$;
\item $v^3=e^3 + 2e^4$;
\item $v^4=\sqrt{3}(e^3 + e^7)$;
\end{itemize}
\end{minipage}
\begin{minipage}[t]{0.48\textwidth}
\begin{itemize}
\item $v^5=\sqrt{2}(e^6 - e^5)$; 
\item $v^6=\sqrt{6}(e^5 + e^6)$,
\item $v_7=2\sqrt{2}(e^4 - e^3)$.
\end{itemize}
\end{minipage}

Consider $M=G/\G$;  points
of $M$ will be denoted by $[x]$, for some $x \in G$. The nilmanifold $M$ inherits a closed $\Gtwo$ structure that we also denote by $\vp$.
We now define an involution $\j$ on $M$ such that $\j^*\vp= \vp$. For that purpose it is
sufficient to define an order $2$ isomorphism $\j \colon G \to G$ of $G$ with $\j^*\vp=\vp$, and $\jota \G = \G$. The desired map is:
$$
\j(e_k)=e_k, \quad k \in{3,4,7}, \qquad \j(e_k)=-e_k, \quad k\in \{1,2,5,6\}.
$$
Looking at the structure constants of $G$ it becomes clear that $\j$ is
an automorphism of $\frg$. Baker-Campbell-Hausdorff formula ensures
that $\j$ is an homomorphism. In addition, it is clear that $\j(\G)\subset \G$.
Finally, one can easily deduce that $\j^*(\vp)= \vp$.

We define the orbifold $X=M/\j$, which has a closed $\Gtwo$ structure determined by $\vp$. We
now study its singular locus:

\begin{proposition}
The isotropy locus has $16$ connected components; these are all diffeomorphic and their universal
covering is the Heisenberg group.
Let us define $H_0=\{ \l_3 u_3 + \l_4 u_4 + \l_7 u_7, \mbox { s.t. } \l_j \in \RR  \}$ and 
$\mathcal{E}=\{ \e_1 u_1 + \e_2 u_2 + \e_5 u_5 + \e_6 u_6, \mbox{ s.t. } \e_j \in \{0, \frac{1}{2}\} \}$. The $16$ connected components of the isotropy locus are:
$$
H_\e=[L_{\e}H_0], \quad \e \in \mathcal{E},
$$
where $L_\e$ denotes the left translation on $G$ by the element $\e \in \mathcal{E}$.
\end{proposition}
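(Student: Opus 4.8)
The plan is to describe the preimage of the singular locus under the quotient map $\pi\colon G\to M$ and then unravel it modulo the lattice $\G$. Since the group acting on $M$ is $\{\rId,\j\}$, the singular locus of $X$ is the fixed locus $\Fix(\j)\subset M$ of the induced involution, and for $x\in G$ the class $[x]$ lies in $\Fix(\j)$ exactly when $\j(x)=\g x$ for some $\g\in\G$, i.e. when $\j(x)* x^{-1}\in\G$. So the first step is to compute $\j(x)* x^{-1}$ in the exponential coordinates of Proposition \ref{prop:gh}. Writing $x=\exp\big(\sum_{k=1}^{7}\lambda_k u_k\big)$ and splitting $\sum\lambda_k u_k=\xi_{+}+\xi_{-}$ into its components along the $(+1)$-eigenspace $\la u_3,u_4,u_7\ra$ and the $(-1)$-eigenspace $\la u_1,u_2,u_5,u_6\ra$ of $\j$, the Baker--Campbell--Hausdorff formula together with the structure equations of $\frg$ give, once one checks that the degree-three term drops out because $[\xi_{+},[\xi_{+},\xi_{-}]]=0$,
\begin{align*}
\j(x)* x^{-1} &= -2\xi_{-}-[\xi_{+},\xi_{-}]\\
&= -2\lambda_1 u_1-2\lambda_2 u_2-(2\lambda_5+2\lambda_2\lambda_3)\,u_5-(2\lambda_6-2\lambda_1\lambda_3)\,u_6 .
\end{align*}
Hence $[x]\in\Fix(\j)$ precisely when $2\lambda_1,\,2\lambda_2,\,2\lambda_5+2\lambda_2\lambda_3,\,2\lambda_6-2\lambda_1\lambda_3\in\Z$, the coordinates $\lambda_3,\lambda_4,\lambda_7$ being free. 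This exhibits the singular locus as three-dimensional, in agreement with Lemma \ref{lem:3-dim}, and shows that $\frh_0:=\la u_3,u_4,u_7\ra$ is a Heisenberg subalgebra of $\frg$, so $H_0=\exp\frh_0$ (the set appearing in the statement) is the simply connected Heisenberg group and is contained in the fixed locus of $\j$ on $G$.

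The second step is to prove that, modulo the left $\G$-action, the set $F=\{x\in G:\j(x)* x^{-1}\in\G\}$ is exactly the union of the $16$ cosets $\e* H_0$, $\e\in\mathcal{E}$. For the inclusion $\e* H_0\subset F$ I would note that for $h\in H_0$ one has $\j(\e* h)=\j(\e)* h=(-\e)* h$, whence $\j(\e* h)*(\e* h)^{-1}=(-\e)*(-\e)=-2\e$, which lies in $\G$ because $2\e_j\in\Z$. For exhaustion, starting from $x\in F$ I would left-translate by suitable powers of $u_1$ and $u_2$ to bring $\lambda_1,\lambda_2$ into $\{0,\tfrac12\}$; the constraints then make $\lambda_5+\e_2\lambda_3$ and $\lambda_6-\e_1\lambda_3$ half-integers, which can in turn be brought into $\{0,\tfrac12\}$ by left-translating by powers of $u_5$ and $u_6$, and evaluating the resulting coordinates with Proposition \ref{prop:gh} shows that the element now lies in some $\e* H_0$. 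For disjointness of the images $H_\e=[\e* H_0]$ I would compare coordinates of two equal classes $[\e* h]=[\e'* h']$: the $u_1$- and $u_2$-coordinates are additive under $*$, so they force $\e_1=\e_1'$ and $\e_2=\e_2'$; substituting the resulting relation between the $u_3$-coordinates into the formulas of Proposition \ref{prop:gh} for the $u_5$- and $u_6$-coordinates then forces $\e_5=\e_5'$ and $\e_6=\e_6'$. Since $|\mathcal{E}|=16$, this yields exactly $16$ pairwise disjoint connected sets $H_\e$, which are therefore the connected components of the singular locus.

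The third step is to identify each $H_\e$ up to diffeomorphism. The Heisenberg group $H_0$ acts transitively on $H_\e$ on the right by $[\e* h]\cdot k=[\e* h* k]$, with stabiliser of $[\e]$ equal to $\Lambda_\e=H_0\cap\e^{-1}\G\e$, a discrete subgroup of $H_0$; thus $H_\e\cong\Lambda_\e\backslash H_0$. Computing the conjugation $\e* h* \e^{-1}$ for $h\in H_0$ (where only $\e_1,\e_2$ enter, since $[u_5,\cdot]$ and $[u_6,\cdot]$ vanish on $\frh_0$) identifies $\Lambda_\e$ as a cocompact lattice generated by elements $a,b,c$ with $c$ central and $[a,b]=c^{6}$; in particular $\Lambda_\e$ is abstractly isomorphic to $\Lambda_0=H_0\cap\G=\la u_3,u_4,u_7\ra$ for every $\e$. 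Since $H_0$ is simply connected, $H_0\to H_\e$ is the universal covering; and any isomorphism $\Lambda_\e\cong\Lambda_0$ extends to an automorphism of $H_0$ descending to a diffeomorphism $H_\e\cong\Lambda_0\backslash H_0$. Hence the $16$ components are diffeomorphic compact nilmanifolds whose universal covering is the Heisenberg group, as claimed.

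The point demanding care is that the cosets $\e* H_0$ are not mutually parallel: along $\e* H_0$ the $u_5$- and $u_6$-coordinates depend affinely on the $u_3$-coordinate, so none of the three substeps above (exhaustion, disjointness, the description of $\Lambda_\e$) can be carried out by linear algebra on $\frg$ — one must use the explicit group law of Proposition \ref{prop:gh} throughout, tracking the quadratic correction terms carefully. By contrast the Baker--Campbell--Hausdorff bookkeeping in the first step is routine once one has recorded which iterated brackets of the $u_i$ vanish.
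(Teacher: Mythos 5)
Your proof is correct and rests on the same engine as the paper's --- explicit manipulations with the group law of Proposition \ref{prop:gh}, the lattice $\G$ and the cosets $\e*H_0$ --- but it is organized differently and is in fact more complete. The paper avoids your closed-form fixed-point criterion: it first treats elements with coordinates only along $u_1,u_2,u_5,u_6$ (for which your correction term $[\xi_+,\xi_-]$ vanishes) and reduces the general case to this one by factoring a fixed point as $x_1*x_2$ with $x_2\in H_0$ and cancelling $x_2$; your one-step Baker--Campbell--Hausdorff computation giving $-2\xi_- -[\xi_+,\xi_-]$ is a valid alternative (the third-order term does drop out, since $[\xi_+,\xi_-]$ lies in $\la e_5,e_6\ra$, which commutes with $\la e_3,e_4,e_7\ra$), at the cost of carrying the quadratic terms through the normalization and disjointness steps, exactly as you warn. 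More importantly, you verify several points the paper leaves implicit: that each $[\e*H_0]$ really lies in the fixed locus (the involution moves $\e*h$ by left multiplication by $-2\e\in\G$), that the sixteen sets $H_\e$ are pairwise disjoint --- which is what makes the count $16$ an actual statement --- and that each component is $\Lambda_\e\backslash H_0$ with $\Lambda_\e=H_0\cap\e^{-1}*\G*\e$ a lattice abstractly isomorphic to $H_0\cap\G$, whence the mutual diffeomorphism via Malcev rigidity; the paper only records that $H_0$ is a subgroup with Heisenberg Lie algebra. In short, the paper's factorization trick buys a shorter computation, while your criterion plus the stabiliser-lattice analysis buys precisely the disjointness, the count, and the diffeomorphism assertions that the proposition actually makes.
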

\begin{proof}
It is clear that $H_0$ is a connected component of $\Fix(\j)$ that contains $0$, which is the unit of $G$. Being $\j$ an homomorphism, we conclude that $H_0$ is a subgroup of $G$. It is thus sufficient to prove that the Lie algebra $\frh$ of $H_0$ is the Heisenberg algebra. This is of course true because
 $\frh = \la e_3, e_4, e_7 \ra$ with $[e_3,e_4]=e_7$ and $[e_j,e_7]=0$ for $j\in \{3,4\}$.
 
Let $\mathcal{K}= \{1,2,5,6\}$ and consider $x= \sum_{k\in \mathcal{K}}{\l_k u_k} \in \mathcal{D}$, that is, $\l_k \in [0,1]$. We now check that if $\g*x = \j(x)$ for some $\gamma \in \Gamma $ then $[x] \in H_\e$ for some $\e \in \mathcal{E}$. Let us denote $\gamma = \sum_{k=1}^{7}{n_k u_k}$; taking into account Proposition \ref{prop:gh} one obtains:
 \begin{align*}
 \g*x =& (n_1 + \l_1)u_1 + (n_2 + \l_2)u_2  + n_3u_3 +  (n_4 - n_1 \l_2 + n_2 \l_1)u_4 \\
  &+  (n_5 + \l_5 + n_3 \l_2)u_5 + (n_6 + \l_6 - n_3 \l_1 )u_6 + \l' u_7,
 \end{align*}
 for some $\l' \in \RR$. The equation $\j(x)= \g*x$ yields immediatly to $2\l_j =- n_j$ for $j=\{1,2\}$ and $n_3=0$. 
 Taking this into account, $n_4 - n_1 \l_2 + n_2 \l_1= n_4$, $n_5 + \l_5 + n_3 \l_2= n_5 + \l_5$, $n_6 + a_6 - n_3 \l_1= n_6 + \l_6$ and thus $n_4=0$, $2\l_5=-n_5$ and $2\l_6=-n_6$. 
 Thus, $x= -\frac{1}{2} \sum_{k \in \mathcal{K}} n_k u_k$; and thus $x \in H_\e$ for some $\e \in \mathcal{E}$.
 
 We now let $[y]$ be an isotropy point; one can write:
  $y= x_1*x_2$; with $x_1= \sum_{k\in \mathcal{K}}{\l_k u_k}$ and $x_2=\sum_{k\notin \mathcal{K}}\m_k u_k \in H_0$. 
 The choice becomes clear from the equality:
 \begin{align*}
 x_1*x_2=&
 \l_1 u_1 + \l_2 u_2 + \m_3u_3 +  \m_4 u_4 +
  ( \l_5 - \l_2 \m_3 )u_5 + ( \l_6 + \a_1 \m_3 )u_6 \\
 &+( \m_7+ (\l_1 - \l_2)(\l_1 \m_3) + \l_2 \m_3)u_7,
  \end{align*}
  that is of course deduced from Proposition \ref{prop:gh}.
 
 Using this decomposition we obtain the equality
 $\g*x_1 x_2= \j(y)= \j(x_1) x_2$ that implies $\j(x_1)=\g x_1$. Take $x_1' \in \mathcal{E}$ with $x_1= \g' x_1'$,
 then
 $
 [y]=[\g' x_1' x_2]=[x_1' x_2] \in [L_{x_1'}H_0].
 $
\end{proof}

\subsection{Non-formality of the resolution}

We start by computing the real cohomology algebra of the orbifold. Nomizu's theorem  \cite{Nomizu} ensures that $(\L^* \frg^*,d)$ is the minimal model of $M$. Taking into
account that $H^*(X)=H^*(M)^{\ZZ_2}$ we obtain that $((\L^* \frg^*)^{\ZZ_2}, d)$ is a model for $X$.
The cohomology of $X$ is:
\begin{align*}
H^1(X)=& \la [e^3] \ra,  \\
H^2(X)=& \la [e^{25}], [e^{15}- e^{26}], [e^{15}- e^{34}] \ra, \\
H^3(X)=& \la [e^{235}], [e^{135}], [e^{356}], [e^{124}],  [e^{146}],  [e^{245}], [e^{127} + 2e^{145}], \\
&\,\, [e^{125} + e^{167} - e^{257} - 2 e^{456} - e^{347}]\ra .
\end{align*}
We now prove that $X$ is not formal.
\begin{proposition} \label{prop:X-nf}
The triple Massey product $\la [e^3],[e^{15}- e^{26}],[e^3] \ra$ of $((\L^* \frg^*)^{\ZZ_2},d)$ is not trivial. Therefore, $X$ is not formal.
\end{proposition}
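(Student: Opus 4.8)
The plan is to produce a non-trivial triple Massey product and invoke Corollary~\ref{cor:massey-form}. As noted just above, $\mathcal{A}:=((\L^*\frg^*)^{\ZZ_2},d)$ is a model for $X$, so $X$ and $\mathcal{A}$ have the same minimal model, and it suffices to show that $\la [e^3],[e^{15}-e^{26}],[e^3]\ra$ is defined and non-trivial in $H^*(\mathcal{A})$.

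First I would check that the product is defined, i.e. that $[e^3]\cdot[e^{15}-e^{26}]=0$ in $H^3(\mathcal{A})$. Using the structure equations one computes $e^3\wedge(e^{15}-e^{26})$ and exhibits a $\ZZ_2$\dash invariant $2$-form $a_{12}$ with $da_{12}=\bar{a}_1 a_2$, where $a_1=a_3=e^3$ and $a_2=e^{15}-e^{26}$; the natural candidate is assembled from $e^{56}$ (corrected if necessary by a multiple of $e^{37}$), these being essentially the only invariant $2$-forms whose differentials involve the monomials $e^{135}$ and $e^{236}$. In particular the coefficient of $e^{56}$ in $a_{12}$ is forced to be non-zero, since among invariant $2$-forms only $d(e^{56})$ contains $e^{135}$. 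Because $a_2$ has even degree one has $\bar{a}_1 a_2=-a_2 a_3$, so one may take $a_{23}=-a_{12}$, and then this defining system represents the Massey product by
$$
\bar{a}_1 a_{23}+\bar{a}_{12} a_3 = 2\,e^3\wedge a_{12}.
$$

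Next I would identify the class of $2\,e^3\wedge a_{12}$ in $H^3(\mathcal{A})=H^3(X)$ modulo the indeterminacy of $\la [e^3],[e^{15}-e^{26}],[e^3]\ra$, which is $[e^3]\cdot H^2(X)+H^2(X)\cdot[e^3]=[e^3]\cdot H^2(X)$ by graded commutativity. Using $H^2(X)=\la[e^{25}],[e^{15}-e^{26}],[e^{15}-e^{34}]\ra$ one computes this subspace, finding it spanned by $[e^{235}]$ and $[e^{135}]$. The crucial point is then that $e^3\wedge e^{56}=e^{356}$ while $e^3\wedge e^{37}=0$, and $[e^3\wedge\delta]=[e^3]\cdot[\delta]\in[e^3]\cdot H^2(X)$ for every closed invariant $2$-form $\delta$; hence, modulo the indeterminacy, the representative above equals a non-zero multiple of $[e^{356}]$. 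Since $[e^{356}]$, $[e^{235}]$ and $[e^{135}]$ are linearly independent in $H^3(X)$, it follows that $0\notin\la [e^3],[e^{15}-e^{26}],[e^3]\ra$, and Corollary~\ref{cor:massey-form} yields that $X$ is not formal.

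The main obstacle is the bookkeeping in the invariant subcomplex: solving $da_{12}=\bar{a}_1 a_2$ explicitly, and — more delicately — pinning down the indeterminacy $[e^3]\cdot H^2(X)$ against the stated basis of $H^3(X)$. Once one knows that this indeterminacy is exactly $\la[e^{235}],[e^{135}]\ra$ and that wedging with $e^3$ annihilates the $e^{37}$-part of $a_{12}$ while sending its $e^{56}$-part to $e^{356}$, the non-triviality is immediate. (The choices of $a_1,a_2,a_3,a_{12},a_{23}$ only affect the representative by elements of $[e^3]\cdot H^2(X)$, so it is enough to exhibit one defining system whose class avoids $0$ modulo that subspace.)
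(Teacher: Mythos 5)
Your proposal is correct and follows essentially the same route as the paper: the same defining system built from the primitive $e^{56}$ of $\bar a_1 a_2=\pm\, e^3\wedge(e^{15}-e^{26})$, the same leading representative $2e^{356}$, and the same appeal to Corollary \ref{cor:massey-form}. The only cosmetic difference is that you package the set of values as the coset of the standard indeterminacy $[e^3]\cdot H^2(X)=\la [e^{135}],[e^{235}]\ra$, whereas the paper enumerates all defining systems explicitly through its computation of $Z^2$ and $B^3$ and writes the product as $\{[2e^{356}+e^3\wedge\b]\ \mbox{s.t.}\ d\b=0\}$ --- the same coset --- before checking that $0$ does not belong to it.
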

\begin{proof}
First of all, one can check that that space of exact $3$-forms of $((\L \frg)^{\Z_2},d)$ is:
$$
B^3((\L^* \frg^*)^{\Z_2},d)= \la e^{123},e^{135}-e^{236}, -e^{136}+ e^{235} + e^{236}, e^{127}- 2 e^{146} + 2 e^{245} + 2 e^{246}  \ra.
$$
and the space of closed $2$-forms is:
$$
Z^2( (\L^* \frg^*)^{\ZZ_2},d) = \la e^{12},-e^{16}+e^{25}+e^{26}-e^{34}, e^{25}, e^{15}-e^{26}, e^{15}- e^{34} \ra  .
$$

Let us take $\xi_1= [e^3]= \xi_3$, $\xi_2= [e^{15} - e^{26}]$; the representatives of these cohomology classes are $\a_1=\a_3=e^3$ and $\a_2= e^{15} - e^{26} + dx$ for some $x \in (\frg^*)^{\ZZ_2} $; our previous computations ensure that 
the Massey product $\la \xi_1,\xi_2,\xi_3\ra$ is well defined. More precisely, $\bar{\a}_1 \wedge \a_2 = d(-e^{56} + e^3x + \b_1)$ and $\bar{\a}_2 \wedge \a_3 = d( e^{56}-e^3x+ \b_2)$, where $\b_1$ and $\b_2$ are closed forms. Defining systems for $\la \xi_1, \xi_2, \xi_3 \ra$ are $(e^3,e^{15} - e^{26} + dx, e^3, -e^{56} + e^3x + \b_1, e^{56}-e^3x+ \b_2 )$ and the triple Massey product is
$$
\la \xi_1, \xi_2, \xi_3 \ra=\{ [2e^{356} + e^{3}\b] \mbox{ s.t. } d\b =0 \}.
$$
The zero cohomology class is not an element of this set due to our previous computations.
 Corollary \ref{cor:massey-form} ensures that $X$ is not formal.
\end{proof}

Let $\rho \colon \tilde{X} \to X$ be the closed $\Gtwo$ resolution constructed in Theorem \ref{theo:resol}.
Lifting this triple Massey product to $\widetilde X$ we prove that $\widetilde X$ is not formal.

\begin{proposition} \label{prop:resol-nf}
The resolution $\widetilde X$ is not formal.
\end{proposition}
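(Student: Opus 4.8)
The strategy is to transplant the non-trivial triple Massey product produced on $X$ in Proposition \ref{prop:X-nf} to $\widetilde X$ along the resolution map $\rho$, using that $\rho^*$ is injective on cohomology and that the ring structure of $H^*(\widetilde X)$ is the explicit one of Proposition \ref{prop:cohom-alg}. First I would record the algebraic set-up: the pullback $\rho^*\colon(\O^*(X),d)\to(\O^*(\widetilde X),d)$ is a morphism of CDGAs inducing the injective map $\rho^*\colon H^*(X)\inc H^*(\widetilde X)$ of Proposition \ref{prop:cohomology-short-sequence}, and by Proposition \ref{prop:cohom-alg},
$$
H^*(\widetilde X)=\rho^*H^*(X)\;\oplus\;\bigoplus_{i=1}^{16}H^*(L_i)\otimes\la\bx_i\ra,\qquad \deg\bx_i=2,
$$
as algebras. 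Put $\z_1=\z_3=\rho^*[e^3]$ and $\z_2=\rho^*[e^{15}-e^{26}]$. Since $\rho^*$ is multiplicative and $\xi_1\xi_2=\xi_2\xi_3=0$ in $H^*(X)$ (indeed $e^{135}-e^{236}$ is exact, see the proof of Proposition \ref{prop:X-nf}), we get $\z_1\z_2=\z_2\z_3=0$, so the Massey product $\la\z_1,\z_2,\z_3\ra\subset H^3(\widetilde X)$ is defined, with indeterminacy $\z_1\cup H^2(\widetilde X)+H^2(\widetilde X)\cup\z_3=\rho^*[e^3]\cup H^2(\widetilde X)$.

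Next I would invoke naturality of Massey products under the CDGA morphism $\rho^*$: pulling back any defining system $(a_1,a_2,a_3,a_{12},a_{23})$ for $\la[e^3],[e^{15}-e^{26}],[e^3]\ra$ on $\O^*(X)$ yields a defining system on $\O^*(\widetilde X)$ whose output is the $\rho^*$-image of the original. Hence, fixing any $v$ in $\la[e^3],[e^{15}-e^{26}],[e^3]\ra$ (this set is a coset of $[e^3]\cup H^2(X)$ which, by Proposition \ref{prop:X-nf}, does not contain $0$), we obtain $\rho^*v\in\la\z_1,\z_2,\z_3\ra$, so
$$
\la\z_1,\z_2,\z_3\ra=\rho^*v+\rho^*[e^3]\cup H^2(\widetilde X).
$$
Therefore $0\in\la\z_1,\z_2,\z_3\ra$ if and only if $\rho^*v\in\rho^*[e^3]\cup H^2(\widetilde X)$, and it remains to rule this out.

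To do so I would compute $\rho^*[e^3]\cup H^2(\widetilde X)$ with the product rules of Proposition \ref{prop:cohom-alg}. Since each $L_i$ is connected, $H^2(\widetilde X)=\rho^*H^2(X)\oplus\bigoplus_i\RR\,(1\otimes\bx_i)$; multiplicativity of $\rho^*$ gives $\rho^*[e^3]\cup\rho^*\b'=\rho^*([e^3]\cup\b')\in\rho^*H^3(X)$, while rule (2) gives $\rho^*[e^3]\cup(1\otimes\bx_i)=e_i^*[e^3]\otimes\bx_i\in H^1(L_i)\otimes\la\bx_i\ra$, where $e_i\colon L_i\to X$ is the inclusion. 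Thus $\rho^*[e^3]\cup H^2(\widetilde X)\subset\rho^*([e^3]\cup H^2(X))\oplus\bigoplus_iH^1(L_i)\otimes\la\bx_i\ra$, and its component in the summand $\rho^*H^3(X)$ is exactly $\rho^*([e^3]\cup H^2(X))$. Now $\rho^*v$ lies entirely in $\rho^*H^3(X)$, which meets $\bigoplus_iH^1(L_i)\otimes\la\bx_i\ra$ trivially; so $\rho^*v\in\rho^*[e^3]\cup H^2(\widetilde X)$ would force $\rho^*v\in\rho^*([e^3]\cup H^2(X))$, hence $v\in[e^3]\cup H^2(X)$ by injectivity of $\rho^*$, i.e.\ $0\in\la[e^3],[e^{15}-e^{26}],[e^3]\ra$ on $X$, contradicting Proposition \ref{prop:X-nf}. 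Therefore $0\notin\la\z_1,\z_2,\z_3\ra$, and Corollary \ref{cor:massey-form} shows that $\widetilde X$ is not formal.

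The main obstacle is the bookkeeping of the indeterminacy: passing from $X$ to $\widetilde X$ enlarges $H^2$ by the classes $1\otimes\bx_i$, which a priori could enlarge the indeterminacy of the Massey product enough to engulf $0$. The decomposition of Proposition \ref{prop:cohom-alg} together with multiplication rule (2)—which keeps $\rho^*[e^3]\cup(1\otimes\bx_i)$ inside the ``new'' summand $H^1(L_i)\otimes\la\bx_i\ra$, disjoint from $\rho^*H^3(X)$—is precisely what prevents this. A secondary, routine point to verify is that $\rho^*$ is a genuine morphism of CDGAs on orbifold forms, so that naturality of Massey products applies verbatim.
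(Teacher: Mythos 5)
Your proposal is essentially a cohomological repackaging of the paper's argument, and its core is correct. Where the paper builds the degree $1$ and $2$ part of the minimal model of $\widetilde X$ explicitly (generators $a,b_j,y_i,n$ with $dn=ab_2$, $\k(n)=\rho^*(e^{56})$) and evaluates the Massey product inside $\L V$, you stay at the level of $H^*(\widetilde X)$, using naturality of the triple Massey product under $\rho$ together with the product rules of Proposition \ref{prop:cohom-alg} to control the indeterminacy. Your key observation -- that rule (2) keeps $\rho^*[e^3]\cup(1\otimes \bx_i)$ inside the summand $H^1(L_i)\otimes\la \bx_i\ra$, which meets $\rho^*H^3(X)$ trivially, so that $0\in\la\z_1,\z_2,\z_3\ra$ would force $0\in\la[e^3],[e^{15}-e^{26}],[e^3]\ra$ on $X$ -- is exactly the mechanism behind the paper's step showing that $[\k(a\o)]$ and $[\k(2an+a\b)]$ are linearly independent unless $\o=0$. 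What your route buys is that you never have to exhibit minimal-model generators; what it costs is that you must justify naturality of the Massey product along $\rho$ at the cochain level.

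That justification is the one point you should not call routine, and as you state it ("$\rho^*$ is a genuine morphism of CDGAs on orbifold forms, so naturality applies verbatim") it is not correct: the pullback of an orbifold form on $X$ under the blow-down $\rho$ need not be smooth on $\widetilde X$. The paper itself records the local phenomenon: $r_0^2$ is not a smooth function on $N=\widetilde\CC^2/\ZZ_2$ and $\chi_0^*(\o_1^0)$ acquires a pole along the exceptional $\CP^1$, so $\rho^*$ does not send $\O^*(X)$ into $\O^*(\widetilde X)$; only forms of the right type (e.g.\ the $(2,0)+(0,2)$ pieces $\o_2,\o_3,\m_2,\m_3$, and horizontal forms) pull back smoothly. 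There are two standard repairs. Either verify directly that the specific defining system of Proposition \ref{prop:X-nf} consists of forms that do pull back smoothly near $L$ -- this is in effect what the paper uses when it asserts $\rho^*(e^3\wedge(e^{15}-e^{26}))=d\rho^*(e^{56})$ and sets $\k(n)=\rho^*(e^{56})$ -- or run the naturality argument for the continuous map $\rho$ in singular (or piecewise-polynomial) cochains with real coefficients, using that the orbifold de Rham complex $(\O^*(M)^{\ZZ_2},d)$ is a genuine real model of the underlying space $X$ (this is what the averaging argument and Theorem \ref{thm:action-dga} provide), so that its Massey products agree with the topological ones. With either repair inserted, your indeterminacy computation goes through and the conclusion via Corollary \ref{cor:massey-form} is complete.
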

\begin{proof}
Let $(\L V,d)$ be the minimal model of $\widetilde X$ with $V=\oplus_{i=1}^7 V^i$, and let $\k \colon \L V \to \O(\widetilde X)$ be a quasi-isomorphism.
From Proposition \ref{prop:cohom-alg} we deduce that $H^1(\widetilde X)= \la \rho^*(e^3) \ra$ and that:
$$
H^2 (\widetilde X)= \la \rho^*(e^{25}), \rho^*(e^{15}-e^{26}), \rho^*(e^{15}-e^{34}), \t_1,\dots, \t_{16} \ra.
$$
In addition, $\rho^*(e^3 \wedge (e^{15}-e^{26})) = d\rho^*(e^{56})$ and $\rho^*[e^{235}]$ and $\rho^*[e^{135}]$ are linearly independent
on $H^3(\widetilde X, \RR)$. Then, according to Proposition \ref{prop:cohom-alg} one can choose:
\begin{align*}
V^1=& \la a\ra ,\\
V^2=& \la b_1,b_2,b_3,y_1, \dots, y_{16}, n \ra.
\end{align*}
with $da=0$, $db_j=dy_j=0$ and $dn=a b_2$ and the map $\k$ is:
\begin{align*}
\k(a)=&\rho^*(e^3), & \k(b_2)=&\rho^*(e^{15}-e^{26}),& \k(n)=& \rho^*(e^{56}), \\
\k(b_1)=& \rho^*(e^{25}), & \k(b_3)=& \rho^*(e^{15}-e^{34}), & \k(y_j)=& \t_j.
\end{align*}
We now define a Massey product. Let us take $\xi_1= [a]= \xi_3$, $\xi_2= [b_2]$; the representatives  of these cohomology classes are $\a_1=\a_3=a$ and $\a_2= b_2$. Then $\bar{\a}_1\wedge \a_2 = d(-n+ \b_1 + \o_1)$ and $\bar{\a}_2 \wedge \a_3=d(n+ \b_2 + \o_2)$ with $\b_1,\b_2 \in \la b_1,b_2,b_3 \ra$ and $\o_1,\o_2 \in \la y_{1}, \dots, y_{16} \ra$. 
Therefore, defining systems of $\la \xi_1,\xi_2,\xi_3 \ra$ are $(a,b_2,a,-n + \b_1 + \o_1, n+ \b_2 + \o_2)$ and the Massey product is the set
$$
\{ [2an + a\b + a\o] \mbox{ s.t. } \b \in \la b_1,b_2,b_3 \ra, \quad \o \in \la y_{1}, \dots, y_{16} \ra \} .
$$
We now observe that $[2an + a\b + a\o]=0$ in $H^*(\L V,d)$ if and only if $\o=0$ and $[\k(2an + a\b)]=0$. 
This is because $[\k(a \o)]=[\rho^*(e^3) \wedge \k(\o)]=0$ if and only if $\o=0$, and if $[\o]\neq 0$, the elements
$[\k(a \o)]$ and $[\k (2an + a\b)]$ are linearly independent.

In addition, $\k(2an + a\b)=\rho^*(2e^{356} + e^3 \wedge \b')$, with $\b' \in \la  e^{25}, e^{15}-e^{26},  e^{15}- e^{34} \ra$.
Taking into account Proposition \ref{prop:cohom-alg} $[\k(2an + a\b]=0$ if and only if $[2e^{356} + e^3 \wedge \b']=0$ on $X$. But $[2e^{356} + e^3 \wedge \b']\neq 0$ as shown in Proposition \ref{prop:X-nf}.
\end{proof}

There is another non-trivial triple Massey product that comes from the isotropy locus. In order to describe it we
have to construct the subspace $V^3$ of our minimal model; it is a direct sum $V^3=C\oplus N$; such that $dC=0$ and there are not closed elements on $N$. To construct $C$ one takes a
 basis of the space  
$H^3(\widetilde{X}) / H^1(\widetilde X)H^2(\widetilde X)$; for instance:
\begin{align*}
\la &\rho^*[e^{346}], \rho^*[e^{124}],  \rho^*[e^{146}],  \rho^*[e^{245}], \rho^*[e^{127} + 2e^{145}], \\
&\rho^*[e^{125} + e^{167} - e^{257} - 2 e^{456} - e^{347}]\ra 
\oplus \la \{ [e^4 \otimes \bx_i ] \}_{i=1}^{16}\ra,
\end{align*}
Let $C= \la c_1, \dots, c_6, z_1, \dots z_{16} \ra$ with $dC=0$
and define $\k(c_1)=\rho^*(e^{346}),\k(c_2)=\rho^*(e^{124})$, $\dots$, $\k(c_6)=\rho^*(e^{125} + e^{167} - e^{257} - 2 e^{456} - e^{347})$ and $\k(z_i)=e^4\otimes \bx_i$.

With this notation, the triple Massey product coming from the singular locus
$$
\la [a],[z_j],-[a] \ra
$$
is not trivial.

\begin{proposition}
The fundamental group of $\widetilde X$ is $\pi_1(\widetilde X)=\Z \x \Z_2 \x \Z_6$.
\end{proposition}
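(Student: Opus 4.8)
The plan is to compute $\pi_1(\widetilde X)$ by van Kampen, using the explicit structure of the resolution given in Theorem \ref{theo:resol}.

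\textbf{Step 1: Van Kampen decomposition.} By Theorem \ref{theo:resol}, $\widetilde X$ is obtained from $X$ by deleting tubular neighbourhoods of the sixteen singular components $H_\e$ and gluing in, over each, a bundle with fibre the Eguchi--Hanson space $N=\widetilde{\CC}^2/\ZZ_2$. Write $\widetilde X=U\cup\bigcup_{\e\in\mathcal E}W_\e$, where $W_\e$ is a tubular neighbourhood of the exceptional divisor $Q|_{H_\e}$ and $U=\widetilde X\smallsetminus\rho^{-1}(L)$, so that $\rho$ identifies $U$ with $X\smallsetminus L$. Then $W_\e$ retracts onto $Q|_{H_\e}=H_\e\times\CP^1$ by Lemma \ref{lem:Q-trivial}, so $\pi_1(W_\e)=\pi_1(H_\e)$, and the $\RP^3$ linking the exceptional divisor becomes null-homotopic in $W_\e$ because $N$ is simply connected. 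Since $\nu$ is topologically trivial, $U\cap W_\e$ retracts onto $H_\e\times\RP^3$, so $\pi_1(U\cap W_\e)=\pi_1(H_\e)\times\ZZ_2$. Van Kampen then absorbs each factor $\pi_1(H_\e)$ via a section $H_\e\hookrightarrow H_\e\times\CP^1$ and leaves only the relations $f_\e=1$, where $f_\e\in\pi_1(X\smallsetminus L)$ is the image of the generator of the $\RP^3$ factor; hence
\[
\pi_1(\widetilde X)\;\cong\;\pi_1(X\smallsetminus L)\,\big/\,\langle\!\langle\, f_\e:\e\in\mathcal E\,\rangle\!\rangle .
\]

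\textbf{Step 2: The ambient group.} Since $L$ has codimension $4$ in $M$ we have $\pi_1(M\smallsetminus L)=\pi_1(M)=\Gamma$; as $\j$ acts freely on $M\smallsetminus L$ with quotient $X\smallsetminus L$ and any $f_\e$ splits the resulting extension, $\pi_1(X\smallsetminus L)\cong\Gamma\rtimes\langle\j\rangle$, with $\j$ acting through the induced automorphism $\j_*$ of $\Gamma$ (fixing $u_3,u_4,u_7$ and negating $u_1,u_2,u_5,u_6$); this group is $\pi_1^{\mathrm{orb}}(X)$. Up to conjugacy, $f_\e$ is the order-two generator of the isotropy group at a point of $H_\e$, which in $\Gamma\rtimes\langle\j\rangle$ equals $\delta_\e=\gamma_\e^{-1}\j$, where $\gamma_\e=\j(\e)\,\e^{-1}\in\Gamma$ (this element lies in $\Gamma$ precisely because $[\e]\in\Fix(\j)$).

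\textbf{Step 3: Group-theoretic reduction.} For $\e=0$ one has $\gamma_0=e$, so $\delta_0=\j$; killing it forces $\j=1$ and collapses the semidirect product, whence
\[
\pi_1(\widetilde X)\;\cong\;\Gamma\,\big/\,\langle\!\langle\,\{\,g\,\j_*(g)^{-1}:g\in\Gamma\,\}\cup\{\,\gamma_\e:\e\in\mathcal E\,\}\,\rangle\!\rangle .
\]
Using the multiplication formula of Proposition \ref{prop:gh} one computes the sixteen elements $\gamma_\e$ explicitly; the four components $\e=\tfrac12 u_k$ with $k\in\{1,2,5,6\}$ give $\gamma_\e=u_k$, so $u_1,u_2,u_5,u_6$ are killed, after which every relation $g\,\j_*(g)^{-1}$ is automatically satisfied. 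A direct computation then shows that the resulting quotient of $\Gamma$ is generated by the images of $u_3,u_4,u_7$, with $u_3$ of infinite order, $u_4$ of order $2$ (since $u_4^2$ is a commutator of $u_1$ and $u_2$ in $\Gamma$), $u_7$ of order $6$ (since $u_7^6$ is a commutator in $\Gamma$), and $u_3,u_4$ commuting; that is,
\[
\pi_1(\widetilde X)\;\cong\;\la u_3\ra\times\la u_4\mid u_4^2\ra\times\la u_7\mid u_7^{6}\ra\;\cong\;\ZZ\times\ZZ_2\times\ZZ_6 ,
\]
which is abelian with $b_1=1$, consistent with $H^1(\widetilde X)=\la\rho^*[e^3]\ra$.

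The main obstacle is Step 3: one must patiently extract the sixteen relators $\gamma_\e$ from the Baker--Campbell--Hausdorff formula of Proposition \ref{prop:gh} and determine the effect of adjoining them, together with the conjugacy relations produced by $\j=1$, to the three-step nilpotent group $\Gamma$; in particular, checking that the normal closure so obtained is no larger than $\langle\!\langle\j,u_1,u_2,u_5,u_6\rangle\!\rangle$ and that the torsion $\ZZ_2\times\ZZ_6$ forced on $u_3,u_4,u_7$ by the commutator structure of $\Gamma$ genuinely survives. Steps 1 and 2 are routine once the codimension-four argument and Lemma \ref{lem:Q-trivial} are in hand.
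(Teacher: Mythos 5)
Your argument is correct in outline and arrives at the right group, but by a genuinely different route than the paper. The paper proceeds in two stages: it first computes $\pi_1(X)$ directly, using that $\pi_1(M)\cong\G$ surjects onto $\pi_1(X)$ (quotient by an action with fixed points, citing \cite{Bre}), that the loops $f_1,f_2,f_5,f_6$ project to a path traversed and then reversed (their midpoints $\tfrac12 u_k$ lying over fixed points) and hence die, and that the commutator relations then force $f_4^2=f_7^6=1$ and commutativity of $f_3,f_4,f_7$; in a second stage it shows $\pi_1(\widetilde X)\cong\pi_1(X)$ by a Seifert--van Kampen comparison of $X$ and $\widetilde X$, using that $\rho^{-1}(U)$ is homotopy equivalent to $\bigvee_\e H_\e\x S^2$. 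You instead apply van Kampen once, directly to $\widetilde X$, obtaining $\pi_1(\widetilde X)\cong\pi_1(X-L)$ modulo the normal closure of the sixteen meridians, then identify $\pi_1(X-L)$ with the orbifold group $\G\rtimes\Z_2$ (this needs the observation, left implicit in your Step 2, that a meridian squares to the fibrewise great circle, which is null-homotopic in an $S^3$-fibre of $\nu$, so the extension splits) and the meridians with the isotropy involutions; after that the algebra coincides with the paper's, namely killing the involution together with $u_1,u_2,u_5,u_6$ (for $\e=\tfrac12 u_k$ your relator is really $(2\e)^{-1}=u_k^{-1}$ rather than $u_k$, which is immaterial for the normal closure). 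Your route avoids the appeal to \cite{Bre} and never computes $\pi_1(X)$ as an intermediate object, yielding the clean presentation $\G/\langle\langle u_1,u_2,u_5,u_6\rangle\rangle$, while the paper's route avoids the meridian/isotropy bookkeeping. One caveat applies equally to both: the relations exhibited only show that the quotient is generated by commuting classes $u_3,u_4,u_7$ with $u_4^2=u_7^6=1$, hence is a quotient of $\Z\x\Z_2\x\Z_6$; the paper asserts equality without further comment, and you flag but do not perform the check that no further collapse occurs. Since your quotient is visibly abelian, it equals $\G^{\mathrm{ab}}$ modulo the images of $u_1,u_2,u_5,u_6$, which can be computed mechanically from Proposition \ref{prop:gh}; carrying this out is the one computation still owed, and it is needed just as much to justify the paper's own concluding sentence.
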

\begin{proof}
Let us denote $\pi \colon M \to X$ the quotient projection.
In order to compute $\pi_1(X)$ we first observe that $\pi_1(M)$ is isomorphic to $\Gamma$ due to the exact sequence
$
0 \to \pi_1(G) \to \pi_1(M) \to \Gamma \to 0.
$
Of course, each generator $u_i \in \Gamma$ is identified with the homotopy class $f_i$
determined by the image of the path
from $0$ to $u_i$ under the quotient map $q \colon G \to M$.
Denote by $[\cdot, \cdot]$ the commutator of two elements on $\pi_1(M)$; then
the product structure on $\G$ determines that the non-zero commutators are:
\begin{align*}
[f_1,f_2]=& f_4^{-2}, &  [f_1,f_2]=& f_5^{-2},  & [f_2,f_5]=&  f_7^{-6}, &[f_3,f_4]=& f_7^{6}. \\
[f_1,f_3]=&  f_6^2, &   [f_1,f_6]=& f_7^{6}, & [f_2,f_6]=& f_7^{-6}, & & 
\end{align*}
Taking into account \cite[Corollary 6.3]{Bre} the map
$\pi_* \colon \pi_1(M) \to \pi_1(X)$ is surjective; we now analyze $\pi_*(f_j)$.
First of all, under the projection $\pi$ the image of the loop $f_1$ is the same as the path from
$0$ to $\frac{1}{2} x_1$ followed by the same path in the reverse direction; this is of course contractible and thus $\pi_*(f_1)=0$; in the same manner $\pi_*(f_2)=\pi_*(f_5)=\pi_*(f_6)=0$.
Taking into account commutator relations this implies that $\pi_*(f_4^2)=0$, $\pi_*(f_7^6)=0$
and that $\pi_*(f_3)$, $\pi_*(f_4)$, $\pi_*(f_7)$ commute.
Thus, $\pi_1(X)=\ZZ \x \Z_2 \x \Z_6$.

We now prove that the resolution process does not alter the fundamental group. For each $\e \in \mathcal{E}$ consider a small tubular neighbourhood $B^\e$ of $H_\e$ and suppose additionally that $B^\e$ are pairwise disjoint. Take $D^\e \subset B^\e$ a smaller tubular neighbourhood of $H_\e$
Define $U$ a connected open set containing $\cup_\e B^\e$ that is homotopy equivalent to $\bigvee_\e H_\e$ and $V=X-\cup_\e D^\e$.

Seifert-Van Kampen theorem states that $\pi_1(X)$ is the amalgameted product of $\pi_1(V)$ and $\pi_1(U)$ via $\pi_1(U \cap V)$.  
Define $\widetilde U=\rho^{-1}(U)$, $\widetilde V=\rho^{-1}(V)$; note that $\widetilde V$ and $V$ are diffeomorphic via $\rho$; in addition, $\rho_* \colon \pi_1(\widetilde U) \to \pi_1(U)$ is an isomorphism because $\widetilde U$ is homotopy equivallent to $\bigvee_\e H_\e\x S^2$. This observation and a further application of Seifert-Van Kampen theorem ensures that $\pi_1(\widetilde{X})=\pi_1(X)$.
\end{proof}

\begin{proposition} \label{prop: no-torsion-free}
The manifold $\widetilde X$ does not admit  torsion-free $\Gtwo$ structures.
\end{proposition}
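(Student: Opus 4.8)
The plan is to argue by contradiction, playing the non-formality of $\widetilde X$ established in Proposition \ref{prop:resol-nf} against the fact that $\pi_1(\widetilde X)=\ZZ \x \ZZ_2 \x \ZZ_6$ is infinite. Suppose $\widetilde X$ admitted a torsion-free $\Gtwo$ structure $(\vp',g')$. Then $g'$ is Ricci-flat and $\hol(g')\subseteq\Gtwo$. Since $\pi_1(\widetilde X)$ is infinite the universal cover $\widehat X$ is non-compact, complete and Ricci-flat, so by the Cheeger--Gromoll splitting theorem it is isometric to a Riemannian product $\RR^{k}\x Z$ with $k\ge 1$ and $Z$ compact, simply connected, Ricci-flat and carrying no Euclidean de Rham factor. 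The holonomy $\hol(g')$ then fixes the $\RR^{k}$-factor pointwise, hence fixes a nonzero vector of $\RR^7$; as the stabilizer of such a vector in $\Gtwo$ is $\SU(3)$, we get $\hol(Z)\subseteq\SU(3)$, so $Z$ is a Ricci-flat K\"ahler manifold (a Calabi--Yau threefold, a $K3$ surface, or a point when the metric is flat).

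Next I would pass to a finite cover of $\widetilde X$ which is a genuine Riemannian product. A compact Ricci-flat manifold with no Euclidean de Rham factor has no nonzero Killing field: by the Bochner formula such a field is parallel, and a nonzero parallel field would split off an $\RR$-factor. Hence $\mathrm{Isom}(Z)$ is finite and $\mathrm{Isom}(\RR^{k}\x Z)=\mathrm{Isom}(\RR^{k})\x\mathrm{Isom}(Z)$. Projecting the deck group $\pi_1(\widetilde X)$ to $\mathrm{Isom}(\RR^{k})$ and invoking the Bieberbach theorems --- this is precisely the structure theory for compact manifolds with torsion-free $\Gtwo$ structure and infinite fundamental group, cf.\ \cite{Joyce2} --- one produces a finite-index \emph{normal} subgroup $N\trianglelefteq\pi_1(\widetilde X)$ such that the corresponding cover $\widehat X_N=\widehat X/N$ is diffeomorphic (indeed isometric) to $T^{k}\x Z$.

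Finally, $T^{k}$ is formal, $Z$ is formal by \cite{DGMS} since it is K\"ahler, and a product of formal spaces is formal, so $\widehat X_N$ is formal. As $N$ is normal of finite index, $\widetilde X=\widehat X_N/G$ with $G=\pi_1(\widetilde X)/N$ a finite group acting freely by diffeomorphisms on the compact connected manifold $\widehat X_N$, so Lemma \ref{lem:formal-quotient} shows that $\widetilde X$ is formal. This contradicts Proposition \ref{prop:resol-nf}, and therefore $\widetilde X$ admits no torsion-free $\Gtwo$ structure.

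The step I expect to be the main obstacle is the production of the \emph{product} finite cover $T^{k}\x Z$: one must control both the rotational part of the crystallographic group inside $\mathrm{Isom}(\RR^{k})$ and the (finite) twisting by $\mathrm{Isom}(Z)$, and check that after passing to a suitable finite normal subcover no screw-motion monodromy survives. This is exactly where the finiteness of $\mathrm{Isom}(Z)$ and the Bieberbach theorems are used, and it is the ingredient to be imported from the general structure theory of compact $\Gtwo$-manifolds; the remainder is then a short deduction from Lemma \ref{lem:formal-quotient}. A minor point to record carefully is that the cover coming out of that structure theory can be taken normal over $\widetilde X$ (finite covers of $T^{k}\x Z$ are again of the form $T^{k}\x Z$, since $Z$ is simply connected), so that Lemma \ref{lem:formal-quotient} genuinely applies.
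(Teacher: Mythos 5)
Your argument is correct, and its skeleton --- produce a formal finite \emph{regular} cover of $\widetilde X$ and then contradict Proposition \ref{prop:resol-nf} via Lemma \ref{lem:formal-quotient} --- is exactly the paper's. The difference lies in how the formal cover is obtained. The paper's route is shorter: since the metric of a torsion-free $\Gtwo$ structure is Ricci-flat and $b_1(\widetilde X)=1$, Bochner's theorem \cite{Bo} yields a finite covering $N\times S^1\to\widetilde X$ with $N$ compact, simply connected and $6$-dimensional; $N$ is formal because compact simply connected manifolds of dimension at most $6$ are formal \cite{FM}, the product with $S^1$ is formal, and the covering is automatically regular because $\pi_1(\widetilde X)=\ZZ\times\ZZ_2\times\ZZ_6$ is abelian. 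You instead use only that $\pi_1$ is infinite, and import the Cheeger--Gromoll splitting of the universal cover, the holonomy reduction inside $\Gtwo$ to make the compact factor $Z$ K\"ahler, formality of K\"ahler manifolds \cite{DGMS}, and the Fischer--Wolf/Bieberbach structure theory of compact Ricci-flat manifolds to get the finite normal product cover $T^k\times Z$. This works, but two of your heavier ingredients are avoidable: (i) the K\"ahler detour (which also quietly needs the observation that a subgroup of $\Gtwo$ fixing two orthonormal vectors fixes their cross product, so the flat rank $k$ is $1$, $3$ or $7$ and the intermediate cases you list do not really occur) can be bypassed, since $Z$ is compact, simply connected and of dimension $7-k\le 6$, hence formal by the very result the paper cites; and (ii) the normality issue you single out as the main obstacle is immediate here, because $\pi_1(\widetilde X)$ is abelian, so every covering is regular. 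In exchange, your proof is slightly more general in spirit (it uses infiniteness of $\pi_1$ rather than the specific value $b_1=1$), while the paper's argument is the more economical one for this particular manifold.
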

\begin{proof}
Suppose that $\widetilde X$ admits a torsion-free $\Gtwo$ structure. 
Since $g$ is Ricci flat and $b_1=1$, 
\cite{Bo} ensures that there is a finite 
covering $N\x S^1 \to \widetilde X$; with $N$ a compact simply connected $6$-dimensional manifold. Note that the covering is regular because $\pi_1(\widetilde X)$ is abelian; thus $(N \x S^1) / H =\widetilde X$, where $H$ denotes the Deck group of the covering.

The manifold $N$ is formal because it is simply-connected and $6$-dimensional (see \cite[Theorem 3.2]{FM}  ); therefore $N \x S^1$ is formal (see \cite[Lemma 2.11]{FM}). Lemma \ref{lem:formal-quotient} allows us to conclude that $(N \x S^1) / H =\widetilde X$ is formal; yielding a contradiction.
\end{proof}

\begin{remark}
We can also prove Proposition \ref{prop: no-torsion-free} by making use of the topological obstruction of torsion-free $\Gtwo$ structures obtained in \cite{CKT}. Suppose that $\widetilde X$ has a torsion-free $\Gtwo$ structure, then \cite[Theorem 4.10]{CKT} guarantees the existence of CDGAs $(A,d)$ and $(B,d)$ with the differential $d\colon B^k \to B^{k+1}$ being zero except for $k=3$, and
 quasi-isomorphisms:
$$
\xymatrix{
(\O(\widetilde X),d) & \ar[l] (A,d) \ar[r] & (B,d).
}
$$
This implies \cite[Corollary 4.13]{CKT} that
non-zero triple Massey products $\la \xi_1, \xi_2,\xi_3 \ra$ on $(\O(\widetilde X),d)$ verify that $|\xi_1|+ |\xi_2|=4$ and $|\xi_2|+ |\xi_3|=4$. Let $(A',d)$ be the minimal model of $(A,d)$, then one can obtain quasi-isomorphisms: $$
\xymatrix{
(\L V,d) & \ar[l] (A',d) \ar[r] & (B,d).
}
$$
The same conclusion holds for non-zero Massey products on $(\L V, d)$. This contradicts the fact that there is a non-zero Massey product $\la \xi_1, \xi_2,\xi_3 \ra$ on $(\L V,d)$ with $|\xi_1|=|\xi_3|=1$ and $|\xi_2|=2$ as it is obtained in the proof of Proposition \ref{prop:resol-nf}. Therefore $\widetilde X$ does not have a torsion-free $\Gtwo$ structure.
\end{remark}

\begin{remark}
There exists a finite covering $Y \to \widetilde X$ such that $\pi_1(Y)=\ZZ$ because
$\pi_1(\widetilde X)=\ZZ \x \ZZ_2 \x \ZZ_6$.
The manifold $Y$ is also non-formal as a consequence of Lemma \ref{lem:formal-quotient} and of course, it has first Betti number $b_1=1$ and admits a closed $\Gtwo$ structure.
Argueing as in the proof of Proposition \ref{prop: no-torsion-free} one can conclude that $Y$ does not admit any torsion-free $\Gtwo$ structure.
\end{remark}

{\small
 }
\end{document}